\numberwithin{equation}{section}
\DeclareUrlCommand\ULurl@@{%
  \def\UrlLeft{\uline\bgroup}%
  \def\UrlRight{\egroup}}
\def\ULurl@#1{\hyper@linkurl{\ULurl@@{#1}}{#1}}
\DeclareRobustCommand*\ULurl{\hyper@normalise\ULurl@}
\definecolor{darkred}{rgb}{0.7,0.1,0.1}
\newcommand{\red}{\color{darkred}}
\definecolor{darkgreen}{rgb}{0.1,0.7,0.1}
\let\oldtocsection=\tocsection
\let\oldtocsubsection=\tocsubsection
\let\oldtocsubsubsection=\tocsubsubsection
\renewcommand{\tocsection}[2]{\hspace{0em}\oldtocsection{#1}{#2}}
\renewcommand{\tocsubsection}[2]{\hspace{1em}\oldtocsubsection{#1}{#2}}
\renewcommand{\tocsubsubsection}[2]{\hspace{2em}\oldtocsubsubsection{#1}{#2}}
\DeclareRobustCommand{\SkipTocEntry}[5]{}
\newcommand{\bbE}{{\ensuremath{\mathbb E}} }
\newcommand{\bbF}{{\ensuremath{\mathbb F}} }
\newcommand{\bbL}{{\ensuremath{\mathbb L}} }
\newcommand{\bbN}{{\ensuremath{\mathbb N}} }
\newcommand{\bbP}{{\ensuremath{\mathbb P}} }
\newcommand{\bbR}{{\ensuremath{\mathbb R}} }
\newcommand{\bbZ}{{\ensuremath{\mathbb Z}} }
\renewcommand{\epsilon}{\varepsilon}
\newcommand{\fg}{{\ensuremath{\mathbf{g}}} }
\newcommand{\fh}{{\ensuremath{\mathbf{h}}} }
\newcommand{\fB}{{\ensuremath{\mathbf{B}}} }
\newcommand{\fA}{{\ensuremath{\mathbf{A}} }}
\newcommand{\bb}{{\ensuremath{\mathbf{b}} }}
\newcommand{\ga}{\alpha}
\newcommand{\gga}{\gamma}            
\newcommand{\gep}{\varepsilon}       
\newcommand{\gk}{\kappa}
\newcommand{\go}{\omega}
\newcommand{\gO}{\Omega}
\newcommand{\gl}{\lambda}
\newcommand{\cG}{{\ensuremath{\mathcal G}} }
\newcommand{\cA}{{\ensuremath{\mathcal A}} }
\newcommand{\cF}{{\ensuremath{\mathcal F}} }
\newcommand{\cE}{{\ensuremath{\mathcal E}} }
\newcommand{\cH}{{\ensuremath{\mathcal H}} }
\newcommand{\cC}{{\ensuremath{\mathcal C}} }
\newcommand{\cL}{{\ensuremath{\mathcal L}} }
\newcommand{\cT}{{\ensuremath{\mathcal T}} }
\newcommand{\cD}{{\ensuremath{\mathcal D}} }
\newcommand{\bF}{{\ensuremath{\mathbf F}} }
\newcommand{\bP}{{\ensuremath{\mathbf P}} }
\newcommand{\bE}{{\ensuremath{\mathbf E}} }
\newcommand{\ind}{\mathbf{1}}
\newcommand{\fL}{{\ensuremath{\mathfrak L}} }
\newcommand{\lint}{\llbracket}
\newcommand{\rint}{\rrbracket}
\newtheorem{theorem}{Theorem}[section]
\newtheorem{lemma}[theorem]{Lemma}
\newtheorem{proposition}[theorem]{Proposition}
\newtheorem{rem}[theorem]{Remark}
\newtheorem{assumption}[theorem]{Assumption}
\newtheorem{theorema}{Theorem}
\newcommand{\Tau}{\mathcal{T}}
\newcommand{\RN}[1]{%
  \textup{\uppercase\expandafter{\romannumeral#1}}%
}
\newcommand{\Var}{\mathrm{Var}}
\newcommand{\TV}{\mathrm{TV}}
\newcommand{\Gap}{\mathrm{gap}}
\newcommand{\dd}{\mathrm{d}}
\newcommand{\Mix}{\mathrm{mix}}
\newcommand{\tf}{\textsc{f}}
\renewcommand{\tilde}{\widetilde}
\renewcommand{\hat}{\widehat}
\def\captionfont@{\footnotesize}
\def\captionheadfont@{\scshape}
\long\def\@makecaption#1#2{%
  \vspace{2mm}
  \setbox\@tempboxa\vbox{\color@setgroup
    \advance\hsize-6pc\noindent
    \captionfont@\captionheadfont@#1\@xp\@ifnotempty\@xp
        {\@cdr#2\@nil}{.\captionfont@\upshape\enspace#2}%
    \unskip\kern-6pc\par
    \global\setbox\@ne\lastbox\color@endgroup}%
  \ifhbox\@ne 
    \setbox\@ne\hbox{\unhbox\@ne\unskip\unskip\unpenalty\unkern}%
  \fi
  \ifdim\wd\@tempboxa=\z@ 
    \setbox\@ne\hbox to\columnwidth{\hss\kern-6pc\box\@ne\hss}%
  \else 
    \setbox\@ne\vbox{\unvbox\@tempboxa\parskip\z@skip
        \noindent\unhbox\@ne\advance\hsize-6pc\par}%
\fi
  \ifnum\@tempcnta<64 
    \addvspace\abovecaptionskip
    \moveright 3pc\box\@ne
  \else 
    \moveright 3pc\box\@ne
    \nobreak
    \vskip\belowcaptionskip
  \fi
\relax
}
\newcommand{\gap}{\mathrm{gap}}
\def\writefig#1 #2 #3 {\rlap{\kern #1 truecm
\raise #2 truecm \hbox{#3}}}
\title[Cutoff  of  SEP  with   inhomogeneous conductances]{Cutoff of the  simple exclusion process   with  inhomogeneous conductances}
\author[Shangjie Yang]{Shangjie Yang} \address{Shangjie Yang \hfill\break
 Instituto de Matemática e Estatística da Universidade de São Paulo \hfill\break
  Rua do Matão 1010,  CEP 05508-090, São Paulo, Brasil.}\email{shjyang@ime.usp.br}
\keywords{Markov chains,   simple exclusion process, inhomogeneous conductances,     mixing time,  cutoff.\hspace{0.5cm} \textit{AMS subject classification}: 60K37; 60J27}
\begin{document}

\maketitle

\begin{abstract} In this paper, we study the  mixing time of  the simple exclusion process  with $k$ particles in the line segment $\lint 1, N \rint$ with conductances $c^{(N)}(x, x+1)_{1\le x<N}$ where $c^{(N)}(x, x+1)>0$ is the rate of swapping the contents of the two sites $x$ and $ x+1$. 
Writing $r^{(N)}(x,  x+1) := 1/c^{(N)}(x,  x+1)$, under the assumption
\begin{equation*}\label{eq32}
 \limsup_{N\to \infty}\, \frac{1}{N}\sup_{1< m \le N}\, \left|  \sum_{x=2}^m r^{(N)}(x-1, x)- (m-1) \right|\;=\;0\,,
\end{equation*} 
and some further assumptions on  $r^{(N)}(x, x+1)_{x \in \mathbb{N} }$ and $k$,  we prove that around time   $(1+o(1)) (2 \pi^2)^{-1} N^2 \log k$, the total variation distance to equilibrium of  the simple exclusion process drops abruptly from $1$ to $0$.
\end{abstract}

\tableofcontents

\section{Introduction}
\subsection{Overview}
The simple exclusion process is one of  simplest interacting particle systems from the perspective of probability and statistical mechanics, which 
 reasonably describes the dynamics of a low density gas.  We refer to \cite[Chapter VIII.6]{liggett2012interacting} for more  introductory and historical  information. 
Its dynamics has been studied from  different viewpoints such as 
  Hydrodynamic limits  \cite{LandimHydrodynamicsbk, Rost81, KOV89, rez91}, 
  Spectral gap \cite{DSC93, Quastel92},  
  $\log$-Sobolev inequalites \cite{Yau97},  Mixing Time \cite{benjamini2005mixing, LPWMCMT,  Morris06, labbe2016cutoff, labbe2018mixing} and so on, where the list of references is far from being exhaustive.

\medskip

All the works mentioned above are about the simple exclusion process in  a \textit{homogeneous} environment. However, when there is impurity in the environment, the pattern of relaxation can change drastically, for which we point to 
\cite{FGS16, FN17} and references therein. Only quite recently, the interest in the \textit{disordered} setup has been flourishing for interacting particle systems (cf.\ \cite{faggionato2008random, FRS19,Lacoin2021aseprandom,  schmid2019mixing}), 
where the jump
rates of particles vary spatially and are random. 
In the sequel, we are mainly concerned with \textit{Mixing time}: given a finite state space continuous-time irreducible Markov chain, how long does it take to reach equilibrium starting from any initial state in total variation distance?
Our goal is to understand how the \textit{disordered} setup affects mixing time,  comparing with the  \textit{homogeneous} setup.

\subsection{The simple exclusion process in random conductance}
Given a finite connected graph $G=(V, E)$ with $V$ being the vertice set and $E$ being the edge set, we place a Poisson clock with rate $c(e)>0$ at each edge $e \in E$, and at each time a Poisson clock rings at an edge $e$, we swap the contents of the two vertices of the edge $e$. Initially, if at each site we place a  particle and all the particles receive a different label, say, from  $1$ to $\vert V\vert$, this model is referred to as the interchange process. If we view particles labeled from $1$ to $k$ as indistinguishable particles and view particles labeled from $k+1$ to $\vert V \vert$ as empty sites, this process is called the simple exclusion process. In particular, when $k=1$, it is the random conductance model. Aldous' spectral gap conjecture states that the spectral gaps of the interchange process and of the random conductance model are the same, confirmed by Caputo, Ligget and Richthammer   \cite{caputo2010aldous}, and thus the spectral gaps of the three processes are the same. Concerning the mixing time of the simple exclusion process,  Oliveira \cite{oliveira2013mixing} shows that it is bounded above by that of the random conductance model up to a $\log \vert V\vert$ factor. 

From now on, our discussion focuses  on the underlying graph being a line segment with $N$ sites (see Figure \ref{fig:SEPrandcond} for  graphical explanation). When the conductances are \textit{homogeneously} equal to one,  in \cite{wilson2004mixing} Wilson uses harmonic analysis to provide a lower bound on the mixing time as $\frac{1+o(1)}{2\pi^2} N^2 \log k$ and an upper bound as $\frac{1+o(1)}{\pi^2} N^2 \log k$,  and conjectures that the lower bound is sharp, which is confirmed by Lacoin \cite{lacoin2016mixing} basing on the eigenfunctions of the Laplace operator and  censoring inequality established by Peres and Winkler  \cite{peres2013can}. When we are in the \textit{disordered} setup: taking $(c(x, x+1))_{1\le x<N }$ IID with $\bbE[1/c(x, x+1)]=1$ and assuming some furthermore assumption on $(c(x, x+1))_{1\le x<N }$ and $k$, we exploit the information about the eigenvalues and eigenfunctions proved in \cite{Yang2024SpectralLight} to show that the $\gep-$mixing time is still $\frac{1+o(1)}{2\pi^2} N^2 \log k$, which implies cutoff. A next natural step is to ask: if we replace a line segment by a circle, can we extend the results in \cite{Lacoin2016profile, Lacoin2017diffusivecutoff} for the \textit{disordered} setup?

\subsection{A short review of related models and results}
\subsubsection{Random walk in random environment}\label{subsec:RWRE}
When the disorder is IID in the sites of the lattice $\bbZ$ and there is only one particle, this is the classical frame work of random walk in random environment where the particle at site $x$ jumps to site $x+1$ at rate $\go_x \in (0, 1)$ and jumps to site $x-1$ at rate $1-\go_x$, for which we refer to  \cite{sznitman2004topics, zeitouni2004part} for references. Concerning the  asymptotic behaviour of the random walk,  Solomon \cite{solomon1975random} showed that if $\bbE \left[   \log \frac{1-\go_x}{\go_x}\right] \neq 0$, the random walk is transient; while if $\bbE \left[   \log \frac{1-\go_x}{\go_x}\right] = 0$, 
the random walk is recurrent. Furthermore, in such a setting, it is also possible that the random walk is transient with sublinear speed (cf.\ \cite{kesten1975limit}), which is impossible for the random walk in \textit{homogeneous} environment.  Moreover, concerning the time required for the random walk restricted in a finite line segment of length $N$ to reach equilibrium, in the transient environment case
Gantert and Kochler  \cite{gantert2012cutoff} showed that it is related to the depth of the deepest trap (see \cite[Eq.\ (58)]{Lacoin2021aseprandom} for the definition of trap) and may be much larger than $N^2 \log N$.

\subsubsection{The simple exclusion process with site  randomness}
In the same setting as Subsection \ref{subsec:RWRE}, we place $k$ particles performing independently continuous-time random walk restricted to a line segment  of length $N$ with  the  \textit{exclusion rule}: any jump to an occupied site is canceled.
We refer to Figure \ref{fig:asep} for a graphical explanation. 

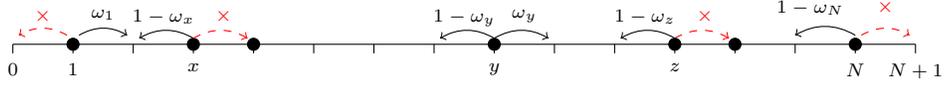
\begin{figure}[h]
 \centering
   \begin{tikzpicture}[scale=.4,font=\tiny]
     \draw (23,-1) -- (53,-1);
     
     \foreach \x in {23,25, 27,...,51,53} {\draw (\x,-1.3) -- (\x,-1);}
     \draw[fill] (51,-1) circle [radius=0.2];   
     \node[below] at (25,-1.3) {$1$};
     \node[below] at (51,-1.3) {$N$};
     \node[below] at (23,-1.3) {$0$};
     \node[below] at (53,-1.3) {$N+1$};

       \draw  (24.8, -0.7)  edge[bend right,->, dashed,red](23.2, -0.7);
       \node[below, red] at (24, 0.5) {$\times$};
       \draw  (25.2, -0.7)  edge[bend left,->](26.8, -0.7);
       \node[below] at (26, 0.5) {$\go_1$};

       \draw[fill] (25,-1) circle [radius=0.2];  
       \node[below] at (29, -1.3) {$x$};
       \node[below] at (28, 0.5) {$1-\go_x$};
       \node[below,red] at (30, 0.5) {$\times$};    

       \draw (51.2, -0.7) edge[bend left,->, dashed,red] (52.8, -0.7);
       \draw (51, -0.7) edge[bend right,->] (49, -0.7);
       \node[below] at (49.5,0.8) {$1-\go_{N}$};
       \node[below,red] at (52,.8) {$\times$};

       \draw[fill] (29,-1) circle [radius=0.2];
   
       \draw (29, -0.8) edge[bend right,->] (27.2, -0.8);
       \draw (29, -0.8) edge[bend left,dashed,->, red] (30.8, -0.8);
       \draw[fill] (31,-1) circle [radius=0.2]; 
       
       \draw (39, -0.8) edge[bend left,->] (40.8, -0.8);   
       \draw (39, -0.8) edge[bend right,->] (37.2, -0.8);      
       \node[below] at (39,-1.3) {$y$};
       \node[below] at (40,0.5) {$\go_y$};
       \node[below] at (38,0.5) {$1-\go_y$};

       \draw[fill] (39,-1) circle [radius=0.2];
         
       \draw (45, -0.8) edge[bend right, ->] (43.2, -0.8);  
       \draw (45, -0.8) edge[bend left,->,dashed,red] (46.8, -0.8);      
       \node[below, red] at (46,0.5) {$\times$};
       \node[below] at (44,0.5) {$1-\go_z$};
       \node[below] at (45,-1.3) {$z$};
   
       \draw[fill] (45,-1) circle [radius=0.2];   
       \draw[fill] (47,-1) circle [radius=0.2];       
 \end{tikzpicture}
   \caption{ A graphical representation of the simple exclusion process in the segment $\lint 1, N \rint$ and environment $\go=(\go_x)_{x \in \bbZ}$: a bold circle represents a particle, and the number above every arrow represents the jump rate while a red $"\times"$ represents a nonadmissible jump.} \label{fig:asep} 
 \end{figure}

 In the transient environment case,  concerning the relaxation pattern there exists a new mechanism peculiar  to this system: particle flow limitation (cf.\ \cite{schmid2019mixing, Lacoin2021aseprandom})  where the particle flow   corresponds to the
inverse of the time that a particle needs to cross half of the deepest trap, for which we refer to \cite[Figure 7]{Lacoin2021aseprandom} for an intuitive argument.  For the recurrent random environment case, we refer to \cite[Section 2.4]{Lacoin2021aseprandom} for  comment and conjecture.

\section{Model and result}\label{sec:1}
\subsection{Models}
\noindent Let $\left(c^{(N)}(i,i+1)\right)_{1 \le i<N}$ be a sequence of strictly positive numbers. For $x<y$ real numbers, we define  
$\lint x,y\rint:=[x,y]\cap \bbZ$.
  We consider
the simple exclusion process with $k$ particles on the segment  $\lint 1, N\rint$ 
with its state space
\begin{equation*}
\gO_{N,k}\;\colonequals\; \left\{ \xi: \lint 1, N \rint \to \{0,1\} \  \bigg| \ \sum_{i=1}^N \xi(i)=k\right\}\,,
\end{equation*}
and with its generator defined by ($f: \gO_{N,k} \mapsto \bbR$)
\begin{equation}\label{generator:SEP}
\left(\cL_{N,k}f\right) (\xi) \; \colonequals \; \sum_{i=1}^{N-1}c^{(N)}(i,i+1) \left[ f(\xi\circ \tau_{i,i+1})-  f(\xi)\right]\,,
\end{equation}
where $\tau_{i,j}$ is the transposition of the two elements $i$ and $j$. Here $1$s represent particles and $0$s represent empty sites.  We refer to Figure \ref{fig:SEPrandcond} for a graphical explanation. 
Due to the symmetry between particles and empty sites, in the paper we always assume the number of particles satisfying $k_N \le N/2$.
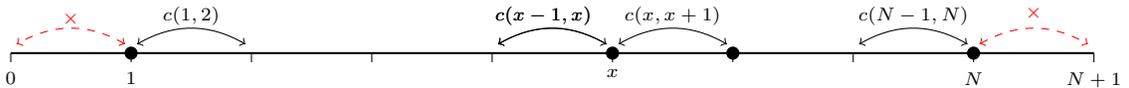
\begin{figure}[H]
 \centering
   \begin{tikzpicture}[scale=.4,font=\tiny]
     \draw[thick] (21,-1) -- (57,-1);
     \foreach \x in {21, 25,...,57} {\draw (\x,-1.3) -- (\x,-1);}
     \node[below] at (25,-1.3) {$1$};
     \node[below] at (53,-1.3) {$N$};
     \node[below] at (21,-1.3) {$0$};
     \node[below] at (57,-1.3) {$N+1$};
       \draw  (24.8, -0.7)  edge[bend right,<->, dashed,red](21.2, -0.7);
       \node[below, red] at (23, 0.7) {$\times$};
       \draw  (25.2, -0.7)  edge[bend left,<->](28.8, -0.7);
       \node[below] at (27, 0.9) {$c(1,2)$};
       \draw[fill] (25,-1) circle [radius=0.2];
       \draw  (40.8, -0.7)  edge[bend right,<->](37.2, -0.7);
       \node[below] at (38.7, 0.9) {$c(x-1,x)$};
       \draw[fill] (41,-1) circle [radius=0.2];
       \node[below] at (41, -1.2) {$x$};
       \draw[fill] (45,-1) circle [radius=0.2];
    \draw  (44.8, -0.7)  edge[bend right, <->](41.2,-0.7);
    \node[below] at (43, 0.9) {$c(x, x+1)$};
        \draw  (40.8, -0.7)  edge[bend right,->](37.2, -0.7);
       \node[below] at (38.7, 0.9) {$c(x-1,x)$};
       \draw[fill] (53,-1) circle [radius=0.2];
  \draw  (52.8, -0.7)  edge[bend right,<->](49.2,-0.7);
    \node[below] at (51, 0.9) {$c(N-1, N)$};
      \draw  (53.2, -0.7)  edge[bend left,<->, red,dashed](56.8,-0.7);
\node[below,red] at (55, 0.9) {$\times$};
 \end{tikzpicture}
 \caption{A graphical explanation for the simple exclusion process in conductances $(c^{(N)}(x, x+1))_{1 \le x<N}$:  at edge $\{x, x+1 \}$ there is a Poisson clock with rate $c^{(N)}(x, x+1)>0$ for all $1\le x<N$, and we swap the contents of the two sites $x, \, x+1$ when the  clock on the edge $\{x, x+1\}$ rings. Note that there is no Poisson clock on the edges $\{0, 1\}$ and $\{N, N+1\}$ (marked "${\red \times}$" in the figure), so that the number of particle is conservative.}\label{fig:SEPrandcond}
   \end{figure}
The uniform probability measure 
$\mu_{N,k}$
on $\gO_{N,k}$,  defined by $\mu_{N,k}(\xi)=1/ \vert \gO_{N,k} \vert$ for all $\xi \in \gO_{N,k}$,  satisfies the detailed balance condition w.r.t.\  $\cL_{N,k}$, and thus it is the unique invariant probability measure. Let 
 $(\eta_t^{\xi})_{t \ge 0}$ 
 denote the c\`{a}dl\`{a}g trajectory of the Markov chain associated with the generator $\cL_{N,k}$ and starting with the  configuration $\xi$, and let 
 $P_t^{\xi}$  denote its marginal distribution  at time instant $t$. Moreover, for $t \ge 0$,  we define the distance to equilibrium from the worst initial state to be
\begin{equation}\label{TVdist}
d_{N,k}(t) \colonequals \max_{\xi \in \gO_{N,k}} \Vert P_t^{\xi}-\mu_{N,k} \Vert_{\TV}
\end{equation}
where 
$\Vert \nu_1-\nu_2 \Vert_{\TV} \colonequals \max_{A \subset \gO_{N,k}} \left( \nu_1(A)-\nu_2(A) \right)\,$ 
    is the
total variation distance between two probability measures $\nu_1, \nu_2$ on $\gO_{N,k}$. For $\gep \in (0,1)$, we define the $\gep$-mixing time to be
\begin{equation}\label{mixtime:kpart}
t_{\Mix}^{N,k}(\gep) \colonequals \inf \left\{ t \ge 0:\,  d_{N,k}(t) \le \gep \right\}\, .
\end{equation}
Moreover, 
we say that this sequence of Markov chains exhibits a cutoff if for all $\epsilon\in (0, 1)$,
\begin{equation}\label{def:cutoff}
\lim_{N \to \infty} \frac{t_{\Mix}^{N, k}(\epsilon)}{t_{\Mix}^{N, k}(1-\epsilon)}\;=\;1\,.
\end{equation}
We refer to  the seminal paper \cite{diaconis1996cutoff} for a survey about cutoff phenomenon. 
We mention \cite{LPWMCMT,Yang2021thesis} for an introduction to the topic of mixing time.

Furthermore,
the rate of relaxation to equilibrium is characterized by the spectral gap, denoted by $ \gap_{N,k}$,  of the generator $ \cL_{N,k}$  which is
 the minimal strictly positive eigenvalue of $ -\cL_{N,k}$.
To characterize it, we define the Dirichlet form associated with the dynamic given by ($f,g: \gO_{N,k} \mapsto \bbR$) 
\begin{equation*}
\mathcal{E}_{N,k}(f) \colonequals -\langle f, \mathcal{L}_{N,k} f\rangle_{\mu_{N,k}}=\frac{1}{2}\sum_{\xi \in \gO_{N,k}}\sum_{x=1}^{N-1} \mu_{N,k}(\xi)c^{(N)}(x,x+1) \left[f(\xi \circ \tau_{x,x+1})-f(\xi) \right]^2\,,
\end{equation*}
where $\langle f, g \rangle_{{\mu_{N,k}}} \colonequals \sum_{\xi \in \Omega_N} \mu_{N,k}(\xi)f(\xi)g(\xi)$ is the usual inner product in $L^2(\gO_{N,k}, \mu_{N,k})$. Moreover, the spectral gap $\gap_{N,k}$ is given by 
\begin{equation*}\label{def:gap}
\gap_{N,k}\; \colonequals\;\,\, \inf_{f \ : \ \Var_{\mu_{N,k}}(f)>0}\,\,\frac{\mathcal{E}_{N,k}(f)}{\Var_{\mu_{N,k}}(f) }
\end{equation*}  
where $\Var_{\mu_{N,k}}(f)\colonequals \langle f, f\rangle_{\mu_{N,k}}-\langle f, \ind \rangle^2_{\mu_{N,k}}$. 
Moreover, the relation between the mixing time  and the spectral gap is
as follows
(cf.\ \cite[Theorem 3.4]{Yang2021thesis}): for all $\gep \in (0, 1)$,
\begin{equation}\label{relation:gapmixtime}
 \frac{1}{\gap_{N,k}} \log \frac{1}{2 \gep} \; \le\; t_{\Mix}^{N,k}(\gep) \;\le\; \frac{1}{\gap_{N,k}} \log \frac{1}{2 \gep \mu_{\min}}
\end{equation}
where $\mu_{\min} \colonequals \min_{\xi \in \gO_{N,k}} \mu_{N,k}(\xi)$, and the distance to equilibrium is related with the spectral gap by (cf.\ \cite[Theorem 3.4]{Yang2021thesis}):
\begin{equation}\label{speed:gap}
\lim_{t \to \infty} \frac{1}{t} \log d_{N,k}(t)\;=\;-\gap_{N,k}\,.
\end{equation}
In this paper, we are interested in the question: how does the mixing time depend on the conductance sequence $(c^{(N)}(x,x+1))_{1 \le x <N}$? In the sequel, we always set
\begin{equation*}
 r^{(N)}(x-1,x)\;\colonequals\; 1/c^{(N)}(x-1,x),\quad  \forall\, x \in \lint 1, N-1\rint\,,
\end{equation*}
 which is referred to as resistance. For simplicity of notations, we drop the superscript "$(N)$" when there is no confusion in the context. In this paper, we always assume
\begin{equation}\label{LLN}
 \limsup_{N\to \infty}\, \frac{1}{N}\sup_{2\le m  \le N}\, \left| (r^{(N)}(1,m)- (m-1) \right|\;=\;0\,.
\end{equation}

We restate the results concerning the spectral gap and principle eigenfunction in \cite{Yang2024SpectralLight} which combine Proposition 1.1 and  Theorem 1.3 of \cite{Yang2024SpectralLight} in the following. 

\begin{theorema}\label{th:gapshapeder} 
Let $g_i$ be the $i$th eigenfunction of $\cL_{N, 1}$ with $g_i(1) \colonequals 1$, whose corresponding eigenvalue is denoted by $-\gl_i$, i.e. $\cL_{N, 1} g_i= -\gl_i g_i$, where 
\begin{equation}\label{eigv:distinct}
0\;=\;\gl_0\;<\;\gl_1\;<\;\gl_2\;<\gl_3\;<\;\cdots\;<\;\gl_{N-1}\,.
\end{equation}
Then $g_1$ is strictly decreasing. 
		If the condition \eqref{LLN} on the resistances holds, for any prefixed constant $K_0$ and  all $1\le i \le K_0$
		we have 
		\begin{equation}\label{gap:asym}
		\lim_{N\to \infty} \frac{N^2 \gl_i }{\pi^2}\;=\;i^2 \, .
		\end{equation}
		Furthermore,  setting 

\begin{equation}\label{fun:h}
h_i(x) \colonequals \cos\left(\frac{i\pi(x   -1/2)}{N}\right), \;\;\; \forall \, x \in \lint 1, N \rint \, ,
\end{equation}			
concerning the shape and (weighted) derivative of the eigenfunctions,	
	 for any prefixed constant $K_0$ and  all $1\le i \le K_0$ we have
\begin{align}
		\lim_{N\to \infty}\; \sup_{x\in \lint 1,\, N\rint}\; &\left| g_i(x) - h_i(x) \right|\;=\;0\,, \label{eigfun:shape}\\	
\lim_{N \to \infty}\; \sup_{x \in \lint 1,\, N \rint}\; &\left \vert  N (c \nabla g_i)(x)- N(\nabla h_i)(x) \right\vert \; = \; 0 \,,	\label{approx:der2eigfuns}
\end{align}
where $(c \nabla f)(x)\colonequals c(x-1, x)[f(x)-f(x-1)]$.
\end{theorema}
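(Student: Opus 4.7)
The plan has three stages: strict monotonicity of $g_1$, eigenvalue asymptotics via a variational argument in resistance coordinates, and then passage to eigenfunction and weighted-derivative convergence.

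For the strict monotonicity, I would work with the discrete flux $J_i(x) \colonequals c(x-1,x)[g_i(x)-g_i(x-1)]$ for $x \in \lint 2, N\rint$, extended by $J_i(1)=J_i(N+1)=0$. The eigenvalue equation rewrites as the conservation law $J_i(x+1)-J_i(x) = -\gl_i g_i(x)$. Sturmian oscillation theory for reversible nearest-neighbour chains on a path (a direct consequence of the simple spectrum \eqref{eigv:distinct} and the tridiagonal structure) forces $g_i$ to have exactly $i$ strict sign changes on $\lint 1,N\rint$. For $i=1$, if $g_1>0$ on $\lint 1,m\rint$ and $g_1<0$ on $\lint m+1,N\rint$, then the conservation law shows $J_1$ is strictly decreasing on $\lint 1,m+1\rint$ and strictly increasing on $\lint m+1,N+1\rint$; combined with the zero boundary values this gives $J_1<0$ throughout the interior, i.e.\ strict monotone decrease of $g_1$.

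For the eigenvalue asymptotics \eqref{gap:asym}, the key move is to pass to resistance coordinates $\phi(x)\colonequals r(1,x)$, for which \eqref{LLN} gives $\sup_x|\phi(x)-(x-1)|/N \to 0$. The Dirichlet form satisfies
\begin{equation*}
\cE_{N,1}(f)\;=\;\tfrac{1}{2N}\sum_{x=1}^{N-1} c(x,x+1)\bigl[f(x+1)-f(x)\bigr]^2,
\end{equation*}
and for the trial functions $\tilde h_j(x)\colonequals \cos\bigl(j\pi \phi(x)/\phi(N)\bigr)$, the identity $c\cdot r^2 = r$ turns this into the Riemann sum $\tfrac{1}{2N}\sum r(x,x+1)\bigl[\tilde h_j'(\phi(x))+O(r(x,x+1))\bigr]^2$ in the $\phi$-variable, which converges to $(j\pi)^2/(2 N \phi(N))$; combined with $\|\tilde h_j\|_{\mu_{N,1}}^2 \to 1/2$ and $\phi(N)/N\to 1$, Courant--Fischer on the span of $\tilde h_0,\dots,\tilde h_i$ gives the upper bound $\gl_i\le (1+o(1))(i\pi/N)^2$. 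For the matching lower bound I would use the dual max--min characterization together with a compactness argument: after rescaling to $[0,1]$ via $\phi/\phi(N)$ and piecewise-constant interpolation, any $L^2$-normalized sequence with $N^2 \cE_{N,1}(f_N)$ bounded converges along a subsequence in $L^2([0,1])$ to a limit with bounded continuum Dirichlet energy, and orthogonality to $g_0,\dots,g_{i-1}$ lifts in the limit to orthogonality to $\cos(j\pi u)$, $j<i$, forcing the limiting Rayleigh quotient to be $\ge (i\pi)^2$.

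For the shape convergence \eqref{eigfun:shape}, since the limiting eigenvalues $(i\pi)^2$ are simple, the same compactness argument identifies the limit of $g_i$ (normalized by $g_i(1)=1$) with the continuum eigenfunction $\cos(i\pi u)$, which pulls back via $u\approx (x-1)/N$ to $h_i(x)$; uniform rather than $L^2$-convergence requires an equicontinuity estimate on $g_i$ obtained from $|g_i(x)-g_i(y)|^2 \le \bigl(\sum_{z=x}^{y-1} r(z,z+1)\bigr)\cdot\sum_{z=x}^{y-1} c(z,z+1)[g_i(z+1)-g_i(z)]^2$ together with $\cE_{N,1}(g_i)=\gl_i \|g_i\|^2 = O(1/N)$. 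For the weighted-derivative convergence \eqref{approx:der2eigfuns}, I would telescope the eigenvalue equation using $J_i(1)=0$:
\begin{equation*}
(c\nabla g_i)(x)\;=\;J_i(x)\;=\;-\gl_i\sum_{y=1}^{x-1} g_i(y),
\end{equation*}
and compare with the analogous identity $N(\nabla h_i)(x)=-(i\pi)^2 \cdot N^{-1}\sum_{y=1}^{x-1} h_i(y) + O(N^{-1})$; multiplying by $N$ turns both into Riemann sums, and \eqref{gap:asym} together with the uniform convergence \eqref{eigfun:shape} already established yields \eqref{approx:der2eigfuns}.

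The main obstacle is the lower bound step and the associated equicontinuity. The upper bound is a clean trial-function computation, but for the lower bound one must rule out that low-energy near-eigenfunctions concentrate on thin regions of anomalously small resistance — this is precisely where \eqref{LLN} is used in its uniform-in-$m$ form, and it is also what powers the $\ell^\infty$-control needed to convert $L^2$-convergence into the uniform statements \eqref{eigfun:shape}--\eqref{approx:der2eigfuns}.
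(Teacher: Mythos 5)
You should first be aware that this paper does not prove Theorem~\ref{th:gapshapeder} at all: it is imported verbatim from the companion work \cite{Yang2024SpectralLight} (``combine Proposition 1.1 and Theorem 1.3 of \cite{Yang2024SpectralLight}''), so there is no in-paper proof to match against. The method of that reference is, however, visible in Appendix~\ref{appsec:heighfun}, where the author runs the same machinery for the closely related operator $c(x,x+1)\Delta_x$: one passes to the Riccati variable $\bb(\gk,x)=-[g(x)-g(x-1)]/g(x-1)$, derives the recursion $\bb(\gk,x+1)=\bb(\gk,x)/(1-\bb(\gk,x))+\gk\, r(x,x+1)$, lifts it to a continuous, strictly increasing Pr\"ufer angle $\theta(\gk,\cdot)$, and locates the $i$th eigenvalue by the angle hitting a prescribed value at $x=N$; the asymptotics \eqref{gap:asym}--\eqref{approx:der2eigfuns} then come from comparing this random recursion with its deterministic ($r\equiv 1$) counterpart using \eqref{LLN}. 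Your route is genuinely different: Sturm oscillation for the monotonicity, a variational upper bound with trial functions in the resistance coordinate, and a compactness/$\Gamma$-convergence lower bound, followed by the telescoped flux identity $(c\nabla g_i)(x)=-\gl_i\sum_{y<x}g_i(y)$ for \eqref{approx:der2eigfuns}. The transfer-matrix approach buys individual, quantitative control of every low-lying eigenvalue and pointwise control of the eigenfunction in one pass (which is exactly the form of control the paper consumes); your approach is more standard and more robust to the precise one-dimensional structure, and your derivation of \eqref{approx:der2eigfuns} from \eqref{gap:asym} and \eqref{eigfun:shape} via the flux identity is clean and correct.

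Two points need attention before this becomes a proof. First, a small but real slip: for the one-particle chain the Dirichlet form is $\cE_{N,1}(f)=\frac{1}{N}\sum_{x=1}^{N-1}c(x,x+1)[f(x+1)-f(x)]^2$, not $\frac{1}{2N}\sum(\cdots)$; with your prefactor the trial-function computation would return $\gl_i\approx i^2\pi^2/(2N^2)$, half the correct value, so the constant must be tracked carefully (your stated conclusion is the right one, but it does not follow from the displayed form). Second, the entire weight of the argument sits in the lower bound and the upgrade from $L^2$ to uniform convergence, which you only sketch. The sketch is sound --- the resistance Cauchy--Schwarz bound $|f(x)-f(y)|^2\le r(x,y)\sum_z c(z,z+1)[f(z+1)-f(z)]^2$ together with $\sup_m|r(1,m)-(m-1)|=o(N)$ from \eqref{LLN} does give macroscopic equicontinuity and hence both compactness and the $\ell^\infty$ upgrade --- but the identification of the limit requires an induction on $i$ (orthogonality to $g_0,\dots,g_{i-1}$ only passes to the limit once their shapes are already known), and the $L^2$ normalization must be handled before the pointwise normalization $g_i(1)=1$ to avoid circularity in the equicontinuity estimate. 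Neither issue is fatal, but both must be written out; as it stands the proposal is a correct program rather than a complete proof.
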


\begin{rem}
Note that when the sequence $(r^{(N)}(x-1,x))_{ 2 \le x \le N}$ is IID with common law denoted by $\bbP$ and its expectation $\bbE[r(x, x+1)]=1$, by the strong law of large numbers we have  
\begin{equation*}
\bbP \left( \lim_{N \to \infty}\, \frac{1}{N} \max_{2 \le m \le N}\, \vert r(1,m)-(m-1) \vert=0 \right)\;=\;1\,,
\end{equation*}
and thus the assumption~\eqref{LLN} holds almost surely. 
\end{rem}
Furthermore,  we also need the following assumptions concerning resistances and the number of particles below.
 
\begin{assumption}\label{assum:1}
 There exist  constants $\upsilon\in (0, 1)$ and $C_\bbP>0$  such that 
\begin{equation}\label{upbd:asumresist}
   \max_{1 \le x< N} r(x, x+1) \;\le\; C_\bbP \exp \left( { (\log N)^\upsilon}\right) \,.
  \end{equation} 

\end{assumption}

\begin{rem} The assumption $\upsilon<1$ is almost optimal shown in the one slow bond case studied
in \cite{Yang2024oneslow}.
\end{rem}

\begin{assumption}\label{assum:2}
 There exists a sequence of positive numbers $\left( \bar \Upsilon_N\right)_N >0$  such that 
\begin{equation}\label{assumpweak:lwbdresist}
\min_{1 \le x< N} r(x, x+1) \;\ge\; \bar \Upsilon_N
\end{equation}
where $\lim_{N \to \infty} \bar \Upsilon_N=0$ and  $\lim_{N \to \infty} \bar \Upsilon_N  \log N = \infty$.
\end{assumption}

To state the results about simple exclusion process, we need a further assumption about the number of particles. 
 \begin{assumption}\label{assum:nparticles}
There exists  $\varrho \in (0, 1]$ and $c_{\varrho}>0$ such that  for all $N$ sufficiently large, the number of particles $k_N$ satisfies
\begin{equation*}
c_\varrho N^\varrho \;\le\;  k_N  \;\le\; N/2\,.
\end{equation*} 
 \end{assumption}

\begin{theorem}\label{th:mixexclusion}
Concerning the mixing time of the simple exclusion process associated with generator $\cL_{N,k}$ defined in \eqref{generator:SEP},   under Assumptions \ref{assum:1}, \ref{assum:2}, \ref{assum:nparticles} and  \eqref{LLN}, for all $\gep \in (0,1)$ we have
\begin{equation}\label{theq:mix}
\lim_{N \to \infty}  \frac{ 2 \pi^2 t_{\Mix}^{N,k}(\gep) }{N^2\log k_N } \;=\; 1 \,,
\end{equation}
which implies cutoff.
\end{theorem}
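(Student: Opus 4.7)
The plan is to establish the matching bounds
\begin{equation*}
(1-o(1))\,\frac{N^2 \log k_N}{2\pi^2} \;\le\; t_{\Mix}^{N,k}(\gep) \;\le\; (1+o(1))\,\frac{N^2 \log k_N}{2\pi^2}
\end{equation*}
by transplanting Wilson's second-moment lower bound \cite{wilson2004mixing} and Lacoin's censoring-based upper bound \cite{lacoin2016mixing} to the present inhomogeneous setting, using Theorem A as the sole spectral input.

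\textbf{Lower bound (Wilson).} Take as test function $\Psi(\xi) := \sum_{x=1}^N g_1(x)\xi(x)$. Since $\cL_{N,1}g_1 = -\gl_1 g_1$, a direct computation gives $\cL_{N,k}\Psi = -\gl_1 \Psi$, so $\bbE_\xi[\Psi(\eta_t)] = e^{-\gl_1 t}\Psi(\xi)$. Starting from the packed-left configuration $\xi^\star(x) = \ind_{x \le k_N}$, the shape approximation \eqref{eigfun:shape} gives $\Psi(\xi^\star) \asymp k_N$. The carré du champ
\begin{equation*}
\Gamma(\Psi)(\xi) \;=\; \sum_{x=1}^{N-1} c(x,x+1)\bigl(g_1(x+1)-g_1(x)\bigr)^2 \bigl(\xi(x+1)-\xi(x)\bigr)^2
\end{equation*}
has at most $2k_N$ nonzero summands, and each equals $r(x,x+1)\,(c\nabla g_1)(x+1)^2 = O(1/N^2)$ by \eqref{approx:der2eigfuns} and Assumption \ref{assum:1}, so $\Gamma(\Psi) = O(k_N/N^2)$ uniformly in $\xi$. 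Duhamel's formula $\Var_\xi(\Psi(\eta_t)) = \int_0^t e^{-2\gl_1(t-s)}\bbE_\xi[\Gamma(\Psi)(\eta_s)]\,\mathrm{d}s$ then yields $\Var_\xi(\Psi(\eta_t)) = O(k_N)$, whence $\bbE_\xi[\Psi(\eta_t)]^2/\Var_\xi(\Psi(\eta_t)) \gtrsim e^{-2\gl_1 t} k_N$; combined with \eqref{gap:asym} and Chebyshev's inequality this gives the lower bound.

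\textbf{Upper bound (Censoring).} Pass to the height function $H_t(x) := \sum_{y\le x}\eta_t(y)$, which embeds $\gO_{N,k}$ into a bounded lattice carrying a partial order $\preceq$ under which the dynamics is attractive. The Peres--Winkler censoring inequality \cite{peres2013can} then bounds $d_{N,k}(t)$ by the expected $L^\infty$ distance between monotonically coupled processes $H^+$ and $H^-$ started from the $\preceq$-maximal and $\preceq$-minimal configurations. Following Lacoin \cite{lacoin2016mixing}, one controls this gap by a spectral decomposition along the single-particle eigenbasis from Theorem A: the shape asymptotic \eqref{eigfun:shape} bounds $\|g_i\|_\infty$ and the eigenvalue asymptotic \eqref{gap:asym} produces decay of the $i$-th mode at rate $\gl_i \sim \pi^2 i^2/N^2$. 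At $t = (1+\gep)(2\pi^2)^{-1}N^2 \log k_N$ this forces the expected $L^\infty$ gap to be $o(1)$; since the gap is non-negative, integer-valued and non-increasing under the coupling, Markov's inequality upgrades this to the required total variation bound.

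\textbf{Main obstacle.} The crux is the upper bound. Lacoin's sharp constant $(2\pi^2)^{-1}$ in the homogeneous case rests on exact trigonometric identities which here must be replaced by asymptotic surrogates. Theorem A, as stated, is uniform only over $1 \le i \le K_0$ for prefixed $K_0$, whereas the spectral decomposition in the censoring argument requires uniform control over the growing range $1 \le i \le C\log k_N$; extending the estimates of \cite{Yang2024SpectralLight} to this range is the main technical work. Assumption \ref{assum:1} is essential to prevent rare slow bonds from creating a trap-induced secondary time scale, while Assumption \ref{assum:2} rules out rare fast bonds that would blow up $\|g_i\|_\infty$ for moderately large $i$; Assumption \ref{assum:nparticles} ensures $\log k_N \to \infty$ polynomially, so that the $O(1)$ additive errors propagated through both Wilson's bound and the censoring argument remain genuinely $o(\log k_N)$.
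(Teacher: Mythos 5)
Your proposal is a strategy outline rather than a proof, and the decisive gap is in the upper bound. You propose to run Lacoin's censoring argument with a spectral decomposition over the single-particle eigenbasis, and you yourself flag that this "requires uniform control over the growing range $1\le i\le C\log k_N$" which Theorem \ref{th:gapshapeder} does not provide, deferring this as "the main technical work". That deferred work is precisely what a proof must supply, so the upper bound is not established. In fact the paper takes a different route that sidesteps this issue entirely: Proposition \ref{prop:mixupLLN} is proved by the supermartingale/area method of \cite[Section 7]{labbe2018mixing}. One embeds $\lint 1,N\rint$ in a slightly longer segment, takes the principal eigenfunction $G$ there, and shows that the weighted area $A_t=\bar\delta_{\min}^{-1}\sum_x \bar G(x)\bigl[h_t^{\wedge}(x)-h_t^{\mu}(x)\bigr]$ is a nonnegative supermartingale contracting at rate $\bar\gl_1=(1+o(1))\pi^2/\bar N^2$; the coalescence time $T_1$ is then controlled through the martingale bracket via Proposition \ref{prop:surpmart} and a cascade of stopping times, with Assumption \ref{assum:2} used to lower-bound $\partial_t\langle A.\rangle_t\ge\bar\Upsilon_N\#\cD_t$ (not, as you suggest, to control $\Vert g_i\Vert_\infty$). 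Only the \emph{first} eigenfunction enters sharply; higher modes are needed only in the auxiliary Appendices B--C (to prove Proposition \ref{prop:tdmaxconf}), and there a \emph{fixed} $K_0$ depending on $\varrho$ suffices together with the crude $L^2$-normalization bound $\vert\fg_i(x)\vert\le\sqrt{2N\bar\Upsilon_N^{-1}}$ and $\gk_i>\gk_{K_0}$ for $i>K_0$. So even the obstacle you identify for your own route is not the real one: a spectral decomposition truncated at a constant $K_0$ with crude tail bounds is enough, but you would still have to carry out the skeleton/censoring second-moment analysis, which your sketch does not do.

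Your lower bound is essentially the paper's Proposition \ref{prop:mixlow} (Wilson's method with the test function $\tf_1$ and the Dynkin martingale), but one step is stated incorrectly: the summand of the carr\'e du champ equals $r(x,x+1)\bigl[(c\nabla g_1)(x+1)\bigr]^2$, and Assumption \ref{assum:1} only gives $r(x,x+1)\le C_\bbP e^{(\log N)^\upsilon}$, which is \emph{not} $O(1)$, so your claimed uniform bound $\Gamma(\Psi)=O(k_N/N^2)$ fails as written; the correct bound from your "at most $2k_N$ active bonds" observation is $O\bigl(k_N e^{(\log N)^\upsilon}/N^2\bigr)$. This loss can still be absorbed into the $o(1)$ of \eqref{theq:mix} because $\log k_N\gtrsim\varrho\log N\gg(\log N)^\upsilon$ under Assumption \ref{assum:nparticles}, so your single-initial-condition argument is repairable and would actually simplify the paper's two-case analysis ($k\ge N/64$ with the Dirac at $\wedge$ versus $k<N/64$ with a nondegenerate $\nu$ and the covariance estimate of Appendix A), at the price of only yielding the first-order asymptotics rather than the sharper window $\frac{1}{2\gl_1}\log k_N-C(\gep)/\gl_1$ of \eqref{lwbd:mixexclu}. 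As it stands, though, the proposal does not constitute a proof of the theorem.
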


\subsection*{Organization}
Section \ref{sec:prelim} is devoted to preliminaries about the natural partial order on configurations, a graphical construction for Markov chains, and that the spectral gap $\gap_{N, k}$ is independent of $k$. 
  Section \ref{sec:lwbdmixtime} is devoted to the lower bound on the mixing time. 
  Section \ref{sec:upbdmixtime} is devoted to the upper bound on the mixing time.
     Appendix \ref{secapped:cov} is devoted to an upper bound on the covariance of a probability measure used in the lower bound on the mixing time.  Appendix \ref{appsec:heighfun} is devoted to the estimate of the dynamics starting from the maximal height function.  Appendix \ref{app:propmaxconf} is devoted to the proof that the total variation distance between the equilibrium and  the distribution of the Markov chain starting from the maximal path (height function).  

\subsection*{Acknowledgments}
 S.Y.\ is very grateful to  Hubert Lacoin for suggesting this problem and  thanks  Tertuliano Franco and Hubert Lacoin for enlightening and insightful discussions. 
  Moreover, he also thanks sincerely Gideon Amir,  Chenlin Gu, Gady Kozma, and Leonardo Rolla for helpful discussions. S.Y.\ is supported by FAPSP 2023/12652-4.  Partial of this work was done during his stay in Bar-Ilan University supported by  Israel Science Foundation grants 1327/19
and 957/20.

\section{Preliminaries}\label{sec:prelim}
\subsection{Partial order}

For $\xi \in \gO_{N,k}$, we define a height function $h^\xi$ associated with $\xi$ by  $h^\xi(0)\colonequals 0$ and for all   $x \in \lint 1, N \rint$
\begin{equation}\label{heighfun}
h^{\xi}(x) \;\colonequals\; \sum_{y=1}^x  \xi(y) - \frac{k}{N} x  \,.
\end{equation}
The term $k x/N$ is subtracted so that $h^\xi(x)$ has zero mean under equilibrium. 
Furthermore, we identify the image set $\{h^\xi: \xi \in \gO_{N, k}\}$ with $\gO_{N, k}$, as it brings no confusion.
We define a partial order 
$"\le"$ 
on $\gO_{N,k} \times \gO_{N,k}$ as follows: for $\xi, \xi' \in \gO_{N,k}$,
\begin{equation}\label{partorder:gONk}
\left( \xi \;\le\; \xi' \right) \quad \Leftrightarrow 
\quad  \left( \forall\, x \in \lint 1, N \rint\,,\; h^\xi(x)\;\le\; h^{\xi'}(x)\right)\,.
\end{equation}
The maximal and minimal configurations of $\gO_{N, k}$ are respectively
\begin{equation*}
\wedge=\wedge_{N,k} \colonequals \ind_{\{1 \le x \le k\}}\,, \quad \vee=\vee_{N,k} \colonequals \ind_{\{N-k < x \le N\}}\,.
\end{equation*}
The notations of $\wedge$ and $\vee$ are due to the shapes of the maximal and minimal height functions respectively. With an abuse of notation,  $\cL_{N, k}$  also denotes the generator of the Markov chain with state space as  the set of heigh functions.

\subsection{A graphical construction}\label{subsec:graphconstr}

\begin{figure}[h]
\begin{tikzpicture}[scale=0.7]

 \draw[thick,->] (0,0) -- (13,0) node[anchor=north west]{$x$-axis};
\draw[thick,->] (0,-6) -- (0,6) node[anchor=south east] {$y$-axis};   

\draw[black, thick](0,0)--(6,6)--(12,0);
\draw[black, thick](0,0)--(6,-6)--(12,0);
\draw[gray](0,0)--(6,-6)--(12,0);
\draw[gray](1,-1)--(7,5);
\draw[gray](2,-2)--(8,4);
\draw[gray](3,-3)--(9,3);
\draw[gray](4,-4)--(10,2);
\draw[gray](5,-5)--(11,1);
\draw[gray](6,-6)--(12,0);

\draw[gray](1,1)--(7,-5);
\draw[gray](2,2)--(8,-4);
\draw[gray](3,3)--(9,-3);
\draw[gray](4,4)--(10,-2);
\draw[gray](5,5)--(11,-1);
\draw[gray](6,6)--(12,0);

\draw[blue, ultra thick](0,0)--(1,1)--(2,0)--(3,-1)--(4,0)--(5,-1)--(6,0)--(7,1)--(8,0)--(9,1)--(10,2)--(11,1)--(12,0);

\draw[blue, ultra thick, dashed](4,0)--(5,1)--(6, 0);

\draw  (5.1, 0.1)  edge[bend left,<->](5.9, 0.1);

 \foreach \x in {1,2,...,12} {\draw[thick] (\x,0) -- (\x,-0.3);}
 
 \foreach \x in {1,4,6,7,9, 10}
{\draw[fill] (\x,0) circle [radius=0.13];                   }

\node[below] at (1, 0){$1$};
\node[below] at (2, 0){$2$};
\node[below] at (5, 0){$5$};
\node[below] at (12, 0){$12$};
\end{tikzpicture}
\caption { A graphical construction: a solid dot represents a particle and the corresponding height function is given by \eqref{heighfun} with $k=6,\, N=12$. The correspondence between configurations and height functions is given in \eqref{heighfun}. In the figure, swapping the contents of the sites $5$ and $6$ in the configuration corresponds to flipping the corner of the height function at site $5$. }\label{fig:spinsystem}
\end{figure}
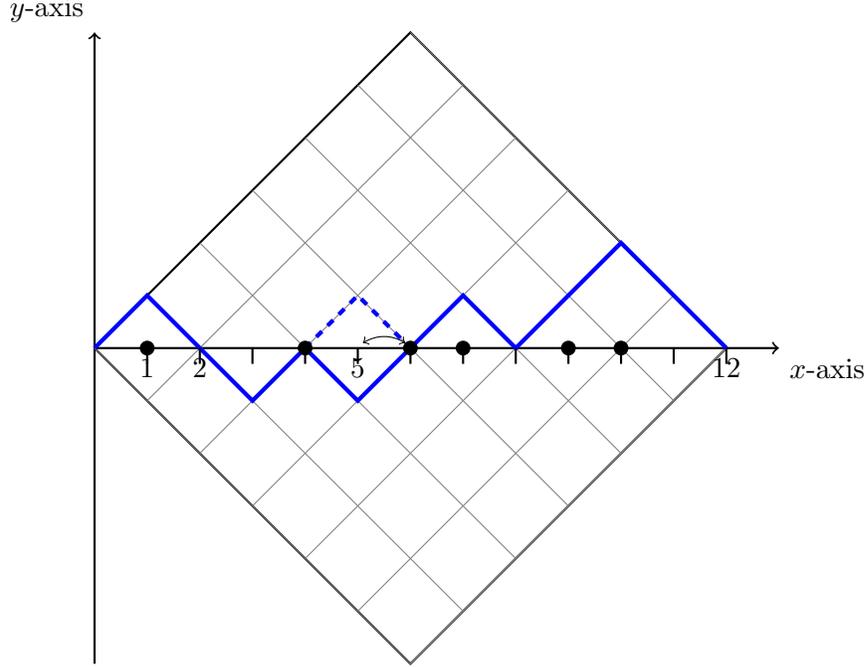
Following \cite[Section 8.1]{lacoin2016mixing},
now we provide a graphical construction for  the dynamics which allows to construct all the trajectories  $( \eta_t^{\xi})_{t \ge 0}$ starting  from all the initial state $\xi \in \gO_{N, k}$
and their associated  height function dynamics $(h_t^{\xi})_{t \ge 0}$  simultaneously, and which is a grand coupling and preserves the monotonicity.

We set the exponential clocks  in the centers of the parallelograms formed by all the possible corners and their counterparts (cf.\ Figure \ref{fig:spinsystem}).
Let
\begin{equation}\label{def:Thetaspins}
\begin{aligned}
\Theta\;\colonequals\;
\left\{(x, z) :  x\in \lint 1,\, N-1 \rint \text{ and }  z\in \{\max\left(0, x-N+k \right)-k x/N, \min \left(x, k \right)-k x/N\}  \right\}\,
\end{aligned}
\end{equation}
be the set of the centers mentioned above. 
Given a realization of $c(x,x+1)_{1\le x<N}$, for each $(x, z) \in \Theta$ we associate two IID Poisson clock processes $ \cT_{(x,z)}^{\uparrow}=(\cT_{(x,z)}^{\uparrow}(n))_{n \ge 0}$ 
and $ \cT_{(x,z)}^{\downarrow}=(\cT_{(x,z)}^{\downarrow}(n))_{n \ge 0}$ 
 where $\cT_{(x,z)}^{\downarrow}(0) =\cT_{(x,z)}^{\uparrow}(0)\colonequals  0$ and for $n \ge 1$, 
\begin{equation*}
\left(\cT_{(x,z)}^{\uparrow}(n)-\cT_{(x,z)}^{\uparrow}(n-1)\right)_{n \ge 1} \quad \text{ and } \quad
\left(\cT_{(x,z)}^{\downarrow}(n)-\cT_{(x,z)}^{\downarrow}(n-1)\right)_{n \ge 1}
\end{equation*}
are two fields of IID exponential distributions with mean $1/c(x,x+1)$.
Let $\bP$ denote the joint law of  the Poisson clock processes, and let $\bE$  denote the corresponding expectation.

\medskip
 
Given a realization of $\left(\cT_{(x,z)}^{\uparrow},\cT_{(x,z)}^{\downarrow} \right)_{(x,z)\in \Theta}$ and   $\xi \in \gO_{N,k}$ (or $h^\xi$), we construct \textemdash in a deterministic fashion  \textemdash $(\eta_t^{\xi})_{t \ge 0}\,,$ $(h^\xi_t)_{t \ge 0}$ the trajectories of the Markov chains starting with $\xi \in \gO_{N,k}$ and its corresponding height function $h^\xi$
as follows.

When the clock process $\cT^{\uparrow}_{(x, z)}$ rings at time $t=\cT_{(x, z)}^{\uparrow}(n)$  for $n\ge 1$  we update $h_{t_-}^\xi$  and  $\eta_{t_-}^{\xi}$  as follows:

\begin{itemize}
\item If $h_{t_-}^{\xi}(x)=z-\frac{1}{2}$ and $h_{t_-}^{\xi}$
has a local minimum at $x$  (equivalent to $\eta_{t_-}^\xi(x)=0$ and $\eta_{t_-}^\xi(x+1)=1$), let $h_{t}^\xi(x)=z+\frac{1}{2}$, $\eta_t^\xi(x)=1$ and  $\eta_t^\xi(x+1)=0$,  and the other coordinates remain  unchanged;
\item If  the conditions above is  not satisfied, we do nothing.
\end{itemize}

When the clock process $\cT^{\downarrow}_{(x, z)}$ rings at time $t=\cT^{\downarrow}_{(x, z)}(n)$ for $n\geq 1$,
 we update $h_{t_-}^\xi$ and  $\eta_{t_-}^{\xi}$ as follows:
\begin{itemize}
\item If $h_{t_-}^\xi(x)=z+\frac{1}{2}$ and $h_{t_-}^\xi$ has a local maximum at $x$ (equivalent to $\eta_{t_-}^\xi(x)=1$ and $\eta_{t_-}^\xi(x+1)=0$), let $h_{t}^\xi(x)=z-\frac{1}{2}$, $\eta_t^\xi(x)=0$ and  $\eta_t^\xi(x+1)=1$,  and the other coordinates remain  unchanged;
\item If  the conditions above is  not satisfied, we do nothing.
\end{itemize}

We also introduce a natural filtration $(\cF_t)_{t\ge 0}$ in this probability space,  for $(x,z) \in \Theta$
defining 
\begin{equation}\label{defizero}
\begin{gathered}
n_0((x, z),t):=\max\{ n\ge 1 \ : \  T_{(x, z)}^{\uparrow}(n)\le t\}\,,\\
m_0((x, z),t):=\max\{ n\ge 1 \ : \ T_{(x, z)}^{\downarrow}(n)\le t\}\,,
\end{gathered}
\end{equation}
with the convention $\max \emptyset=0$ and setting
\begin{equation}\label{filtrations}
 \cF_t \colonequals  \sigma\left(  T_{(x, z)}^{\uparrow}(n),\, T_{(x, z)}^{\downarrow}(m): \, n\le n_0((x, z), t),\, m\le m_0((x, z), t)\,, (x, z) \in \Theta \right)\,.
\end{equation}
By the graphic construction above, the readers can check that the following holds (cf.\ \cite[Proposition 3.1]{lacoin2016mixing} for details). 
\begin{proposition}\label{prop:attravtive}
For $\xi, \xi' \in \gO_{N,k}$, we have
\begin{equation*}
\begin{aligned}
\left( \xi \le \xi'\right)\; \quad  &\Rightarrow \quad \; \bP\left[ \forall\, t \ge 0, \; \eta_t^{\xi} \le \eta_t^{\xi'} \right]\;=\;1\,,\\
\left( h^\xi \le h^{\xi'}\right)\; \quad &\Rightarrow  \quad\; \bP\left[ \forall\, t \ge 0, \; h_t^{\xi} \le h_t^{\xi'} \right]\;=\;1\,.
\end{aligned}
\end{equation*}

\end{proposition}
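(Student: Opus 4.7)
The plan is to argue pathwise for the height-function version; the particle version then follows because $\xi\le\xi'$ is defined as $h^\xi\le h^{\xi'}$, and $\eta_t^\xi$ is recoverable from $h_t^\xi$ via consecutive differences. Under the graphical construction of Subsection~\ref{subsec:graphconstr}, the joint trajectory $t\mapsto (h_t^\xi, h_t^{\xi'})$ is piecewise constant, with jumps occurring only at ring times of the clocks $(\cT^{\uparrow}_{(x,z)}, \cT^{\downarrow}_{(x,z)})_{(x,z)\in\Theta}$; since $\Theta$ is finite, almost surely only finitely many rings fall in any bounded interval. It therefore suffices to prove that the partial order is preserved across a single ring time, and then conclude by a trivial induction on the ordered sequence of rings (and monotone continuity at $t\to\infty$).

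Fix such a ring, say an up-clock at $(x,z)$ firing at time $t$, and assume inductively that $h^\xi_{t_-}\le h^{\xi'}_{t_-}$. Since the update modifies only site $x$ in each trajectory, I only need to verify $h^\xi_t(x)\le h^{\xi'}_t(x)$. If both profiles satisfy the trigger condition at $(x,z)$, both equal $z+1/2$ at $x$ after the flip; if neither is triggered, both values are unchanged. If $h^{\xi'}$ is triggered while $h^\xi$ is not, then $h^\xi_t(x)=h^\xi_{t_-}(x)\le h^{\xi'}_{t_-}(x)=z-1/2<z+1/2=h^{\xi'}_t(x)$. The delicate sub-case is when $h^\xi$ is triggered but $h^{\xi'}$ is not: the trigger forces $h^\xi_{t_-}(x)=z-1/2$ together with $h^\xi_{t_-}(x\pm 1)=z+1/2$ (local minimum), so the inductive hypothesis yields $h^{\xi'}_{t_-}(x\pm 1)\ge z+1/2$. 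If one had $h^{\xi'}_{t_-}(x)<z+1/2$, the unit-step increment structure of the height function would force $h^{\xi'}_{t_-}(x)=z-1/2$ and $h^{\xi'}_{t_-}(x\pm 1)=z+1/2$ exactly, making $x$ a local minimum of $h^{\xi'}_{t_-}$ at height $z-1/2$, which would then be updated by the same up-clock, a contradiction. Hence $h^{\xi'}_{t_-}(x)\ge z+1/2=h^\xi_t(x)$, as required. The symmetric argument for a down-clock, with the roles of minima and maxima exchanged, is handled identically.

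The main subtle point is the last sub-case, and what makes it work is precisely the choice to index clocks by the \emph{centre} $(x,z)\in\Theta$ of a potential corner, rather than by the bond $\{x,x+1\}$: two profiles whose heights at $x$ differ cannot be simultaneously updated by the same clock unless they both sit at the specific value $z-1/2$ (or $z+1/2$), so any attempt by the lower profile to cross the upper one at the single height value targeted by the clock automatically brings the upper profile into the same local extremum configuration and triggers the same flip. This observation is what converts the graphical construction into a monotone coupling and closes the induction.
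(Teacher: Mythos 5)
Your argument is correct and is essentially the verification that the paper delegates to the reader (citing \cite[Proposition 3.1]{lacoin2016mixing}): induction over the a.s.\ finite, non-simultaneous ring times, with the only nontrivial case being a clock that triggers the lower profile but not the upper one. The single blemish is notational: with the paper's tilted height function \eqref{heighfun} the steps are $1-k/N$ and $-k/N$, so the neighbours of a local minimum at height $z-\tfrac12$ sit at $z-\tfrac12+k/N$ and $z+\tfrac12-k/N$ rather than both at $z+\tfrac12$; since $h^{\xi'}(x)-h^{\xi}(x)\in\bbZ$ for all $x$ (equal particle numbers, common tilt), your dichotomy ``either $h^{\xi'}_{t_-}(x)\ge z+\tfrac12$ or $h^{\xi'}_{t_-}(x)=z-\tfrac12$, and in the latter case the step constraints force the same local minimum and hence the same flip'' goes through verbatim, so the conclusion is unaffected.
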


Note that the graphic construction
above provides a grand coupling. We define  the coalescing time of the two dynamics starting with extremal configurations to be
\begin{equation*}
T \;\colonequals\;\inf \left\{ t \ge 0: \; \eta_t^{\wedge}=\eta_t^{\vee} \right\}\,.
\end{equation*}
By Proposition \ref{prop:attravtive},  for all $t \ge T$ and for all $\xi \in \gO_{N,k}$, we have
\begin{equation*}
\eta_t^{\wedge}\;=\;\eta_t^{\xi}\;=\;\eta_t^{\vee}\,.
\end{equation*}
Therefore, by triangle inequality and \cite[Proposition 4.7]{LPWMCMT} we have
\begin{equation}\label{dist-coupling}
d_{N,k}(t) \le\,  \max_{\xi \in \gO_{N,k}} \sum_{\xi'\in \gO_{N,k}} \mu_{N,k}(\xi') \left\Vert P_t^{\xi}-P_t^{\xi'} \right\Vert_{\TV} \;\le\;
 \max_{\xi,\, \xi' \in \gO_{N,k}} \left\Vert P_t^{\xi}-P_t^{\xi'} \right\Vert_{\TV}  \le \bP \left[ T>t \right]\,.
\end{equation}
The estimation on the coalesce time is the goal of Section \ref{sec:upbdmixtime}.

\subsection{The spectral gap is independent of $k$}

\begin{proposition}\label{prop:eigfunc}
Assuming $c(x, x+1)>0$ for all $1\le x<N$,  concerning the spectral gap of the simple exclusion process we have
\begin{equation}
 \gap_{N, k}\;=\; \gap_{N, 1}\, \mbox{  } \quad \forall\, k \in \lint 1, N-1 \rint\,.
\end{equation}

\end{proposition}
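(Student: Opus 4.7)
The plan is to establish the two matching bounds $\gap_{N, k} \le \gap_{N, 1}$ and $\gap_{N, k} \ge \gap_{N, 1}$ separately, and then combine them.

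First, I will prove the upper bound $\gap_{N, k} \le \gap_{N, 1}$ by lifting the principal eigenfunction $g_1$ of the single-particle generator $\cL_{N, 1}$ (provided by Theorem~\ref{th:gapshapeder}) to an eigenfunction of $\cL_{N, k}$ via the additive formula
\begin{equation*}
f(\xi) \;\colonequals\; \sum_{x=1}^N \xi(x)\, g_1(x) \,, \qquad \xi \in \gO_{N, k} \,.
\end{equation*}
A one-line computation at an exchange edge gives $f(\xi \circ \tau_{i, i+1}) - f(\xi) = (\xi(i+1) - \xi(i))(g_1(i) - g_1(i+1))$. Plugging this into the definition of $\cL_{N, k} f$ and reindexing so that all contributions involving $\xi(x)$ are grouped together produces precisely $\sum_{x=1}^N \xi(x) (\cL_{N, 1} g_1)(x)$, with the convention $c(0, 1) = c(N, N+1) = 0$ absorbing the reflecting boundary terms of $\cL_{N, 1}$ at $x = 1$ and $x = N$. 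Hence $\cL_{N, k} f = -\gl_1 f$. The strict monotonicity of $g_1$ asserted in Theorem~\ref{th:gapshapeder} yields $f(\wedge) \neq f(\vee)$ (pair $g_1(x)$ with $g_1(N - k + x)$ for $1 \le x \le k$), so $f$ is genuinely non-constant on $\gO_{N, k}$ and $\gl_1 = \gap_{N, 1}$ is an eigenvalue of $-\cL_{N, k}$. This gives $\gap_{N, k} \le \gap_{N, 1}$.

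For the matching lower bound, I will invoke the resolution of Aldous' spectral gap conjecture by Caputo, Liggett and Richthammer~\cite{caputo2010aldous}, which asserts that on any finite connected graph with arbitrary positive edge conductances the interchange process has the same spectral gap as the single-particle random walk. The SEP with $k$ particles arises from the interchange process on the space of $N$-labellings by forgetting the labels in $\lint 1, k \rint$ and those in $\lint k+1, N \rint$; the projected chain is Markov because the swap rates $c(i, i+1)$ depend only on the edge and not on the labels, i.e.\ only on the occupation profile. A standard lumpability argument then embeds the spectrum of $\cL_{N, k}$ into that of the interchange generator, and in particular $\gap_{N, k} \ge \gap_{\text{interchange}} = \gap_{N, 1}$.

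The main obstacle in the plan is the boundary bookkeeping in the algebraic identity $\cL_{N, k} f = -\gl_1 f$: one must confirm that the absence of clocks on the virtual edges $\{0, 1\}$ and $\{N, N+1\}$ translates, after telescoping, into the reflecting boundary form satisfied by $\cL_{N, 1} g_1$ at $x = 1$ and $x = N$. Once this is dispatched, the rest is a direct citation together with the elementary remark that lumping can only enlarge the spectral gap. The two inequalities then combine to give $\gap_{N, k} = \gap_{N, 1}$ for every $k \in \lint 1, N-1 \rint$, as claimed.
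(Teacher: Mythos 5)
Your upper bound is exactly the paper's: both arguments lift $g_1$ to $\tf(\xi)=\sum_{x}\xi(x)g_1(x)$ and verify $\cL_{N,k}\tf=-\gl_1\tf$ by regrouping the edge contributions (with the vanishing boundary clocks at $\{0,1\}$ and $\{N,N+1\}$ matching the reflecting form of $\cL_{N,1}$ at $x=1,N$), which gives $\gap_{N,k}\le\gap_{N,1}$. For the reverse inequality the two proofs genuinely diverge. The paper deliberately stays self-contained: it uses the strict monotonicity of $g_1$ to make $\tf$ strictly increasing for the partial order \eqref{partorder:gONk}, bounds $d_{N,k}(t)\le \bP\left[\eta_t^{\wedge}\neq\eta_t^{\vee}\right]\le \delta_{\min}^{-1}e^{-\gl_1 t}\left[\tf(\wedge)-\tf(\vee)\right]$ via the monotone grand coupling and Markov's inequality, and then reads off $\gap_{N,k}\ge\gl_1$ from the exponential-rate identity \eqref{speed:gap}. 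You instead invoke the Caputo--Liggett--Richthammer theorem together with lumping of the interchange process onto the exclusion process. This is legitimate --- the paper itself remarks that the proposition is a corollary of \cite[Theorem 1.1]{caputo2010aldous} --- and your lumpability step is sound: since the swap rates depend only on the edge, the occupation projection is a strong lumping, eigenfunctions of the projected generator compose with the projection to give eigenfunctions of the interchange generator with the same eigenvalue, so the projected spectrum is contained in the interchange spectrum and the gap can only increase. What the paper's route buys is independence from the (much deeper) resolution of Aldous' conjecture, at the price of using the coupling machinery of Section \ref{sec:prelim}; your route is shorter but outsources the lower bound to an external hard theorem. Both are correct.
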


\begin{proof}
 Proposition~\ref{prop:eigfunc} is  a corollary of  \cite[Theorem 1.1]{caputo2010aldous}. For the sake of completeness, we provide a self-contained proof in our setting. Recall that $g_i$ is the $i$-th eigenfunction for $\cL_{N,1}$, that is, the exclusion process with one particle.

	As in \cite[Equation (3.5)]{Hermon_Salez}, we define
	\begin{equation}\label{eigfuns:innprod}
	\tf_i(\xi) \colonequals \sum_{x=1}^N \xi(x)g_i(x)\,, \quad \forall\,\xi \in \gO_{N,k} \text{ and } i \in \lint 1, N-1\rint\,.
	\end{equation}
	We claim now that
	\begin{equation}\label{eigfunclaim:innprod}
	(\cL_{N,k} \tf_i)(\xi)=-\gl_i \tf_i(\xi)\,, \quad \forall\,\xi \in \gO_{N,k} \text{ and } i \in \lint 1, N-1\rint\,.
	\end{equation}
	For all $x \in \lint 1, N \rint$, let $H_x: \gO_{N,k} \mapsto \{0,1 \}$ be given by $H_x(\xi)=\xi(x)$. Observe that
	\begin{equation}\label{gen-comp1}
	\begin{cases}
	(\cL_{N,k} H_1)(\xi)
	=c(1,2)\left( \xi(2)-\xi(1)\right)\,,\\
	(\cL_{N,k} H_x)(\xi)=c(x,x+1) \left(\xi(x+1)-\xi(x)  \right)+c(x-1,x) \left( \xi(x-1)-\xi(x) \right)\,, \quad \forall\, x \in \lint 2, N-1\rint\,,\\
	(\cL_{N,k}H_N)(\xi)=c(N-1,N) \left(\xi(N-1)-\xi(N) \right)\,,
	\end{cases}
	\end{equation}
	and for all $x \in \lint 1, N \rint$
	\begin{equation}\label{gen-comp2}
	(\cL_{N, 1}g_i)(x)\;=\;c(x, x+1)\left[    g_i(x+1)-g_i(x)\right]-c(x-1, x)\left[    g_i(x)-g_i(x-1)\right]\;=\; -\gl_i g_i(x)\,. 
	\end{equation}
	Then by \eqref{gen-comp1}, \eqref{gen-comp2} and reorganizing the terms, we obtain
	\begin{equation}\label{eigfun:exclusionpro}
	(\cL_{N,k}\tf_i)(\xi)\;=\;\sum_{x=1}^N (\cL_{N,k} H_x)(\xi) g_i(x)
	\;=\;-\gl_i \sum_{x=1}^N \xi(x)g_i(x) \;=\;-\gl_i \tf_i(\xi)\,.
	\end{equation}
	Therefore, $(\tf_i)_{1 \le i \le N-1}$ are eigenfunctions of   $\cL_{N,k}$ with corresponding eigenvalues $(-\gl_i)_{i=1}^{N-1}$, and then 
\begin{equation}\label{upbd:spectralgap-excl}
\Gap_{N,k} \;\le\; \gl_1 \;=\;\Gap_{N,1}\,, \quad \forall \,  k \in  \lint 1, N-1\rint\,.
\end{equation}

	Now we move to show that $\Gap_{N,k} \ge \Gap_{N,1}$. 
	Writing $\tf (\xi)= \tf_1(\xi)= \sum_{x=1}^N \xi(x) g_1(x)$ and
$
\delta_{\min} \;\colonequals \; \min_{1\le x<N} g_1(x)-g_1(x+1)\;>0\,,
$	
	 we have
	\begin{equation*}
	\begin{gathered}
	\tf(\xi) \;\le\; \tf(\xi')\,, \quad \forall\, \xi \le \xi' \,,\\
	\delta_{\min}\;=\; \min_{\xi \le \xi', \hspace{0.05cm} \xi \neq \xi'} \tf(\xi')- \tf(\xi)\;>\;0\,.
	\end{gathered}
	\end{equation*}
	Moreover, recalling Proposition~ \ref{prop:attravtive} and \eqref{dist-coupling}, we have
	\begin{align*}
	d_{N,k}(t)\le \bP \left( \eta_t^{\wedge_{N,k}} \neq \eta_t^{\vee_{N,k}}  \right) \;&\le\; \frac{1}{\delta_{\min}} \bE \left[ \tf\left(\eta^{\wedge_{N,k}}_t\right)- \tf\left(\eta_t^{\vee_{N,k}}\right)\right]\;\\
	&=
	\;\frac{1}{\delta_{\min}} e^{-\gl_1 t} \left[ \tf\left(\wedge_{N,k}\right)-\tf\left(\vee_{N,k}\right) \right]\,,
	\end{align*}
	where  we used Markov inequality for the inequality and $\cL_{N,k}\tf=-\gl_1 \tf$ for  the  equality.
	Then by \eqref{speed:gap}, we have 
	\begin{equation}\label{lwbd:spectralgap-excl}
	-\Gap_{N,k}\;=\; \lim_{t \to \infty} \frac{1}{t}\log d_{N,k}(t)\;
	 \le \; \limsup_{t \to \infty} \frac{1}{t}  \log \left( \frac{1}{\delta_{\min}} e^{-\gl_1 t} \left[ \tf(\wedge_{N,k})-\tf(\vee_{N,k}) \right] \right)\;=\;-\Gap_{N,1}\,.
	\end{equation}
Combining \eqref{upbd:spectralgap-excl} and \eqref{lwbd:spectralgap-excl},  we obtain 
	\begin{equation}\label{gaps1k}
\Gap_{N,k} \;=\;\Gap_{N,1}\,, \quad \forall \,  k \in  \lint 1, N-1\rint\,.
	\end{equation}
\end{proof}

By Proposition \ref{prop:eigfunc}, we simply  write $\gap_N=\gap_{N, k}$, as the spectral gap does not depend on $k$. Furthermore, 
the combination of 
Theorem~\ref{th:gapshapeder}  and Proposition \ref{prop:eigfunc} implies  that  under the assumption \eqref{LLN} the spectral gap of the simple exclusion process satisfies
\begin{equation}
\lim_{N \to \infty}\, N^2 \gap_{N, k} \;=\; \pi^2\,.
\end{equation}

\section{The lower bound on the mixing time}\label{sec:lwbdmixtime}
In this section, our aim is to show the following proposition. 

\begin{proposition} \label{prop:mixlow}
If \eqref{LLN} holds and there exists $\gga>0$ such that  $ (\log N)^{1+\gga} \le k_N \le N/2$, for all $\gep \in (0, 1)$ for all $N$ sufficiently large we have
\begin{equation}\label{lwbd:mixexclu}
t_{\Mix}^{N,k}(\gep) \;\ge\; \frac{1}{2\gl_1} \log k_N-\frac{ C(\gep)}{\gl_1}\,.
\end{equation}

\end{proposition}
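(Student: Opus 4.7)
The plan is to implement a Wilson-style distinguishing-statistic lower bound, using the principal eigenfunction of $\cL_{N,k}$ from Proposition~\ref{prop:eigfunc} together with the shape and derivative estimates of Theorem~\ref{th:gapshapeder}. I set
\begin{equation*}
\tf(\xi)\;\colonequals\;\sum_{x=1}^{N}g_1(x)\,\xi(x),
\end{equation*}
so that by Proposition~\ref{prop:eigfunc} one has $\cL_{N,k}\tf=-\gl_1\tf$, and hence $\bE[\tf(\eta_t^{\xi})]=e^{-\gl_1 t}\tf(\xi)$ exactly. Applying \eqref{eigfun:shape} with $h_1(x)=\cos(\pi(x-1/2)/N)$ and the convexity inequality $\sin(\pi k_N/N)\ge 2k_N/N$ valid for $k_N\le N/2$ yields $\tf(\wedge)\ge c\,k_N$ for some $c>0$ independent of $N$; the same shape estimate gives $\sum_{x=1}^N g_1(x)=o(N)$, so that $\bE_{\mu_{N,k}}[\tf]=(k_N/N)\sum_x g_1(x)=o(k_N)$ is negligible compared with the mean under $P_t^{\wedge}$.

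I then control the two variances feeding Chebyshev. The equilibrium variance is handled by the exact pair-covariance $\Cov_{\mu_{N,k}}(\xi(x),\xi(y))=-k_N(N-k_N)/(N^2(N-1))$ for $x\ne y$, combined with $\sum_x g_1(x)^2\approx N/2$ from \eqref{eigfun:shape}, giving $\Var_{\mu_{N,k}}(\tf)\le C k_N$; this is the content announced for Appendix~\ref{secapped:cov}. For the dynamical variance I would use the carré-du-champ identity
\begin{equation*}
\Var_{P_t^{\wedge}}(\tf)\;=\;\int_{0}^{t}e^{-2\gl_1(t-s)}\,\bE_{P_s^{\wedge}}\bigl[\Gamma(\tf)\bigr]\,\dd s,\qquad \Gamma(\tf)(\xi)\;\colonequals\;\sum_{i=1}^{N-1}c(i,i+1)\bigl(g_1(i+1)-g_1(i)\bigr)^{2}\,\ind\{\xi(i)\neq\xi(i+1)\}.
\end{equation*}
Using \eqref{approx:der2eigfuns} one has $c(i,i+1)(g_1(i+1)-g_1(i))^2\approx \pi^2 r(i,i+1)N^{-2}\sin^2(\pi i/N)$, the global identity $\sum_{i}c(i,i+1)(g_1(i+1)-g_1(i))^2=\gl_1\sum_x g_1(x)^2\approx \tfrac12\gl_1 N$ provides a total budget, and the combinatorial bound $\sum_i\ind\{\xi(i)\ne\xi(i+1)\}\le 2k_N$ limits the number of contributing edges; the target is the uniform bound $\Var_{P_t^{\wedge}}(\tf)\le C k_N$.

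With both estimates in hand I apply Chebyshev at the midpoint level $M=\tfrac12(\bE[\tf(\eta_t^{\wedge})]+\bE_{\mu_{N,k}}[\tf])$ on the set $A_M=\{\xi:\tf(\xi)\ge M\}$, obtaining
\begin{equation*}
\|P_t^{\wedge}-\mu_{N,k}\|_{\TV}\;\ge\; 1-\frac{4\bigl(\Var_{P_t^{\wedge}}(\tf)+\Var_{\mu_{N,k}}(\tf)\bigr)}{\bigl(\bE[\tf(\eta_t^{\wedge})]-\bE_{\mu_{N,k}}[\tf]\bigr)^{2}}\;\ge\; 1-\frac{C'}{k_N\, e^{-2\gl_1 t}}.
\end{equation*}
Taking $t=(2\gl_1)^{-1}(\log k_N-C(\gep))$ with $C(\gep)$ large enough that the right-hand side exceeds $\gep$ produces \eqref{lwbd:mixexclu}.

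The main technical obstacle I anticipate is obtaining the uniform-in-$N$ bound on $\Var_{P_t^{\wedge}}(\tf)$. A naive pointwise estimate $\sup_\xi\Gamma(\tf)(\xi)\le 2k_N\max_i c(i,i+1)(g_1(i+1)-g_1(i))^2$ loses a factor $\max_i r(i,i+1)\le C_{\bbP}\exp((\log N)^{\upsilon})$ coming from Assumption~\ref{assum:1}, which would be fatal because the constant $C(\gep)$ in \eqref{lwbd:mixexclu} must not depend on $N$. The resolution has to exploit the total $L^{2}$-budget on $c(i,i+1)(g_1(i+1)-g_1(i))^2$ (in which the resistances are averaged against $\sin^{2}(\pi i/N)$ via the eigenfunction identity), rather than the pointwise maximum, together with the hypothesis $k_N\ge(\log N)^{1+\gga}$, in order to absorb the resistance fluctuations into the final $N$-independent constant $C(\gep)$.
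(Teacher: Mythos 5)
Your overall strategy --- the eigenfunction statistic $\tf=\sum_x g_1(x)\xi(x)$, the exact exponential decay of its mean, the martingale-bracket (carr\'e-du-champ) bound on the dynamical variance, the $O(k_N)$ equilibrium variance, and Chebyshev at a midpoint level --- is exactly the paper's, and your argument is complete in the regime $k_N\asymp N$: there the crude bound $\Gamma(\tf)(\xi)\le \frac{4\pi^2}{N^2}\sum_i r(i,i+1)\le \frac{8\pi^2}{N}$ (from \eqref{approx:der2eigfuns} and \eqref{LLN}) gives $\Var_{P_t^{\wedge}}(\tf)\le \frac{1}{2\gl_1}\cdot\frac{8\pi^2}{N}\le CN\le C'k_N$, which is precisely what the paper does for $k_N\ge N/64$.

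The gap is the small-$k$ regime, and it is exactly the obstacle you flag: the resolution you propose does not close it. The identity $\sum_i c(i,i+1)(g_1(i+1)-g_1(i))^2=\gl_1\sum_x g_1(x)^2$ controls the \emph{total} budget, but intersecting with ``at most $2k_N$ active edges'' does not extract the fraction $2k_N/N$ of that budget, because the active edges are random and may coincide with the heavy resistances. Under \eqref{LLN} alone a single resistance can be of size $\epsilon N$; one such active edge contributes $\frac{4\pi^2\epsilon}{N}$ to $\Gamma(\tf)$ and hence roughly $\frac{4\pi^2\epsilon}{N}\cdot\frac{1}{2\gl_1}\approx 2\epsilon N$ to the variance, which swamps $k_N$ whenever $k_N=o(N)$. (Assumption~\ref{assum:1} is not among the hypotheses of Proposition~\ref{prop:mixlow}, and even if it were, the residual factor $e^{(\log N)^{\upsilon}}$ cannot be absorbed by $k_N\ge(\log N)^{1+\gga}$, since the signal-to-noise ratio would be $e^{2C(\gep)-(\log N)^{\upsilon}}\to 0$.) To run your budget argument you would need $\bP[\eta_s^{\wedge}(i)\neq\eta_s^{\wedge}(i+1)]\le Ck_N/N$ uniformly in $i$ and $s$, which is false for the $\wedge$-start (the density near the left boundary is of order one at small times, and controlling it at intermediate times would require the full single-particle spectral decomposition). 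The paper's missing idea is to abandon the initial state $\wedge$ when $k_N<N/64$ and instead start from the measure $\nu$ obtained by sampling $\mu_{N,2k}$ and keeping the $k$ leftmost particles: monotonicity of the grand coupling gives $\eta_t^{\nu}(x)\le\eta_t^{\mu_{N,2k}}(x)$, hence $\bE[\overline\eta_t^{\nu}(x,x+1)]\le 4k/N$ for all $t$, and \eqref{LLN} then yields $\bE[M_{t_0}^2]\le 32k$ as in \eqref{upbd:martquad}. This switch costs two further estimates absent from your plan: the lower bound $\bE[\tf(\eta_0^{\nu})]\ge c k$ (the combinatorial computation around \eqref{lwbd:initial-heignf}) and the $O(k)$ control of the initial fluctuation $\Var(\tf(\eta_0^{\nu}))$, which is the actual content of Appendix~\ref{secapped:cov} (Lemma~\ref{lema:2ndmomLLN}); the equilibrium variance you attribute to that appendix is instead the elementary display \eqref{upbd3:2ndmomentMart}.
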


\subsection{Preliminaries for the lower bound on mixing time}\label{subsec:lwbd-mixing}
Our strategy is to combine the second moment method with the martingale approach. Note that  the function
\begin{equation}\label{eigfun:inner-principle}
\tf(\xi)=\tf_1(\xi)=\sum_{x=1}^N \xi(x) g_1(x)
\end{equation}
 is an eigenfunction satisfying $\cL_{N,k} \tf=-\gap_N \cdot \tf$, which is proved in \eqref{eigfunclaim:innprod}. 
 Let $(\eta_t^\nu)_{t \ge 0}$ be the dynamics constructed by the graphical construction with initial state sampled from a probability measure $\nu$ on $\gO_{N,k}$. Here $\bP$ (or $\bE$), with an abuse of notation, denotes the joint law of $\nu$  and the random source in the graphical construction.
For any positive number $\ell_N(\gep) >0$, 
\begin{equation}\label{lwbd:mixphil}
\begin{aligned}
d_{N,k}(t_0) & \;\ge\; \bP \left[ \tf\left(\eta_{t_0}^{\nu}\right) \ge \ell_N(\gep)\right]
-\bP \left[ \tf\left(\eta_{t_0}^{\mu}\right) \ge \ell_N(\gep) \right]\\
&\;=\; 
\bP \left[  \tf\left(\eta_{t_0}^{\nu}\right)- \bE\left[\tf\left(\eta_{t_0}^{\nu}\right) \right]  \ge  \ell_N(\gep)-\bE\left[\tf(\eta_{t_0}^{\nu}) \right]  \right]-
\bP \left[  \tf\left(\eta_{t_0}^{\mu}\right)- \bE\left[\tf(\eta_{t_0}^{\mu}) \right]  \ge \ell_N(\gep) \right]
\end{aligned}
\end{equation}
where we have used
\begin{equation}\label{mean:stat-eigfun}
\bE\left[\tf(\eta_{t_0}^{\mu}) \right] \;=\;\mu_{N,k}\left(\tf\right)\;=\;\frac{k}{N} \sum_{x=1}^N  g_1(x) \;=\; 0\,,
\end{equation}
as  $\mu_{N,k}$ is uniform on $\gO_{N,k}$ and  $\langle g_1, \ind \rangle_{\mu_{N,k}}=0$.

 In order to estimate $\bE[\tf(\eta_{t_0}^{\nu})]$,  
and $\tf(\eta_{t_0}^{\nu})-\bE[\tf(\eta_{t_0}^{\nu})]$, we use the martingale approach. For fixed $t_0>0$, we define
\begin{equation*}
F(t, \xi) \colonequals e^{\gl_1(t-t_0)} \tf(\xi)\,, \quad \forall \;  \xi \in \gO_{N,k} \,,
\end{equation*}
and set
\begin{equation}\label{def:martin}
M_t \;\colonequals\; F(t,\eta_t^{\nu})-F(0,\eta_0^{\nu})-\int_0^t \left( \partial_s+\cL_{N,k} \right)F(s, \eta_s^{\nu}) \; \dd s \,.
\end{equation}
Note that $(M_t)_{t \ge 0}$ is a Dynkin martingale w.r.t.\ the   filtration $(\cF_t)_{t \ge 0}$ defined in \eqref{filtrations} (cf.\ \cite[Lemma 5.1 in Appendix 1]{LandimHydrodynamicsbk}).
Moreover, as $\cL_{N,k} \tf=-\gap_{N,k} \cdot \tf$, we have
\begin{equation*}
\left( \partial_t+\cL_{N,k} \right)F(t, \xi)\;=\;\gl_1 F(t,\xi)-\gl_1 F(t,\xi)\;=\;0\,,
\end{equation*}
and thus $M_t=F(t,\eta_t^{\nu})-F(0,\eta_0^{\nu})$, which implies
\begin{equation}\label{mart:meanmaxicon}
\bE \left[\tf\left(\eta_{t_0}^{\nu}\right) \right]\;=\;\bE \left[F\left(t_0, \eta_{t_0}^{\nu}\right) \right]\;=\;\bE \left[ F(0, \eta_{0}^{\nu})\right]\; =\; e^{-\gl_1 t_0} \bE \left[\tf \left(\eta_{0}^{\nu} \right) \right]\,.
\end{equation}
Now we discuss $\bE\left[\tf(\eta_{t_0}^{\nu})-\bE\left[\tf(\eta_{t_0}^{\nu})\right]\right]^2$ according to whether $\nu$ is degenerated as follows:

\begin{enumerate}

\item
If $\nu$ concentrates at one configuration, then
\begin{equation}\label{degvar:premierdemi}
 \bE\left[\tf(\eta_{t_0}^{\nu})-\bE\left[\tf(\eta_{t_0}^{\nu})\right]\right]^2\;=\;
\bE \left[ F(t_0, \eta_{t_0}^\nu)-F(0, \eta_0^\nu)  \right]^2 \;=\;
 \bE \left[ M_{t_0}^2\right]\,,
\end{equation}
where we have used \eqref{mart:meanmaxicon} and $\bE \left[F(0, \eta_0^\nu) \right]=F(0, \eta_0^\nu)$ in the first equality.

\item
 If $\nu$ is non-degenerated, we have 
\begin{equation}\label{nondegvar:premierdemi}
\begin{aligned}
 \bE\left[\tf(\eta_{t_0}^{\nu})-\bE\left[\tf(\eta_{t_0}^{\nu})\right]\right]^2\;&=\;
  \bE\left[F(t_0, \eta_{t_0}^{\nu})- F(0, \eta_{0}^{\nu})+F(0, \eta_{0}^{\nu})-
  \bE\left[F(0, \eta_0^{\nu})\right]\right]^2\\
  \;& \le \; 2\bE \left[M_{t_0}^2 \right]+ 2\bE \left[ F(0, \eta_{0}^{\nu})-
  \bE\left[F(0, \eta_0^{\nu}) \right]\right]^2 \,.
  \end{aligned}
\end{equation}
 \end{enumerate}
We estimate $\bE[ M_t^2]$ by controlling the martingale bracket $\langle M. \rangle$, which is such that the process $\big(M^2_t- \langle M. \rangle_t\big)_{t\geq 0}$ is a martingale w.r.t.\ the   filtration $(\cF_t)_{t \ge 0}$. 
Moreover,  a transition  
across the edge $\{ n,n+1\}$ at time $t$
 changes the value of $M_t$ in absolute value by  $e^{\gl_1(t-t_0)}  \vert g(n+1)-g(n)\vert$. By 
 \cite[Lemma 5.1 in Appendix 1]{LandimHydrodynamicsbk}, writing
 \begin{equation}\label{change-rate}
 \overline \eta^{\nu}_t(x,x+1) \colonequals  \eta^{\nu}_t(x)\left(1-\eta^{\nu}_t(x+1) \right)+ \eta^{\nu}_t(x+1)\left(1-\eta^{\nu}_t(x) \right)
 \end{equation}
  we have
\begin{equation*}
\partial_t \langle M. \rangle_t \;=\;   e^{2\gl_1(t-t_0)} \sum_{x=1}^{N-1}  \overline \eta^{\nu}_t(x,x+1) r(x,x+1)  \left[ c(x,x+1)(g(x)-g(x+1))\right]^2\,.
\end{equation*}
By Theorem \ref{th:gapshapeder}--\eqref{approx:der2eigfuns}, for all $N$ sufficiently large, we have
\begin{equation}\label{increment}
c(n,n+1)  \vert g(n+1)-g(n) \vert \;\le\; \vert h(n+1)-h(n) \vert+ \frac{\pi}{N}\ \;\le\;\frac{ 2\pi}{N}\,,
\end{equation}
and then 
\begin{equation}\label{upbd:martbracket}
\begin{aligned}
\partial_t \langle M. \rangle_t 
&\;\le\; \frac{4 \pi^2}{N^2}
e^{2\gl_1(t-t_0)}  \sum_{x=1}^{N-1} \overline \eta^{\nu}_t(x,x+1) r(x, x+1) \,.
\end{aligned}
\end{equation}
Therefore, 
\begin{equation}\label{upbd:2ndmomentMart}
\begin{aligned}
\bE\left[M_{t_0}^2\right]\;=\;&\bE\left [\langle M. \rangle_{t_0}\right]
\;\le\;& \int_0^{t_0} \frac{4 \pi^2}{N^2} e^{2\gl_1(t-t_0)}  \sum_{x=1}^{N-1} \bE \left[ \overline \eta^{\nu}_t(x,x+1)\right] r(x,x+1)   \dd t\,.
\end{aligned}
\end{equation}

Concerning the last term in the r.h.s.\ of \eqref{lwbd:mixphil}, we have \eqref{mean:stat-eigfun} and
\begin{equation*}\label{var:stat-eigfun}
\bE \left[ \tf(\eta_{t_0}^{\mu})^2\right]
\;=\;\sum_{x=1}^N \frac{k}{N} g(x)^2+ \sum_{x=1}^N \sum_{y: y \neq x} \frac{k(k-1)}{N(N-1)} g(x)g(y)\;=\;\frac{k(N-k)}{N(N-1)} \sum_{x=1}^N g(x)^2\,,
\end{equation*}
where we have used $\sum_{x=1}^N g(x)=0$ in the last equality. 
Furthermore, by $k \le N/2$ and Theorem \ref{th:gapshapeder}--\eqref{eigfun:shape}, for all $N$ sufficiently large we have
\begin{equation}\label{upbd3:2ndmomentMart}
\bE\left[ \tf(\eta_{t_0}^{\mu})^2 \right]
\;=\; \frac{k(N-k)}{N(N-1)} \sum_{x=1}^N g(x)^2 \;\le\; \frac{k(N-k)}{N(N-1)} \cdot 2N \;\le \; 2k\,.
\end{equation}

We choose $t_0$ relatively small but large so that
  $\bE \left[ \tf(\eta_{t_0}^{\nu}) \right]$ is large, and we will show that both \eqref{upbd:2ndmomentMart}  and   $\bE \left[ F(0, \eta_{0}^{\nu})-
  \bE\left[F(0, \eta_0^{\nu}) \right]\right]^2$ in the r.h.s.\ of \eqref{nondegvar:premierdemi} are small.
Then we choose $\ell_N(\gep)$ close to $\bE \left[\tf( \eta_{t_0}^{\nu}) \right]$ so that 
the r.h.s.\ of \eqref{lwbd:mixphil} can be bounded from below by $1-\gep$. We carry out the upper bounds on  \eqref{upbd:2ndmomentMart}  and   $\bE \left[ F(0, \eta_{0}^{\nu})-
  \bE\left[F(0, \eta_0^{\nu}) \right]\right]^2$, a lower bound on \eqref{mart:meanmaxicon} according $N/64\le k \le N/2$ or $k<N/64$ in the following subsection.

\subsection{Proof of proposition \ref{prop:mixlow}}

   We deal with \eqref{lwbd:mixexclu}, based on  \eqref{mart:meanmaxicon}, \eqref{upbd:2ndmomentMart} and \eqref{upbd3:2ndmomentMart}. 

Now we move to deal with  \eqref{mart:meanmaxicon} and \eqref{upbd:2ndmomentMart} according to $ k \in [N/64,\, N/2]$ and $ k < N/64$. 

\begin{proof}[Proof for the case $ N/64\le k \le  N/2$]
We first deal with the case $k \in [N/64,\, N/2]$ by taking $\nu=\delta_\wedge$ (Dirac probability measure centered at $\wedge_{N, k}$).
 By Theorem \ref{th:gapshapeder} and \eqref{fun:h}, for all $\delta>0$  sufficiently small  there exists $\vartheta=\vartheta(\delta) \in (0, 1/6)$ small  such that for all $N$ sufficiently large we have
\begin{equation}\label{diff:eiffunrandder}
\begin{aligned}
&\max_{x \in \lint 1,\, N \rint} \;\vert g(x)-h(x) \vert\;<\; \delta\,,\\
&\min_{x \in \lint 1, \, N/2 \rint}\;   h(x) \;> \; 0\,, \\ 
&\min_{x \in \lint 1, \, (\frac{1}{2}-\vartheta)N \rint}\; h(x) \;\ge\; 3 \delta\,.
\end{aligned}
\end{equation}
 By \eqref{diff:eiffunrandder}
we have
\begin{equation*}
\begin{gathered}
\min_{x \in \lint 1,\, N/2 \rint}\;   g(x)  \; \ge\; - \delta\,, \\
\min_{x \in \lint 1,\, (\frac{1}{2}-\vartheta)N \rint}\; g(x) \;\ge\; 2 \delta\,.
\end{gathered}
\end{equation*}
Then by $k \le N/2$ and the monotonicity of $g_1$,  we have
\begin{equation*}
\tf(\wedge_{N,k})\;=\;\sum_{x=1}^k g_1(x)
\;\ge\; \sum_{x=1}^{ \lfloor k/2 \rfloor } g_1(x) + \sum_{x= \lfloor k/2 \rfloor+1}^{\lfloor (1-2 \vartheta)k \rfloor}  2\delta -\sum_{x=\lfloor (1-2 \vartheta)k \rfloor+1}^{k} \delta
\;\ge \;
\left( \frac{\sqrt{2}}{2}-\delta \right) \left\lfloor \frac{k}{2} \right\rfloor
\;\ge \; \frac{k}{5}
\end{equation*}
where we have used $\vartheta<1/6$ and $\delta>0$ is chosen to be sufficiently small such that the last inequality holds.
Taking $t_0=\frac{1}{2\gl_1} \log k- \frac{C(\gep)}{\gl_1}$  in \eqref{mart:meanmaxicon}, 
we have
\begin{equation}\label{2ndmom:maxconf}
\bE \left[ \tf(\eta_{t_0}^{\wedge})\right]\;=\;e^{-\gl_1 t_0} \tf(\wedge_{N,k}) 
\;\ge\; e^{-\gl_1 t_0} \frac{k}{5} 
\;=\; \frac{1}{5}e^{C(\gep)} \sqrt{k} 
\;\ge\; 8 \sqrt{ \frac{N}{\gep}}
\end{equation}
where we have used $k \ge N/64$ and $C(\gep)>0$ is chosen to be large enough such that the last inequality holds.
Now we turn to upper bounds on the r.h.s.\ of \eqref{upbd:2ndmomentMart}.
 Using  $\overline \eta^\wedge_t(x, x+1)\le 1$  and
$r(1,N) \le 2N$  by \eqref{LLN} we have

\begin{equation}\label{upbd2:2ndmomentMart}
\bE\left[M_{t_0}^2\right]\;=\;\bE\left [\langle M. \rangle_{t_0}\right] \;\le\; \int_0^{t_0}  e^{2\gl_1(t-t_0)} \frac{8\pi^2}{N} \dd t
\;\le\;\frac{4 \pi^2}{N \gl_1} \; \le \;
 8 N \,,
\end{equation}
where  we have used Theorem --\eqref{gap:asym} in the last inequality. 

In \eqref{lwbd:mixphil}, we take  $\ell_N(\gep) \colonequals 4 \sqrt{N/\gep}$, and by \eqref{degvar:premierdemi},  \eqref{2ndmom:maxconf}, \eqref{upbd2:2ndmomentMart} and \eqref{upbd3:2ndmomentMart}  to  obtain
\begin{equation}\label{lwbd:tvdistLLNBigk}
\begin{aligned}
d_{N,k}(t_0) 
& \;\ge\; 
\bP \left[  \tf\left(\eta_{t_0}^{\wedge}\right)- \bE\left[\tf\left(\eta_{t_0}^{\wedge}\right) \right]  \ge  \ell_N(\gep)-\bE\left[\tf(\eta_{t_0}^{\wedge}) \right]  \right]-
\bP \left[  \tf\left(\eta_{t_0}^{\mu}\right)- \bE\left[\tf(\eta_{t_0}^{\mu}) \right]  \ge \ell_N(\gep) \right]\\
&\;\ge\; 
\bP \left[ \vert \tf(\eta_{t_0}^{\wedge})- \bE\left[\tf(\eta_{t_0}^{\wedge}) \right] \vert \le 4 \sqrt{\frac{ N}{\gep}} \right]-
\bP \left[ \vert \tf(\eta_{t_0}^{\mu})- \bE\left[\tf(\eta_{t_0}^{\mu}) \right] \vert \ge 4 \sqrt{ \frac{N}{\gep}} \right]\\
&\;\ge \; 1-\frac{\gep}{2}-\frac{\gep}{2}\,,
\end{aligned}
\end{equation}
which allows to  conclude the proof for the case $k \in [N/64,\,  N/2]$.
\end{proof}

\begin{proof}[Proof for the case $k <N/64$]
Now we move to the case $k <N/64$. A nondegenerate probability measure $\nu$ that we plug in \eqref{nondegvar:premierdemi} is as follows:
  we first sample a configuration $\xi \in \gO_{N, 2k}$ according to 
 $\mu_{N, 2k}$ which is the uniform distribution on $\gO_{N, 2k}$,  and then the first $k$ particles are kept (counting from left to right) and the other particles are projected to empty sites.  
 By \eqref{upbd:2ndmomentMart} and 
$\overline \eta^{\nu}_t(x,x+1) \le \eta^{\nu}_t(x)+\eta^{\nu}_t(x+1)$, we have

 \begin{equation}\label{upbd:martquad}
 \begin{aligned}
\bE\left[M_{t_0}^2\right]\;=\;\bE\left [\langle M. \rangle_{t_0}\right] \;&\le\; \int_0^{t_0} e^{2\gl_1(t-t_0)} \frac{4 \pi^2}{N^2} \sum_{x=1}^{N-1}  \bE\left[\eta_t^\nu(x)+\eta_t^\nu(x+1)\right]  r(x,x+1)  \dd t\\
&\le\; 
\int_0^{t_0} e^{2\gl_1(t-t_0)} \frac{4 \pi^2}{N^2} \sum_{x=1}^{N-1}  \frac{4k}{N}  r(x,x+1)  \dd t \;\le \; 32 k\,,
\end{aligned}
\end{equation}
 where we have used Theorem \ref{th:gapshapeder}--\eqref{gap:asym}, \eqref{LLN}, and $\eta_t^{\nu}(x) \le  \eta_t^{\mu_{N, 2k}}(x)$ by coupling the two processes using the graphical construction.

 In order to provide a lower bound on the last term of \eqref{mart:meanmaxicon},
 given $\xi \in \gO_{N, 2k}$, we define  $\bar \xi \ : \  \lint 1, 2k\rint \mapsto \lint 1, N\rint$ as the  positions of the particles of $\xi$ from left to right:
\begin{equation}\label{partiposition}
 \{\bar \xi(i)=x\}  \quad  \Longleftrightarrow  \quad  \left\{\xi(x)=1 \text{ and } \sum_{y=1}^x \xi(y)=i \right\}.
\end{equation}
Setting $\chi \colonequals 17/16$ and
\begin{equation*}
\cA \colonequals  \left\{\xi \in \gO_{N, 2k}: \; \bar \xi\left( \left\lfloor\frac{k}{2}\right\rfloor\right) \le \frac{N \chi}{4}\,, \; \bar \xi\left(\left\lfloor\frac{3k}{4}\right\rfloor\right) \le \frac{3N \chi}{8}\,,  \; \bar \xi\left(k\right) \le \frac{N \chi}{2} \right\}\,,
\end{equation*}
by the monotone decreasing property of $g$ stated in Theorem~\ref{th:gapshapeder} we have 

\begin{equation}\label{lwbd:initial-heignf}
\begin{aligned}
&\bE \left[ \tf(\eta_0^\nu)\right]\;=\; \sum_{x=1}^N \nu\left( \xi(x) \right) g(x) \; \ge \;  \mu_{N, 2k}\left( \cA^\complement \right) k g(N)\\
 + &\mu_{N, 2k} \left( \cA\right)
\left[  \left\lfloor\frac{k}{2}\right\rfloor \cdot g\left( \left\lfloor \frac{N\chi}{4} \right\rfloor \right) + \left(\left\lfloor\frac{3k}{4}\right\rfloor -\left\lfloor\frac{k}{2}\right\rfloor\right)\cdot g\left(\left\lfloor \frac{3N\chi}{8} \right\rfloor \right)+ 
\left(k -\left\lfloor\frac{3k}{4}\right\rfloor\right)\cdot g\left(\left\lfloor \frac{N\chi}{2} \right\rfloor  \right) \right]\,.
\end{aligned}
\end{equation} 
By Theorem \ref{th:gapshapeder}--\eqref{eigfun:shape}, for any given $\delta>0$, for all $N$ sufficiently large we have 
\begin{equation}\label{bd:g4p}
\begin{aligned}
g(N) \;&\ge\; -(1+\delta)\,, 
\quad \quad  \quad \quad \quad 
g\left( \left \lfloor \frac{N \chi}{4}\right\rfloor \right) \;\ge\; \cos\left(\frac{\chi}{4}\pi \right)-\delta\,,\\
g\left(\left\lfloor \frac{3N \chi}{8}\right\rfloor \right) &\ge \; \cos\left(\frac{3 \chi}{8}\pi \right)-\delta\,,
\quad \quad
g\left(\left\lfloor \frac{N \chi}{2} \right\rfloor \right) \ge \; \cos\left(\frac{\chi}{2}\pi \right)-\delta\,,\\
\end{aligned}
\end{equation}
and  
\begin{equation}\label{upbd:smallprobmu2k}
\mu_{N, 2k}\left(\cA^\complement\right) \le 
\mu_{N, 2k} \left( \bar \xi\left( \left\lfloor\frac{k}{2}\right\rfloor\right) > \frac{N\chi}{4} \right)
+
\mu_{N, 2k} \left( \bar \xi\left(\left\lfloor\frac{3k}{4}\right\rfloor\right) > \frac{3N\chi}{8}  \right)
+
\mu_{N, 2k} \left( \bar \xi\left( k \right) > \frac{N\chi}{2}  \right)\,.
\end{equation}
Concerning the first term in the r.h.s.\ of \eqref{upbd:smallprobmu2k}, we have
\begin{align}
\mu_{N, 2k} \left( \bar \xi\left( \left\lfloor\frac{k}{2}\right\rfloor\right) > \frac{N\chi}{4} \right) \;&=\;  {N\choose 2k}^{-1} \;\sum_{x=0}^{\left\lfloor\frac{k}{2}\right\rfloor-1}\; { \left \lfloor \frac{N \chi}{4}  \right\rfloor \choose x} { N-\left \lfloor \frac{N\chi}{4} \right\rfloor  \choose 2k-x} \,,\label{upbd:1stprob}
\end{align}
Note that the following map with $\left \lfloor \frac{N\chi}{4}  \right\rfloor \le n \le \left \lfloor \frac{N\chi}{2}  \right\rfloor$ and  $x \in \lint 0,\, k \rint$,
\begin{equation}\label{binomial:map}
x \mapsto {n \choose x} {N-n \choose 2k -x}\,
\end{equation}
is increasing if and only if 
\begin{equation*}
{n \choose x+1} {N-n \choose 2k-(x+1)}  \left[{n \choose x} {N-n \choose 2k-x)}\right]^{-1} \;\ge\; 1
\end{equation*}
which is equivalent to 
\begin{equation}\label{increase:range}
x \;\le\; \frac{2k(n+1)-(N-n)-1}{N+2}
\;\lesssim\; \frac{2 k n}{N}\,,
\end{equation}
where the last inequality follows from $\liminf_{N \to \infty} k_N = \infty$ and $\left \lfloor \frac{N\chi}{4}  \right\rfloor \le n \le \left \lfloor \frac{N\chi}{2}  \right\rfloor$. 
In order to estimate binomial coefficients, we define a function
\begin{equation*}
 H_2(y)\; \colonequals\; -y \log_2 y -(1-y) \log_2 (1-y)\,, \quad y \in [0, \, 1]\,,
\end{equation*}
 which satisfies
 \begin{equation*}
 \begin{gathered}
 H'_2(y) \;=\; -(\log 2)^{-1}\left[\log y -\log (1-y) \right]\,,\\
 H''_2(y) \;=\; -(\log 2)^{-1} \frac{1}{y(1-y)} \; \le \; - \frac{4}{\log 2} \,.
 \end{gathered}
 \end{equation*}
Therefore, the function $\tilde H_2(y) \colonequals H_2(y)+\frac{2}{\log 2} y^2$ with $y \in [0, 1]$ is concave, that is, for $\theta, x, y  \in [0, 1]$,
\begin{equation*}
\theta \tilde H_2(x)+(1-\theta) \tilde H_2(y) \le \tilde H_2(\theta x+ (1-\theta)y)\,,
\end{equation*}
from which we obtain
\begin{equation}\label{convex:diff}
\theta H_2(x)+(1-\theta)H(y)-H(\theta x+(1-\theta)y) \;\le\; - \frac{2}{\log 2} \cdot  \theta (1-\theta) (x-y)^2\,.
\end{equation}
Moreover, by \cite[Lemma 7 in Chapter 10]{MacWilliams1977error}  we have
\begin{equation}\label{bd:binomial}
\frac{2^{N H_2(m/N)}}{\sqrt{8 \pi m \left(1-\frac{m}{N} \right) }} \;\le\;  {N \choose m} \;\le\; \frac{2^{N H_2(m/N)}}{\sqrt{2 \pi m \left(1-\frac{m}{N} \right) }}\,,\;\; \forall\, m \in \lint 1,\, N-1\rint\,.
\end{equation}

In order to treat \eqref{upbd:1stprob}, in view of \eqref{bd:binomial} we define
\begin{equation}\label{1stbn:IN}
\begin{aligned}
I_N \; &\colonequals\; 2k H_2 \left( \frac{ \left \lfloor \frac{k}{2} \right \rfloor}{2k} \right) +(N-2k)H_2 \left( \frac{ \left \lfloor \frac{N \chi}{4} \right \rfloor -
\left \lfloor \frac{k}{2} \right \rfloor}{N-2k}  \right)-NH_2 \left(\frac{\left \lfloor \frac{N \chi}{4} \right \rfloor}{N} \right)\\
\; &\le \;
-\frac{2}{\log 2} N \cdot \frac{2k}{N} \left( 1- \frac{2k}{N}\right) \left( \frac{\left \lfloor \frac{k}{2} \right \rfloor}{2k}-  \frac{\left \lfloor \frac{N \chi}{4} \right \rfloor- \left \lfloor \frac{k}{2} \right \rfloor }{N-2k} \right)^2 \;\le\; -c(\chi) k\,,
\end{aligned}
\end{equation}
where we have used \eqref{convex:diff} in the first inequality, and  in the last inequality have used $\chi=17/16$, $k \le N/64$ and $\liminf_{N \to \infty} k_N= \infty$.
With $n=\left \lfloor \frac{N\chi}{4}  \right\rfloor$ the map in \eqref{binomial:map} is increasing for $x \in \lint 0, \left \lfloor \frac{k}{2}  \right\rfloor\rint$.  Then by \eqref{bd:binomial} and \eqref{1stbn:IN}, concerning \eqref{upbd:1stprob}   we have
\begin{equation}\label{1st:partpos}
\begin{aligned}
\mu_{N, 2k} \left( \bar \xi\left( \left\lfloor\frac{k}{2}\right\rfloor\right) > \frac{N\chi}{4} \right) \; &\le \; \frac{k}{2}
{N\choose 2k}^{-1}  { \left \lfloor \frac{N\chi}{4}  \right\rfloor \choose \left\lfloor\frac{k}{2}\right\rfloor } { N-\left \lfloor \frac{N\chi}{4} \right\rfloor  \choose 2k-\left\lfloor\frac{k}{2}\right\rfloor }\\
\;&=\; 
\frac{k}{2} {2k \choose \left\lfloor\frac{k}{2}\right\rfloor }
 {N-2k \choose \left\lfloor\frac{N\chi}{4}\right\rfloor-\left\lfloor\frac{k}{2}\right\rfloor } {N \choose \left \lfloor \frac{N \chi}{4} \right\rfloor }^{-1}\\
 \;&\le\;
 \frac{\frac{k}{2} \sqrt{\left \lfloor \frac{N \chi}{4} \right\rfloor \left(1-\frac{\left \lfloor \frac{N \chi}{4} \right\rfloor}{N} \right)} \cdot 2^{I_{N,1}} }{\sqrt{\left \lfloor \frac{k}{2} \right\rfloor \left(1- \frac{\left \lfloor \frac{k}{2} \right\rfloor}{2k} \right) \cdot \left(
\left \lfloor \frac{N\chi}{4} \right\rfloor - \left \lfloor \frac{k}{2} \right\rfloor\right) \left(1-\frac{\left \lfloor \frac{N\chi}{4} \right\rfloor - \left \lfloor \frac{k}{2} \right\rfloor}{N-2k} \right)
  }}\\
  \; &\le \; C(\chi) \sqrt{k} 2^{I_N} \;\le\; C(\chi) \sqrt{k} 2^{-c(\chi)k}\,.
\end{aligned}
\end{equation} 
By analogous analysis, concerning the other two terms in the r.h.s.\ of \eqref{upbd:smallprobmu2k} we have
\begin{align}
\mu_{N, 2k} \left( \bar \xi\left( \left\lfloor\frac{3k}{4}\right\rfloor\right) > \frac{3N\chi}{8} \right) 
   \;&\le \; C(\chi) \sqrt{k} 2^{-c(\chi)k}\,,\label{2nd:partpos}\\
   \mu_{N, 2k} \left( \bar \xi\left( k\right) > \frac{N\chi}{2} \right) 
    \;&\le \; C(\chi) \sqrt{k}2^{-c(\chi)k}\,.\label{3rd:partpos}
\end{align}
Combing \eqref{1st:partpos}, \eqref{2nd:partpos}, \eqref{3rd:partpos} and \eqref{bd:g4p}, by \eqref{lwbd:initial-heignf} we have 
\begin{equation}\label{lwbd:expsmallk}
\begin{aligned}
\bE \left[ \tf(\eta_0^\nu)\right] \; 
& \ge \; 
-6 C(\chi)k^{3/2} 2^{-c(\chi)k}+
\left(1-3C(\chi)\sqrt{k} 2^{-c(\chi)k} \right) \bigg[  \left\lfloor\frac{k}{2}\right\rfloor \cdot \left( \cos\left(\frac{\chi \pi}{4} \right)-\delta\right)\\
& \quad  + \left(\left\lfloor\frac{3k}{4}\right\rfloor -\left\lfloor\frac{k}{2}\right\rfloor\right)\cdot \left( \cos\left(\frac{3\chi \pi}{8} \right)-\delta\right)+ 
\left(k -\left\lfloor\frac{3k}{4}\right\rfloor\right)\cdot \left( \cos\left(\frac{\chi \pi }{2} \right)-\delta  \right)\bigg]\\
& \ge \;  -6 C(\chi)k^{3/2} 2^{-c(\chi)k}+
\left(1-3C(\chi)\sqrt{k} 2^{-c(\chi)k} \right) \left\lfloor\frac{k}{2}\right\rfloor \cdot \left( \cos\left(\frac{\chi \pi}{4} \right)-\delta\right)\\
& \ge \; c(\chi) k\,,
\end{aligned}
\end{equation}
where we have used $\cos\left(\frac{3\chi \pi}{8} \right)-\delta>- \left(\cos\left(\frac{\chi \pi }{2} \right)-\delta \right)$ in the second last inequality.

Concerning the second term in the r.h.s.\ of  \eqref{nondegvar:premierdemi},  
using $t_0=\frac{1}{2\gl_1} \log k- \frac{C(\gep)}{\gl_1}$, 
$
e^{-2 \gl_1 t_0}\;=\;  \frac{1}{k}e^{2C(\gep)}
$ and  $\max_x \vert g(x) \vert \le 2$ by Theorem  \ref{th:gapshapeder}--\eqref{eigfun:shape}, for all $k$ sufficiently large we have

\begin{equation}\label{upbd:2ndmomeignu}
\bE \left[ F(0, \eta_{0}^{\nu})-
  \bE\left[F(0, \eta_0^{\nu}) \right]\right]^2 
  \;\le\; 
  4e^{-2 \gl_1 t_0}\, \sum_{1 \le x, \, y \le N} \, \vert \left[\nu \left( \xi(x) \xi(y) \right) \right)- \nu\left( \xi(x) \right) \nu \left(\xi(y) \right]\vert \;\le\; k\,,
\end{equation}
where we have used 
Lemma \ref{lema:2ndmomLLN} in the last inequality. 
Regarding  
\eqref{nondegvar:premierdemi}, by  \eqref{upbd:martquad} and \eqref{upbd:2ndmomeignu} we have
\begin{equation}\label{upbd:varsmallkLLN}
 \bE\left[\tf(\eta_{t_0}^{\nu})-\bE\left[\tf(\eta_{t_0}^{\nu})\right]\right]^2 \;\le \; 66 k\,.
\end{equation}
We can apply the argument as that in \eqref{lwbd:tvdistLLNBigk} for the case of $k \in \lint (\log N)^{1+\gga}, N/64\rint$. Taking $t_0=\frac{1}{2\gl_1} \log k- \frac{C(\gep)}{\gl_1}$  in \eqref{mart:meanmaxicon},  by \eqref{lwbd:expsmallk} 
we have
\begin{equation}\label{2ndmom:maxconfsmallk}
\bE \left[ \tf(\eta_{t_0}^{\nu})\right]\;=\;e^{-\gl_1 t_0} \bE \left[  \tf(\eta_0^ \nu) \right]
\;\ge\; e^{-\gl_1 t_0} c(\chi) k  \;\ge \;
c(\chi) e^{C(\gep)} \sqrt{k}\;\ge\;
e^{C(\gep)/2} \sqrt{k}
\end{equation}
where the constant $C(\gep)$ is chosen to be sufficiently large such that the last inequality holds. 
In \eqref{lwbd:mixphil}, we take  $\ell_N(\gep) \colonequals e^{C(\gep)/2} \sqrt{k}/2 $  to  obtain
\begin{equation}\label{lwbd:tvdistLLNsmallk}
\begin{aligned}
d_{N,k}(t_0) 
& \;\ge\; 
\bP \left[  \tf\left(\eta_{t_0}^{\nu}\right)- \bE\left[\tf\left(\eta_{t_0}^{\nu}\right) \right]  \ge  \ell_N(\gep)-\bE\left[\tf(\eta_{t_0}^{\nu}) \right]  \right]-
\bP \left[  \tf\left(\eta_{t_0}^{\mu}\right)- \bE\left[\tf(\eta_{t_0}^{\mu}) \right]  \ge \ell_N(\gep) \right]\\
&\;\ge\; 
\bP \left[ \vert \tf(\eta_{t_0}^{\nu})- \bE\left[\tf(\eta_{t_0}^{\nu}) \right] \vert \le  e^{C(\gep)/2} \sqrt{k}/2 \right]-
\bP \left[ \vert \tf(\eta_{t_0}^{\mu})- \bE\left[\tf(\eta_{t_0}^{\mu}) \right] \vert \ge e^{C(\gep)/2} \sqrt{k}/2 \right]\\
&\;\ge \; 1-\frac{\gep}{2}-\frac{\gep}{2}\,,
\end{aligned}
\end{equation}
where we have used  \eqref{upbd3:2ndmomentMart} and \eqref{upbd:varsmallkLLN} with $C(\gep)$ chosen to be sufficiently large such that the last inequality holds. 
This allows to  conclude the proof for the case $k \in  [ (\log N)^{1+\gga},  N/64]$.

\end{proof}

\section{The upper bound on the mixing time}\label{sec:upbdmixtime}

In this section,   the assumption \eqref{LLN} and Assumption \ref{assum:1}, \ref{assum:2} and \ref{assum:nparticles} are always in force without explicit statement. 
Our goal is to prove the following proposition.

\begin{proposition} \label{prop:mixupLLN}
Assuming \eqref{LLN}, Assumption \ref{assum:1}, Assumption \ref{assum:2}  and  Assumption \ref{assum:nparticles}, 
for all $\gep \in (0,1)$, all $\delta>0$ and  for all $N$ sufficiently large we have
\begin{equation}\label{upbd:mixexclu}
t_{\Mix}^{N,k}(\gep) \;\le\; \frac{(1+2\delta)^3}{2 \cdot \Gap_N} \log k_N\,.
\end{equation}
\end{proposition}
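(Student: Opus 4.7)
The plan is to bound the total variation distance by the coalescence probability of the monotone coupling and to follow the two-phase strategy introduced by Lacoin for the homogeneous exclusion process \cite{lacoin2016mixing}, with the spectral input provided by Theorem~\ref{th:gapshapeder} replacing the explicit trigonometric eigenfunctions. By the coupling bound \eqref{dist-coupling}, it suffices to prove $\bP[T > t^\star] \le \gep$ at $t^\star \colonequals \frac{(1+2\delta)^3}{2\gap_N}\log k_N$, where $T$ is the coalescence time of $\eta_t^\wedge$ and $\eta_t^\vee$. By Proposition~\ref{prop:attravtive} and the particle--hole symmetry of $\gO_{N,k}$, this reduces to showing that the height function $h_t^\wedge$ is pointwise close to zero with large probability by time $t^\star$.

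The first phase (treated in Appendix~\ref{appsec:heighfun}) fixes $t_1 \colonequals \frac{(1+2\delta)^2}{2\gap_N}\log k_N$ and shows that $h_{t_1}^\wedge$ is already small over the entire segment with large probability. The key inputs are: (i) the identity $\bE[\tf_i(\eta_t^\wedge)] = e^{-\gl_i t}\tf_i(\wedge)$ from \eqref{eigfunclaim:innprod}; (ii) the shape estimate \eqref{eigfun:shape} and the gap asymptotics \eqref{gap:asym} of Theorem~\ref{th:gapshapeder}; and (iii) a Dynkin martingale argument based on the quadratic variation bound \eqref{upbd:martbracket} to promote $L^2$ decay into pointwise control. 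Assumption~\ref{assum:nparticles} ensures that $\log k_N$ and $\log N$ are of the same order, so that the $O(\sqrt{N})$ initial fluctuations are dominated by the accumulated spectral decay.

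The second phase (treated in Appendix~\ref{app:propmaxconf}) uses the censoring inequality of Peres--Winkler \cite{peres2013can} to freeze all edges outside a collection of disjoint blocks $B_1,\ldots,B_J$ partitioning $\lint 1,N\rint$. Phase~I guarantees that the number of particles in each $B_j$ is close to its equilibrium value; within each block, Theorem~\ref{th:gapshapeder} applied to the local generator, combined with the quantitative control of the resistances coming from Assumptions~\ref{assum:1} and \ref{assum:2}, yields a block spectral gap comparable to $\pi^2/|B_j|^2$ up to a $1+o(1)$ factor. The remaining time $t^\star - t_1$ is then enough for each block to reach equilibrium in total variation through \eqref{relation:gapmixtime}, and a union bound over the $B_j$ completes the argument.

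The most delicate step will be Phase~II. One must check that once the dynamics is restricted to a block the spectral information of Theorem~\ref{th:gapshapeder} continues to apply at the same precision, and that the local invariant measure inherited from $\mu_{N,k}$ is close to the block-uniform measure. The thresholds $\upsilon<1$ in \eqref{upbd:asumresist} and $\bar\Upsilon_N\log N\to\infty$ in \eqref{assumpweak:lwbdresist} are exactly sharp enough to ensure this; the three factors of $(1+2\delta)$ appearing in \eqref{upbd:mixexclu} correspond to the three sources of error, namely the pointwise concentration of the height function in Phase~I, the block reduction via censoring, and the block-wise spectral approximation inside each $B_j$.
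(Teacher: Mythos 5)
Your two-phase plan (spectral $L^2$ decay for $h_t^{\wedge}$, followed by censoring and block-wise equilibration) is essentially the Lacoin strategy that the paper uses only for the \emph{auxiliary} Proposition~\ref{prop:tdmaxconf}, i.e.\ the statement that $\Vert P_t^{\wedge}-\mu_{N,k}\Vert_{\TV}\le\gep$ for $t\ge t_{\delta/2}$. The genuine gap is in your opening reduction: after invoking \eqref{dist-coupling} you assert that bounding $\bP[T>t^\star]$ ``reduces to showing that the height function $h_t^{\wedge}$ is pointwise close to zero with large probability''. It does not. The coalescence time $T$ (equivalently $T_1$ in \eqref{def:couptimes}) requires the coupled configurations to become \emph{equal}, and neither pointwise smallness of $h_t^{\wedge}$ nor closeness of $P_t^{\wedge}$ and $P_t^{\vee}$ to $\mu_{N,k}$ in total variation implies that the grand coupling has merged: two chains can each be at equilibrium in law while still being distinct under the coupling. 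The crude way to force merging, $\bP[\eta_t^{\wedge}\neq\eta_t^{\vee}]\le C\,\bE\bigl[\sum_x(h_t^{\wedge}(x)-h_t^{\vee}(x))\bigr]$, starts from an area of order $Nk$ and therefore requires $t\gtrsim \gl_1^{-1}\log(Nk)$, which loses a multiplicative constant and destroys the sharp factor $1/(2\pi^2)$. Consequently, even if every step of your Phases I and II were carried out, you would have proved only the single-initial-condition statement of Proposition~\ref{prop:tdmaxconf}, not the worst-case bound \eqref{upbd:mixexclu}.

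The paper closes exactly this gap with machinery absent from your proposal: it builds the weighted area $\bF$ in \eqref{def:funbF} from the principal eigenfunction of an enlarged segment, observes that $A_t$ in \eqref{def:Atfun} is a nonnegative supermartingale vanishing precisely at the coalescence time $T_1$, and then runs the cascade of stopping times of Proposition~\ref{prop:consecutstoptime}, controlling each increment by the bracket estimate of Proposition~\ref{prop:surpmart} together with the lower bound on $\partial_t\langle A.\rangle$ of Lemma~\ref{lema:lwbdderAt}, which counts admissible corner flips through $Q(h_t^{\mu})$ and $H(t)$. Your Phase I/Phase II material is not wasted---it is essentially how Proposition~\ref{prop:tdmaxconf} is proved in Appendices~\ref{appsec:heighfun} and \ref{app:propmaxconf}, and that proposition enters as an input to Lemma~\ref{lema:highestH}---but it is an ingredient of the proof, not the proof. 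To repair the proposal you must add an argument that actually forces the coupled chains to merge within the remaining time window, which is precisely what the supermartingale construction accomplishes.
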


Theorem \ref{th:mixexclusion} is a consequence of the combination of Theorem \ref{th:gapshapeder}, Proposition \ref{prop:mixlow} and Proposition \ref{prop:mixupLLN}.

\subsection{Preliminaries for the upper bounds on mixing time}

Recalling \eqref{dist-coupling}, in practice it is more feasible to couple two dynamics with one of them at equilibrium. We define
\begin{equation}\label{def:couptimes}
\begin{gathered}
T_1 \colonequals \inf \left\{t \ge 0: \; h^{\wedge}_t=h_t^{\mu} \right\}\,,\\
T_2 \colonequals \inf \left\{t \ge 0: \; h^{\vee}_t=h_t^{\mu} \right\}\,,
\end{gathered}
\end{equation}
where $(h_t^{\mu})_{t \ge 0}$ is the dynamics with initial configuration sampled from $\mu_{N,k}$ and then following the graphic construction for $t>0$. Moreover, we know that
\begin{equation*}
T\;=\;\max(T_1,T_2)\,.
\end{equation*}
We adapt the approach in \cite[Section 7]{labbe2018mixing} to deal with $T_1, T_2$. As 
\begin{equation*}
\bP[T>t] \;\le \;\bP[T_1>t]+\bP[T_2>t]\,,
\end{equation*}
 by symmetry it is sufficient to provide an upper bound on $\bP[T_1>t]$.

\subsection{Construction of a supermartingale} Inspired by \cite[Theorem 2]{wilson2004mixing}, 
we start by embedding the segment $\lint 1, N \rint$ in a longer  segment $\lint - \left \lfloor \delta N  \right\rfloor, N+ \left \lfloor \delta N  \right\rfloor  \rint$, and place conductance $\left( c(x, x+1)=1 \right)_{x \le 0}$ and $\left( c(x,x+1)=1 \right)_{x \ge N
}$. Set $\bar N \colonequals N+ 2 \left \lfloor \delta N  \right\rfloor+1 $. By Theorem \ref{th:gapshapeder}--\eqref{gap:asym} the spectral gap, denoted by $\bar \gl_1$, of the simple exclusion process in $\lint - \left \lfloor \delta N  \right\rfloor, N+ \left \lfloor \delta N  \right\rfloor  \rint$  satisfies
\begin{equation}\label{estimate:bargl1}
\lim_{N \to \infty} \frac{ \bar \gl_1 \bar N^2}{\pi^2}\;=\; 1\,,
\end{equation}
and by Theorem \ref{th:gapshapeder}--\eqref{eigfun:shape} the eigenfunction $\left(G(x)\right)_{x \in \lint - \left \lfloor \delta N  \right\rfloor, N+ \left \lfloor \delta N  \right\rfloor  \rint}$ with $G(-\lfloor \delta N \rfloor)=1$ corresponding to the  spectral gap $\bar \gl_1$  satisfies

\begin{align}\label{shap:Gshift}
		\lim_{N\to \infty}\; \sup_{x \in \lint - \left \lfloor \delta N  \right\rfloor,\, N+ \left \lfloor \delta N  \right\rfloor  \rint}\; &\left| G(x) - \cos\left(\frac{\pi(x  +\left \lfloor \delta N  \right\rfloor+ 1/2)}{\bar N}\right) \right|\;=\;0\,.
\end{align}
The reason why in \eqref{shap:Gshift} it is $+1/2$ rather than $-1/2$ is due to a shift of the coordinates. 
As in \eqref{gen-comp2}, we have

\begin{equation*}\label{autofun:G}
	(\Delta^{(c)}G)(x)\;=\;(c   \nabla G)(x+1)-(c \nabla G)(x)\;=\; -\bar \gl_1 G(x)\,, \quad \forall\, x \in \lint \left\lfloor-\delta N \right\rfloor,  N+\left\lfloor-\delta N \right\rfloor \rint\,.
	\end{equation*}
Moreover, by Theorem~\ref{th:gapshapeder},  $G$ is strictly monotone decreasing,  and set   $x \in \lint 1, N \rint$
\begin{equation}\label{difer:barG}
\bar G(x) \; \colonequals\; G(x)-G(x+1)\;>\;0\,.
\end{equation}
For $\xi \in \gO_{N,k}$, recalling \eqref{heighfun} 
  we define
\begin{equation}\label{def:funbF}
\bF(\xi) \colonequals \sum_{x=1}^{N-1} h^\xi(x) \bar G(x)\,.
\end{equation}
For $\xi,\, \xi'\in \gO_{N,k}$ with $\xi \le \xi'$, since $h^\xi(x)\le h^{\xi'}(x)$ and $\bar G(x)>0$,  we have 
\begin{equation*}
\bF(\xi) \;\le\; \bF(\xi')\,.
\end{equation*}
  Furthermore, if  $\xi \le \xi'$ with $\xi \neq \xi'$,  we have
 $\bF(\xi) \;<\; \bF(\xi')\,.$
Using  $h^\xi(0)= h^\xi(N)=0$ and for $x \in \lint 1, N-1\rint$  
 \begin{equation}\label{gen:height}
 \begin{aligned}
 \left(\cL_{N, k} h^\xi \right)(x)\;&=\;
 c(x, x+1)\left[\xi(x+1)-\xi(x) \right]\\
&=\;
 c(x, x+1) \left[ \left(h^\xi(x+1)-h^{\xi}(x)\right)- \left(h^\xi(x)-h^{\xi}(x-1)\right)\right]\,,
 \end{aligned}
 \end{equation}
 we obtain 
\begin{equation}
\begin{aligned}
\left( \cL_{N,k} \bF \right)(\xi) \;&=\;
\sum_{x=1}^{N-1} \bar G(x) \left(\cL_{N, k} h^\xi \right)(x)\\
&=\;
\sum_{x=1}^{N-1} h^\xi(x) \left[ c(x+1, x+2)\bar G(x+1)-c(x, x+1)\bar G(x) \right]\\
& \quad 
- \sum_{x=1}^{N-1} h^\xi(x) \left[ c(x, x+1)\bar G(x)-c(x-1, x)\bar G(x-1) \right]\\
& \quad - h^\xi(1) c(0, 1)\bar G(0)- h^\xi(N-1) c(N, N+1)\bar G(N)\\
&=\;
\sum_{x=1}^{N-1} h^\xi(x) \bar \gl_1\left[  G(x+1)-G(x) \right]\\
& \quad - h^\xi(1) c(0, 1)\bar G(0)- h^\xi(N-1) c(N, N+1)\bar G(N)\\
&=\;
-\bar \gl_1 \bF(\xi) - h^\xi(1) c(0, 1)\bar G(0)- h^\xi(N-1) c(N, N+1)\bar G(N)
\end{aligned}
\end{equation}
where we have used in the second last equality
\begin{equation*}
\begin{aligned}
 &c(x, x+1)\bar G(x)-c(x-1, x)\bar G(x-1)\\
 \;=\;&
 -\left\{c(x, x+1) \left[ G(x+1)-G(x) \right] + c(x-1, x) \left[ G(x-1)-G(x) \right] \right\}\\
 \;=\;& \bar \gl_1 G(x)\,.
 \end{aligned}
\end{equation*}
Note that for $\xi, \xi' \in \gO_{N,k}$ with $\xi \le \xi'$ in partial order sense \eqref{partorder:gONk}, we have
\begin{equation*}
h^\xi(1) \;\le\; h^{\xi'}(1), \quad \quad h^\xi(N-1) \;\le\; h^{ \xi'}(N-1)
\end{equation*}
 and  thus
\begin{equation}\label{gen:twostates}
\begin{aligned}
&\left( \cL_{N, k} \bF\right)(\xi')-\left( \cL_{N, k} \bF\right)(\xi)\\
\;&=\;
-\bar \gl_1 \left[\bF(\xi')- \bF(\xi) \right] \\
& \quad-\left[h^{\xi'}(1)- h^\xi(1) \right] c(0, 1)\bar G(0)- \left[h^{\xi'}(N-1)- h^\xi(N-1) \right] c(N, N+1)\bar G(N)\\
& \le \;
-\bar \gl_1 \left[\bF(\xi')- \bF(\xi) \right]\,.
\end{aligned}
\end{equation}
To record the effect of the  minimal change of  jumps on the function $\bF$, we define
\begin{equation}\label{def:bardelta}
\bar \delta_{\min}\, \colonequals\, \min_{2 \le x \le N+1} \, \vert (c\nabla G)(x)\vert\,, \quad \quad \bar \delta_{\max} \,\colonequals\, \max_{2 \le x \le N+1} \, \vert (c\nabla G)(x)\vert\,.
\end{equation}
Recalling \eqref{def:funbF},
we define
\begin{equation}\label{def:Atfun}
A_t \;\colonequals \; \bar \delta_{\min}^{-1} \left[ \bF(\eta_t^{\wedge})-\bF(\eta_t^{\mu})\right]\,.
\end{equation}
Note that $A_t=0$ if and only if $t \ge T_1$ where $T_1$ is   defined in \eqref{def:couptimes}.
By \cite[Lemma 5.1 in Appendix 1]{LandimHydrodynamicsbk}, defining
\begin{equation*}
M_t \colonequals A_t-A_0-\int_0^t \cL_{N,k} A_s \dd s
\end{equation*}
we know that  $\left(M_t, (\cF_t)_{t \ge 0}, \bP \right)$
is a Dynkin martingale.  By \eqref{gen:twostates} we have
\begin{equation}\label{genact:A}
\begin{aligned}
\cL_{N,k} A_t\;&=\; \bar \delta_{\min}^{-1} \left[ (\cL_{N,k} \bF)(\eta_t^\wedge)- (\cL_{N,k} \bF)(\eta_t^\mu) \right]\\
 \;&\le\;  -\bar \delta_{\min}^{-1} \bar \gl_1 \left[\bF(\eta^\wedge_t)-\bF(\eta^\mu_t) \right]\\
 \;&=\;- \bar \gl_1 A_t \;\le\; 0\,.
\end{aligned}
\end{equation}
 Therefore, $ \left( (A_t)_{t \ge 0},\, (\cF_t)_{t \ge 0},\,\bP \right)$ is a supermartingal. With an abuse of notation, let $\langle A. \rangle$ denote the predictable bracket associated with the martingale $\left(M_t, (\cF_t)_{t \ge 0}, \bP \right)$. Moreover, we have the following estimate on \eqref{def:bardelta}.

\begin{lemma}\label{lema:eigfunincrement}
Under the assumption \eqref{LLN}, $\left( c(x,x+1)=1 \right)_{x \le 0 
}$ and  $\left( c(x,x+1)=1 \right)_{ x \ge N
}$, for all $\delta>0$,
  if  $N$ is sufficiently large,   there exists a constant $C(\delta)>0$ such that
\begin{align}
 \frac{1}{C(\delta)N}\;&\le\; \bar \delta_{\min} \;\le\; \frac{C(\delta)}{N}\,,
 \label{eigcond:minincre}\\
 \frac{1}{C(\delta)N}\;&\le\; \bar \delta_{\max} \;\le\; \frac{C(\delta)}{N}\,. \label{eigcond:maxincre}
\end{align}

\end{lemma}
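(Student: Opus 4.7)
The plan is to reduce both bounds to the explicit cosine approximation for $G$ supplied by Theorem~\ref{th:gapshapeder}, applied on the enlarged segment $\lint -\lfloor \delta N\rfloor, N+\lfloor \delta N\rfloor\rint$ of length $\bar N$. The hypothesis \eqref{LLN} carries over to this enlarged segment (since the added edges have $c=r=1$), so the conclusions of Theorem~\ref{th:gapshapeder} apply. Set
\begin{equation*}
\bar H(x) \;\colonequals\; \cos\!\left(\frac{\pi\bigl(x+\lfloor \delta N\rfloor+1/2\bigr)}{\bar N}\right), \qquad x\in \lint -\lfloor \delta N\rfloor,\, N+\lfloor \delta N\rfloor\rint.
\end{equation*}
By \eqref{approx:der2eigfuns} applied in the enlarged system, we have the uniform control
\begin{equation*}
\sup_{x}\;\bigl|\bar N\,(c\nabla G)(x) - \bar N\,(\nabla \bar H)(x)\bigr| \;\xrightarrow[N\to\infty]{}\; 0.
\end{equation*}

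First I would compute $\nabla \bar H$ explicitly. The sum-to-product identity $\cos A - \cos B = -2\sin\!\bigl(\tfrac{A+B}{2}\bigr)\sin\!\bigl(\tfrac{A-B}{2}\bigr)$ gives
\begin{equation*}
(\nabla \bar H)(x) \;=\; \bar H(x)-\bar H(x-1) \;=\; -2\sin\!\left(\frac{\pi(x+\lfloor \delta N\rfloor)}{\bar N}\right)\sin\!\left(\frac{\pi}{2\bar N}\right).
\end{equation*}
Since $\bar N = N + 2\lfloor \delta N\rfloor +1$, we have $\sin(\pi/(2\bar N)) \sim \pi/(2\bar N) \asymp 1/N$ with constants depending only on $\delta$.

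The remaining point is to bound the first sine factor uniformly away from $0$ for $x\in \lint 2, N+1\rint$. For such $x$, the argument $\pi(x+\lfloor \delta N\rfloor)/\bar N$ lies in the interval
\begin{equation*}
\left[\,\frac{\pi(2+\lfloor \delta N\rfloor)}{\bar N},\;\frac{\pi(N+1+\lfloor \delta N\rfloor)}{\bar N}\,\right],
\end{equation*}
which, as $N\to\infty$, converges to $\bigl[\pi\delta/(1+2\delta),\; \pi(1+\delta)/(1+2\delta)\bigr]\subset (0,\pi)$, a closed interval strictly inside $(0,\pi)$. Hence for $N$ large the sine is pinched between two positive constants depending only on $\delta$, and in particular
\begin{equation*}
\frac{c(\delta)}{N} \;\le\; |(\nabla \bar H)(x)| \;\le\; \frac{C(\delta)}{N}, \qquad \forall\, x\in\lint 2, N+1\rint.
\end{equation*}

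Combining this pointwise $\asymp 1/N$ bound with the uniform $o(1/N)$ estimate $(c\nabla G)(x) = (\nabla \bar H)(x) + o(1/N)$ from \eqref{approx:der2eigfuns}, the same two-sided bound transfers to $|(c\nabla G)(x)|$ for every $x\in \lint 2, N+1\rint$ once $N$ is sufficiently large. Taking the minimum and maximum over $x$ yields \eqref{eigcond:minincre} and \eqref{eigcond:maxincre}. There is no genuine obstacle in the argument; the only mild bookkeeping is tracking the index shift $+\lfloor \delta N\rfloor$ introduced by embedding $\lint 1, N\rint$ into the enlarged segment, and verifying that the sine's argument stays a macroscopic distance away from the endpoints $0$ and $\pi$, for which the fact that $\delta>0$ is fixed is exactly what is needed.
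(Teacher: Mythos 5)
Your proof is correct, but it follows a genuinely different route from the paper's. The paper never touches the derivative estimate \eqref{approx:der2eigfuns}: instead it telescopes the eigenvalue equation $c(x,x+1)[G(x+1)-G(x)]=c(x-1,x)[G(x)-G(x-1)]-\bar\gl_1 G(x)$ from the left boundary to write $\bar\delta_{\min}=\bar\gl_1\min_{1\le x<N}\sum_{y=-\lfloor\delta N\rfloor}^{x}G(y)$, uses the strict monotonicity of $G$ to locate the minimizing $x$ at one of the two endpoints of the range, rewrites the right-endpoint sum via $\sum_y G(y)=0$, and then concludes from the $L^\infty$ shape estimate \eqref{shap:Gshift} together with the eigenvalue asymptotic \eqref{estimate:bargl1} that each endpoint sum is of order $\delta N$, so that $\bar\gl_1\cdot(\delta N)\asymp 1/N$. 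Your argument instead invokes the stronger input \eqref{approx:der2eigfuns} on the enlarged segment and evaluates $\nabla\bar H$ in closed form by the sum-to-product identity, obtaining a uniform two-sided bound $|(\nabla\bar H)(x)|\asymp_\delta 1/N$ because the sine's argument stays in a compact subinterval of $(0,\pi)$ — which is exactly where the padding by $\lfloor\delta N\rfloor$ unit conductances is used. The trade-off: your route is more direct and handles all four inequalities at once (the pointwise $\asymp 1/N$ bound immediately gives both the min and the max), but it requires the weighted-derivative approximation from Theorem~\ref{th:gapshapeder}, whereas the paper's route gets by with only the eigenfunction shape \eqref{eigfun:shape} and the gap asymptotic \eqref{gap:asym}, at the cost of the extra monotonicity/endpoint bookkeeping. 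Since the paper freely uses \eqref{approx:der2eigfuns} elsewhere (e.g.\ in \eqref{increment}), your reliance on it is legitimate, and your observation that the normalization $G(-\lfloor\delta N\rfloor)=1$ matches the shifted cosine is the right consistency check.
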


\begin{proof}
 By 
 \begin{equation*}
 c(x, x+1)\left[G(x+1)-G(x) \right]\;=\; c(x-1, x)\left[G(x)-G(x-1) \right]-\overline \gl_1 G(x)
 \end{equation*}

 and the fact that $G$ is strictly monotone decreasing due to Theorem \ref{th:gapshapeder}, we know that 

\begin{equation}\label{eigcond:minincrem2}
\begin{aligned}
\bar \delta_{\min}\;&=\; \bar \gl_1 \min_{1 \le x < N} \, \sum_{y=- \left\lfloor \delta N \right\rfloor}^x G(y)\\
\;&=\; \bar \gl_1 \min \left( \sum_{y=- \left\lfloor \delta N \right\rfloor}^1 G(y),\,\, \sum_{y=- \left\lfloor \delta N \right\rfloor}^{N-1} G(y) \right)\\
&=\;
\bar \gl_1 \min \left( \sum_{y=- \left\lfloor \delta N \right\rfloor}^1 G(y),\,\, -\sum_{y=N}^{\bar N} G(y) \right)
\end{aligned}
\end{equation}
where  we have used $\sum_{x=- \left\lfloor \delta N \right\rfloor}^{\bar N} G(x)=0$  in the last equality.
Then combing \eqref{eigcond:minincrem2} with \eqref{shap:Gshift} and \eqref{estimate:bargl1}, we obtain \eqref{eigcond:minincre}. 

Since 
\begin{equation*}
\bar \delta_{\min} \;\le \;  \bar \delta_{\max} \;= \;
\bar \gl_1 \max_{1 \le x < N} \, \sum_{y=- \left\lfloor \delta N \right\rfloor}^x G(y)\,,
\end{equation*}
we obtain \eqref{eigcond:maxincre} by   \eqref{estimate:bargl1}, \eqref{shap:Gshift} and \eqref{eigcond:minincre}.
\end{proof}

 \subsection{Set the plan for the proof of Proposition \ref{prop:mixupLLN}}
For $\delta>0$,  set 
\begin{equation}\label{def:tdelta}
t_{\delta}\;\colonequals\;\frac{1+\delta}{2 \bar \gl_1}\log k\,.
\end{equation} 
Recalling \eqref{def:couptimes}, our goal is to show that for all $\gep>0$ and $\delta>0$, for all $N$ sufficiently large we
 \begin{equation}\label{upbd:coupleT1}
 \bP\left[ T_1> t_{\delta}\right]\;\le\; \gep/2\,,
\end{equation}  
 which allows to obtain Proposition \ref{prop:mixupLLN}.
 Taking $\psi \in (0,  \delta/8)$ 
 and  $K\colonequals\lceil 1/(2\psi) \rceil$, 
 we define a sequence of successive stopping times $(\Tau_i)_{i=2}^{K}$ by
\begin{equation*}
\Tau_2 \;\colonequals\; \inf \left\{ t\geq t_{\delta/2} : \,  A_t \;\le\; \bar \delta_{\min}^{-1}  k^{\frac{1}{2}-2\psi}  \right\}\,,
\end{equation*}
and for  $3\leq i \leq K$,
\begin{equation*}
 \Tau_i \;\colonequals\; \inf \left\{ t\geq \Tau_{i-1}:\, A_t\;\le\, 
\bar \delta_{\min}^{-1}  k^{\frac{1}{2}-i\psi}  \right\} \,,
\end{equation*}
where $k$ is the number of particles. 
 For consistency of notations, we set $\Tau_{\infty}\colonequals\max \big(T_1, t_{\delta/2}\big)$.
 In the remainder, we always assume $T_1 \ge t_{\delta/2}$, otherwise \eqref{upbd:coupleT1} holds directly. The task in the remaining of this section is the following proposition.

\begin{proposition}\label{prop:consecutstoptime} 
 Given $\delta>0$, if $\psi \in (0, \delta/8)$ and  $K=\lceil 1/(2\psi) \rceil>1/(2\psi)$, we have
\begin{equation*}
\lim_{N \to \infty} \bP \left[ \{\Tau_2=t_{\delta/2}\} \cap  \left(  \bigcap_{i=3}^{K} \left\{\Delta \Tau_i \leq 2^{-i} (t_\delta-t_{\delta/2}) \right\}\right)  \cap \left\{\Tau_{\infty}-\Tau_K \le \bar \Upsilon_N^{-1}  \bar\delta_{\min}^{-2}\right\} \right]\;=\;1\,, 
\end{equation*}
where $\Delta \Tau_i\colonequals\Tau_i -\Tau_{i-1}$ for $3 \leq i \leq K$ and $\bar \Upsilon_N$ is defined in Assumption \ref{assum:2}.
\end{proposition}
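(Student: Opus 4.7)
The plan is to handle the three events of the intersection separately, using in every case the supermartingale inequality $\cL_{N,k} A_t \le -\bar\gl_1 A_t$ from \eqref{genact:A} and the two-sided bounds on $\bar\delta_{\min}, \bar\delta_{\max}$ in Lemma~\ref{lema:eigfunincrement}. A preliminary deterministic estimate is $A_0 \le C\bar\delta_{\min}^{-1} k$, since $|h^\xi(x)| \le k$ for every $\xi \in \gO_{N,k}$ and $\sum_{x=1}^{N-1} \bar G(x) = G(1) - G(N) = O(1)$ by \eqref{shap:Gshift}. For the first event $\{\Tau_2 = t_{\delta/2}\}$, the supermartingale property of $e^{\bar\gl_1 t} A_t$ gives $\bE[A_{t_{\delta/2}}] \le A_0 e^{-\bar\gl_1 t_{\delta/2}} \le C \bar\delta_{\min}^{-1} k^{(1-\delta/2)/2}$, and Markov's inequality at the threshold $\bar\delta_{\min}^{-1} k^{1/2-2\psi}$ gives $\bP[\Tau_2 > t_{\delta/2}] \le C k^{2\psi - \delta/4}$, which vanishes thanks to $\psi < \delta/8$ and $k_N \to \infty$ (Assumption~\ref{assum:nparticles}).

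For the cascade $\{\Delta \Tau_i \le 2^{-i}(t_\delta - t_{\delta/2})\}_{i=3}^K$, I would condition on $\cF_{\Tau_{i-1}}$ and apply the strong Markov property together with the supermartingale inequality to obtain $\bE[A_{\Tau_{i-1}+s} \mid \cF_{\Tau_{i-1}}] \le e^{-\bar\gl_1 s}\, A_{\Tau_{i-1}}$. With $s_i := 2^{-i}(t_\delta - t_{\delta/2}) = \delta \log k_N/(2^{i+2}\bar\gl_1)$ one has $e^{-\bar\gl_1 s_i} = k_N^{-\delta/2^{i+2}}$, and Markov's inequality produces $\bP[\Delta \Tau_i > s_i \mid \cF_{\Tau_{i-1}}] \le k_N^{\psi - \delta/2^{i+2}}$. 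This is vanishing only in the moderate range $i < \log_2(\delta/\psi) - 2$; for larger indices, closer to $K = \lceil 1/(2\psi) \rceil$, the naive mean bound degrades, and I would strengthen it by tracking the predictable quadratic variation $\partial_t \langle A. \rangle_t \lesssim \bar\delta_{\max}/\bar\delta_{\min}^2$ of the Dynkin martingale (derived from \eqref{eigcond:minincre}--\eqref{eigcond:maxincre} together with the jump-size bound $|\Delta A_t| \le \bar G(n)/\bar\delta_{\min} = O(1)$), then invoking a Freedman-type exponential concentration inequality for pure-jump supermartingales to recover a super-polynomial tail, and finally taking a union bound over $i \in \lint 3, K\rint$.

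For the event $\{\Tau_\infty - \Tau_K \le \bar\Upsilon_N^{-1}\bar\delta_{\min}^{-2}\}$, at time $\Tau_K$ we have $A_{\Tau_K} \le \bar\delta_{\min}^{-1}$ since $K\psi \ge 1/2$, and the remaining task is a coalescence-time estimate starting from a state at the noise floor. The plan is to combine the supermartingale estimate $\bE[A_{\Tau_K + s}] \le \bar\delta_{\min}^{-1} e^{-\bar\gl_1 s}$ with a refined analysis of the jump structure of $A_t$: Assumption~\ref{assum:2} guarantees $\max_x c(x, x+1) \le 1/\bar\Upsilon_N$, while the number of disagreement sites between $\eta_t^\wedge$ and $\eta_t^\mu$ that can contribute to $\partial_t \langle A. \rangle_t$ is itself controlled by $A_t$ (since $\sum_x [h_t^\wedge(x) - h_t^\mu(x)] \lesssim A_t$ using $\bar G(x)/\bar\delta_{\min} = \Theta(1)$). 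Combining these with the asymptotics $\bar\gl_1 \asymp \bar\delta_{\min}^2 \asymp N^{-2}$ from \eqref{estimate:bargl1} and \eqref{eigcond:minincre}, and exploiting that $A_t$ lives in a lattice whose smallest positive element is $\Omega(1)$, a gambler's-ruin-type hitting-time argument produces the required bound on $\Tau_\infty - \Tau_K$.

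The main obstacle is the cascade step for indices $i$ close to $K$: the per-step time budget $2^{-i}(t_\delta - t_{\delta/2})$ shrinks exponentially in $i$ while the demanded contraction factor $k^\psi$ between consecutive thresholds is fixed, so the direct mean-decay argument fails as soon as $\delta/2^{i+2} < \psi$. Overcoming this obstacle hinges on the sharper bracket estimate mentioned above: when $A_t$ is already small, the number of disagreement sites driving jumps of $A_t$ is itself proportional to $A_t$, so $\partial_t \langle A. \rangle_t$ is much better than the worst-case $O(N)$ bound, and feeding this into a Freedman/Bernstein inequality supplies the super-polynomial concentration needed to close the union bound over the whole range of $i$.
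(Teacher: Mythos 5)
Your treatment of the first event $\{\Tau_2=t_{\delta/2}\}$ is correct and coincides with the paper's Lemma~\ref{lema:1ststoptime}. The gap is in the cascade step, and it is structural rather than technical: you propose to handle the indices $i$ where the mean-decay bound fails by combining an \emph{upper} bound on the bracket $\partial_t\langle A.\rangle_t$ with a Freedman-type concentration inequality. But concentration around the conditional mean cannot prove that $A$ reaches the level $b_i=\bar\delta_{\min}^{-1}k^{1/2-i\psi}$ within time $2^{-i}(t_\delta-t_{\delta/2})$, precisely because (as you yourself computed) for $i\gtrsim\log_2(\delta/\psi)$ the mean $\bE[A_{\Tau_{i-1}+s_i}\mid\cF_{\Tau_{i-1}}]\le a_i k^{-\delta 2^{-i-2}}$ stays far \emph{above} $b_i=a_ik^{-\psi}$; a sharp concentration statement would then assert that $A$ has \emph{not} yet hit $b_i$, the opposite of what is needed. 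The mechanism that actually closes the argument runs in the reverse direction on both estimates: one needs (i) an upper bound on the \emph{total} bracket accumulated before the hitting time of $b_i$ — this is Proposition~\ref{prop:surpmart}, an optional-stopping consequence for supermartingales, giving $\bP[\Delta_i\langle A\rangle\ge(a_i-b_i)^2u]\le 8u^{-1/2}$ — and (ii) a \emph{lower} bound on the instantaneous bracket growth, $\partial_t\langle A.\rangle_t\ge\bar\Upsilon_N A_t\bar\delta_{\min}/(3H(t)Q(h_t^\mu))$ (Lemma~\ref{lema:lwbdderAt}), obtained by counting admissible corner flips in the disagreement region. Dividing (i) by (ii) bounds the elapsed time. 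Your inequality ``number of disagreement sites $\lesssim A_t$'' points the wrong way for this purpose; what is required is a lower bound $\#\cD_t\gtrsim A_t\bar\delta_{\min}/(H(t)Q(h_t^\mu))$, and establishing it forces you to control the maximal monotone stretch $Q(h_t^\mu)$ of the equilibrium path (Lemma~\ref{lema:upbdQmu}) and the maximal weighted discrepancy $H(t)$ (Lemma~\ref{lema:highestH}, which itself needs Proposition~\ref{prop:tdmaxconf}); none of this appears in your plan.

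Two smaller but real errors: the jump amplitude of $A$ is \emph{not} $O(1)$ — it equals $\bar\delta_{\min}^{-1}\bar G(x)\le\bar\delta_{\min}^{-1}\bar\delta_{\max}\,r(x,x+1)$, which under Assumption~\ref{assum:1} can be as large as $C_\bbP e^{(\log N)^\upsilon}$ (the paper only proves it is $o(N)$ in \eqref{bd:jumpamplitude}, just enough for Proposition~\ref{prop:surpmart}); and $A$ does not live on a lattice with $\Omega(1)$ spacing — its minimal increment is of order $\bar\Upsilon_N\to 0$. For the final event the correct argument is again bracket-based, not a gambler's ruin: on the event $\cA_N$ one has $\Delta_\infty\langle A\rangle\le\bar\delta_{\min}^{-2}$, while \eqref{squarefeild} gives $\partial_t\langle A.\rangle_t\ge\bar\Upsilon_N$ as long as the two configurations disagree, whence $\Tau_\infty-\Tau_K\le\bar\Upsilon_N^{-1}\bar\delta_{\min}^{-2}$ deterministically on $\cA_N$.
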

\noindent
If Proposition \ref{prop:consecutstoptime} holds,  for  all $N$ sufficiently large we have
\begin{equation*}
T_1\;=\;\Tau_{\infty}\;\le\; t_{\delta/2}+\sum_{i=3}^{K}2^{-i}(t_{\delta}-t_{\delta/2})+\bar \Upsilon_N^{-1}  \bar\delta_{\min}^{-2}
\;\le\;
 t_\delta\,,
\end{equation*}
where we have used Lemma \ref{lema:eigfunincrement},    \eqref{estimate:bargl1} and  Assumptions \ref{assum:2} and \ref{assum:nparticles}    in the last  inequality. 
Then  we conclude the proof  of  Proposition \ref{prop:mixupLLN}.
 
 \subsection{The proof for $\cT_2=t_{\delta/2}$.}
The goal of this subsection if the following lemma.

\begin{lemma}\label{lema:1ststoptime} 
For all  $\epsilon>0$, all sufficiently small $\delta>0$ and $0<\psi<\delta/8$,  if  $N$ is sufficiently large, we have  
\begin{equation*}
\bP \left[\Tau_2>t_{\delta/2}\right]\;\le\; \epsilon/2\,.
\end{equation*}
\end{lemma}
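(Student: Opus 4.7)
The plan is a direct first-moment argument exploiting the near-eigenfunction structure of $A_t$. Because $A_t \ge 0$ a.s.\ (by Proposition~\ref{prop:attravtive}, $\eta_t^\wedge \ge \eta_t^\mu$ pathwise under the grand coupling, and $\bar G > 0$ by \eqref{difer:barG}), and $\Tau_2$ is the first time after $t_{\delta/2}$ at which $A_t$ drops below the threshold $\bar\delta_{\min}^{-1}k^{1/2 - 2\psi}$, we have the identity
\begin{equation*}
\{\Tau_2 > t_{\delta/2}\} \;=\; \bigl\{A_{t_{\delta/2}} > \bar\delta_{\min}^{-1}k^{1/2-2\psi}\bigr\}.
\end{equation*}
Thus Markov's inequality reduces the lemma to an exponential-decay estimate on $\bE[A_{t_{\delta/2}}]$.

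For that, I would use \eqref{genact:A}, namely $\cL_{N,k}A_t \le -\bar\gl_1 A_t$, to deduce by Dynkin's formula that $\phi(t) \colonequals \bE[A_t]$ satisfies $\phi'(t) \le -\bar\gl_1 \phi(t)$, and hence by Grönwall $\bE[A_{t_{\delta/2}}] \le e^{-\bar\gl_1 t_{\delta/2}}\bE[A_0] = k^{-(1+\delta/2)/2}\bE[A_0]$. To bound the starting value, note that
\begin{equation*}
\bE[A_0] \;=\; \bar\delta_{\min}^{-1}\bigl[\bF(\wedge) - \mu_{N,k}(\bF)\bigr] \;=\; \bar\delta_{\min}^{-1}\bF(\wedge),
\end{equation*}
since $\mu_{N,k}(h^\xi(x))=0$ for every $x$ by the centering built into \eqref{heighfun}. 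Using $0 \le h^\wedge(x) = \min(x,k) - kx/N \le k$ and the telescoping sum $\sum_{x=1}^{N-1}\bar G(x) = G(1)-G(N) \le 2 + o(1)$ (consequence of \eqref{shap:Gshift}), we get $\bF(\wedge) \le 3k$ for all $N$ large, so $\bE[A_0] \le 3k\,\bar\delta_{\min}^{-1}$.

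Combining everything via Markov, the $\bar\delta_{\min}$ factors cancel and we obtain
\begin{equation*}
\bP[\Tau_2 > t_{\delta/2}] \;\le\; \frac{3k\,\bar\delta_{\min}^{-1}\, k^{-(1+\delta/2)/2}}{\bar\delta_{\min}^{-1}k^{1/2-2\psi}} \;=\; 3\,k^{-\delta/4 + 2\psi}.
\end{equation*}
Since $\psi < \delta/8$, the exponent is strictly negative, and Assumption~\ref{assum:nparticles} gives $k_N\to\infty$, so the right-hand side vanishes and is $\le \epsilon/2$ once $N$ is large enough. There is essentially no obstacle here: the result follows from the monotone coupling, the Dynkin/Grönwall passage from the pointwise bound $\cL_{N,k}A_t \le -\bar\gl_1 A_t$ to decay in expectation, and the fortuitous cancellation of the $\bar\delta_{\min}$ scaling between the Markov denominator and the prefactor in $\bE[A_0]$; no control on $\bar\delta_{\min}$ beyond its definition is required for this step.
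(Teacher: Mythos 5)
Your proposal is correct and follows essentially the same route as the paper: the identity $\{\Tau_2>t_{\delta/2}\}=\{A_{t_{\delta/2}}>\bar\delta_{\min}^{-1}k^{1/2-2\psi}\}$, the decay $\bE[A_t]\le e^{-\bar\gl_1 t}\bE[A_0]$ from $\cL_{N,k}A_t\le-\bar\gl_1 A_t$, the vanishing of $\bE[\bF(\eta_0^\mu)]$ by the centering of the height function, an $O(k)$ bound on $\bF(\wedge)$, and Markov's inequality yielding $C k^{-\delta/4+2\psi}\to 0$. The only (harmless) difference is that you bound $\bF(\wedge)$ by the one-line estimate $0\le h^\wedge\le k$ plus telescoping of $\bar G$, whereas the paper obtains its constant $6k$ by a longer summation-by-parts computation.
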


For the proof of the lemma above, we first provide an upper bound on $\bE\left[A_0 \right]$.
Recalling \eqref{heighfun}, we have
 \begin{equation}\label{height:mean}
 \bE \left[ h_t^\mu(x) \right]\;=\; 0\,, \quad \forall \, t \ge 0,\, \forall\, x \in \lint 1, N \rint\,,
 \end{equation}
 and
 \begin{equation*}
 h_0^\wedge (x)\;=\;
 \begin{cases}
 \left(1- \frac{k}{N} \right) x & \text{ if } x \le k\,,\\
 k-\frac{k}{N}x & \text{ if } x > k\,,
 \end{cases}
 \end{equation*}
 and then
 \begin{equation}\label{upbd:inicioF}
 \begin{aligned}
 \bF(\eta_0^\wedge)\;&=\;\sum_{x=1}^k  \left(1- \frac{k}{N} \right) x \left[ G(x)-G(x+1) \right]
 +
 \sum_{x=k+1}^N  \left(k-\frac{k}{N}x \right) \left[ G(x)-G(x+1) \right]\,.
 \end{aligned}
 \end{equation}
 Moreover, using $-2 \le G(x) \le 1$ by \eqref{shap:Gshift} and the fact that $G$ is strictly monotone decreasing, 
  we have
 \begin{equation*}
 \begin{aligned}
 &\sum_{x=1}^k  \left(1- \frac{k}{N} \right) x \left[ G(x)-G(x+1) \right]\\
  \;=&\;
 \left(1- \frac{k}{N} \right) \left[ G(1)+ \sum_{x=2}^k  \left[xG(x)-(x-1)G(x) \right]-kG(k+1) \right]\\
  \;=&\;
  \left(1- \frac{k}{N} \right)
  \sum_{x=1}^k  \left(G(x)-G(k+1) \right) \\
   \;\le& \; 3 k\,,
 \end{aligned}
 \end{equation*}
and  
 \begin{equation*}
 \begin{aligned}
 &\sum_{x=k+1}^N  \left(k-\frac{k}{N}x \right) \left[ G(x)-G(x+1) \right]\\
 =\;&
 k \left[G(k+1)-G(N+1) \right]-\frac{k}{N} \left[ (k+1)G(k+1)+ \sum_{x=k+2}^N  G(x)-NG(N+1) \right]\\
 \le\;&
 3k-\frac{k}{N} \left[(k+1)\left(G(k+1)-G(N+1) \right)+\sum_{x=k+2}^N  \left( G(x)-G(N)\right) \right]\\
 \le\;& 
 3k\,.
 \end{aligned}
 \end{equation*}
Therefore, we have for all $N$ sufficiently large
\begin{equation}\label{upbd:areat0}
\bF(\eta_0^\wedge)\;\le\; 6k\,.
\end{equation}

\begin{proof}[Proof of Lemma \ref{lema:1ststoptime}]
By \eqref{genact:A} we have
\begin{equation*}\label{der:area} 
\frac{d}{\dd t}\bE\left[  A_t\right]\;=\;\bE \left[ \cL_{N,k}A_t \right]\;\le\;-\bar\gl_1 \bE\left[ A_t \right]\,,
\end{equation*}
and thus
\begin{equation}\label{ineq:derarea}
\bE \left[ A_t\right]\;\le\; e^{-\bar\gl_1 t} \bE \left[ A_0\right]\;\le\;6 k \cdot e^{-\bar\gl_1 t} \bar \delta_{\min}^{-1} 
\end{equation}
where the last inequality is by  \eqref{height:mean} and \eqref{upbd:areat0}.
By  Markov's inequality and \eqref{ineq:derarea}, we have
\begin{equation}
\bP \left[\cT_2>t_{\delta/2} \right]
\;=\;
\bP \left[A_{t_{\delta/2}} > \bar \delta_{\min}^{-1} k^{\frac{1}{2}-2\psi} \right]
\;\le\; \bar \delta_{\min}
 k^{-\frac{1}{2}+2\psi} \bE \left[ A_{t_{\delta/2}} \right]
\;\le\; 6k^{-(\delta/4-2\psi)}\,.
\end{equation}
We conclude the proof by  $\liminf_{N \to \infty}k_N=\infty$. 
 \end{proof}

 \subsection{The estimation of $\langle  A.\rangle_{\cT_i}-\langle  A.\rangle_{\cT_{i-1}}$}
 Our goal in  this subsection is to prove that for all $i\in \lint 3, K \rint$, $\langle A. \rangle_{\Tau_i}-\langle A. \rangle_{\Tau_{i-1}}$ is small. 
 Write
\begin{equation}
\begin{aligned}
\Delta_i \langle A \rangle &\;\colonequals\;\langle A. \rangle_{\Tau_i}- \langle A. \rangle_{\Tau_{i-1}}, \; \forall \,i\, \in \lint 3, K \rint\,,\\
\Delta_{\infty} \langle A \rangle &\; \colonequals\; \langle A. \rangle_{\Tau_{\infty}}- \langle A. \rangle_{\Tau_K}\,.
\end{aligned}
\end{equation}

\begin{proposition}[{ \cite[Proposition 29]{labbe2018mixing}}] \label{prop:surpmart}
Let $(\mathbf{M}_t)_{t\geq 0}$ be a pure-jump supermartingale with bounded  jump rates and jump amplitudes, and $\mathbf{M}_0\leq a$ almost surely where $a \ge 1$. Let $\langle \mathbf{M}. \rangle$, with an abuse of notation, denote the predictable bracket associated with the martingale $\overline{\mathbf{M}}_t=\mathbf{M}_t+I_t$ where $I$ is the compensator of $\mathbf{M}$. 
Given $b \in \mathbb{R}$ and $b\leq a$, we set $$ \tau_b \; \colonequals\;\inf\{t \geq 0 :\, \mathbf{M}_t\leq b\}\,.$$

\noindent
 If the
amplitudes of the jumps of $(\mathbf{M}_t)_{t\geq 0}$ are bounded above by $a-b$, for
any $u\geq 0$, we have
\begin{equation}
\bP\left[\langle \mathbf{M}.\rangle_{\tau_b}\geq (a-b)^2u \right]\;\le\; 8u^{-1/2}\,.
\end{equation}

\end{proposition}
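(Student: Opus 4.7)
The plan is to reduce to the martingale case and then combine Doob's maximal inequality with the $L^2$-bracket identity, balancing the two sources of error through an auxiliary truncation level. By Doob--Meyer we decompose $\mathbf{M}_t = \overline{\mathbf{M}}_t - I_t$ with $I$ predictable, non-decreasing, and $\overline{\mathbf{M}}$ a martingale; the pure-jump assumption forces $I$ to be absolutely continuous, so the jumps of $\overline{\mathbf{M}}$ coincide with those of $\mathbf{M}$ and are bounded by $L := a - b$. Since $I \ge 0$, we have $\overline{\mathbf{M}} \ge \mathbf{M}$ pointwise, so $\bar{\tau}_b := \inf\{t : \overline{\mathbf{M}}_t \le b\} \ge \tau_b$ and the bracket is non-decreasing; hence it suffices to prove the bound with $\tau_b$ replaced by $\bar{\tau}_b$. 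Shifting, we may further assume $b = 0$, so that $\overline{\mathbf{M}}_0 \le L$ and $\overline{\mathbf{M}}_{\bar{\tau}_0} \in [-L, 0]$ automatically from the jump-size control.

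Introduce a truncation level $K \ge L$, the stopping time $\sigma_K := \inf\{t : \overline{\mathbf{M}}_t \ge K\}$, and set $S := \bar{\tau}_0 \wedge \sigma_K$. Doob's maximal inequality applied to the non-negative martingale $Y_t := \overline{\mathbf{M}}_{t \wedge \bar{\tau}_0} + L$, which starts at most at $2L$, yields $\bP[\sigma_K \le \bar{\tau}_0] \le 2L/(K + L)$. The stopped martingale $\overline{\mathbf{M}}^{S}$ takes values in $[-L, K + L]$, so optional stopping applied to $\overline{\mathbf{M}}^2 - \langle \overline{\mathbf{M}}.\rangle$ gives $\bE[\langle \overline{\mathbf{M}}.\rangle_S] = \bE[\overline{\mathbf{M}}_S^2] - \overline{\mathbf{M}}_0^2$, and splitting according to $\{S = \sigma_K\}$ (contributing at most $(K+L)^2$ to $\bE[\overline{\mathbf{M}}_S^2]$, with weight $\le 2L/(K+L)$) and $\{S = \bar{\tau}_0\}$ (contributing at most $L^2$) produces $\bE[\langle \overline{\mathbf{M}}.\rangle_S] \le 2L(K+L) + L^2$. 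Since $\langle \overline{\mathbf{M}}.\rangle_{\bar{\tau}_0} = \langle \overline{\mathbf{M}}.\rangle_S$ on $\{\sigma_K > \bar{\tau}_0\}$, Markov's inequality and a union bound give
\[
\bP\bigl[\langle \overline{\mathbf{M}}.\rangle_{\bar{\tau}_0} \ge L^2 u\bigr] \;\le\; \frac{2L}{K+L} \;+\; \frac{2L(K+L) + L^2}{L^2 u}\,.
\]

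Optimizing in $K$ around $L\sqrt{u}$ balances both terms to a bound of order $u^{-1/2}$, with careful bookkeeping yielding the claimed constant $8 u^{-1/2}$ for $u \ge 1$ (the case $u < 1$ being trivial since $8 u^{-1/2} \ge 8$). The main difficulty is that $\overline{\mathbf{M}}$ is unbounded from above, which is precisely why the truncation at level $K$ is essential: without it, the $L^2$-bracket identity is useless as $\bE[\overline{\mathbf{M}}_{\bar{\tau}_0}^2]$ may well be infinite. The resulting $u^{-1/2}$ decay mirrors the first-passage-time asymptotics of Brownian motion to a level at distance $L$ from its start, which is what a Dambis--Dubins--Schwarz time change would yield directly in the continuous-martingale setting.
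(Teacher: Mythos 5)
The paper does not prove this proposition; it is imported verbatim from \cite[Proposition 29]{labbe2018mixing}, so there is no internal proof to compare against. Your argument is a correct, essentially self-contained reconstruction of the standard proof (and it follows the same architecture as the cited source: reduce to the martingale $\overline{\mathbf{M}}=\mathbf{M}+I$, truncate at a high level $K$, control $\bP[\sigma_K\le\bar\tau_b]$ by Doob's maximal inequality for the non-negative martingale $\overline{\mathbf{M}}_{\cdot\wedge\bar\tau_b}+L$, bound $\bE[\langle\overline{\mathbf{M}}.\rangle_S]$ via optional stopping of $\overline{\mathbf{M}}^2-\langle\overline{\mathbf{M}}.\rangle$, and optimize $K\asymp L\sqrt{u}$). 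Two small points of bookkeeping deserve a line in a written-up version. First, your choice $K+L=L\sqrt{u}$ only satisfies $K\ge L$ when $\sqrt{u}\ge 2$, and your dismissal of "$u<1$" as trivial is not quite the right cut: the correct observation is that $8u^{-1/2}\ge 1$ for all $u\le 64$, so the inequality is vacuous there and the constraint $K\ge L$ is automatic in the only regime that matters. Second, the hypotheses give no lower bound on $\mathbf{M}_0$, so $Y_0=\overline{\mathbf{M}}_0+L$ need not be non-negative on $\{\mathbf{M}_0\le b\}$; one should simply note that $\tau_b=0$ and $\langle\mathbf{M}.\rangle_{\tau_b}=0$ on that event and run the argument conditionally on $\{\mathbf{M}_0>b\}$. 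Neither point affects the validity of the proof.
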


 Now we apply Proposition \ref{prop:surpmart} to prove that the probability of the event 
\begin{equation*}
\mathcal{A}_N\;\colonequals\;\left\{ \forall\, i\in \lint 3, K \rint, \; \Delta_i \langle A \rangle \;\le\; \bar \delta_{\min}^{-2} k^{1-2(i-1)\psi+\frac{1}{2}\psi}\right\} \bigcap \left\{ \Delta_{\infty} \langle A \rangle\;\le\; \bar  \delta_{\min}^{-2} \right\}
\end{equation*}
 is almost one, which is the following lemma.
 
\begin{lemma} \label{lema:ANfullmass} 
For $K=\lceil 1/(2\psi) \rceil>1/(2\psi)$, we have
\begin{equation}
\lim_{N \to \infty} \bP \left[  \cA_N \right]\;=\;1\,.
\end{equation}
\end{lemma}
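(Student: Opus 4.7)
The strategy is to apply Proposition \ref{prop:surpmart} iteratively to the supermartingale $(A_t)_{t\ge 0}$ on each interval $[\Tau_{i-1},\Tau_i]$ (and finally on $[\Tau_K,\Tau_\infty]$), using the strong Markov property at each $\Tau_{i-1}$ so that the shifted process $(A_{\Tau_{i-1}+t})_{t\ge 0}$ is again a pure-jump supermartingale, on which the proposition may be invoked.

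First, I would verify the structural hypotheses. Each jump of $A_t$ arises from a single corner flip in either the $\wedge$-dynamics or the $\mu$-dynamics, which by the definition of $\bF$ and $A_t$ changes the value of $A_t$ by $\pm \bar\delta_{\min}^{-1}\bar G(x)$ at rate $c(x,x+1)$. Combining Theorem~\ref{th:gapshapeder}, \eqref{shap:Gshift} and Lemma~\ref{lema:eigfunincrement} one sees $|\bar G(x)|=O(1/N)$ and $\bar\delta_{\min}^{-1}=O(N)$, so the jump amplitudes are uniformly $O(1)$.

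For $i\in\lint 3,K\rint$, setting $a=\bar\delta_{\min}^{-1}k^{1/2-(i-1)\psi}$ and $b=\bar\delta_{\min}^{-1}k^{1/2-i\psi}$, the definition of $\Tau_{i-1}$ and the $O(1)$ jump bound give $A_{\Tau_{i-1}}\le a$ (up to negligible overshoot absorbed in constants), while $\Tau_i$ is the first time $A_t\le b$. Since $(a-b)^2\le\bar\delta_{\min}^{-2}k^{1-2(i-1)\psi}$ and for $N$ large the jumps are dominated by $a-b$, Proposition~\ref{prop:surpmart} applied with $u=k^{\psi/2}$ yields
\begin{equation*}
\bP\!\left[\Delta_i\langle A\rangle\ge \bar\delta_{\min}^{-2}k^{1-2(i-1)\psi+\psi/2}\,\Big|\,\cF_{\Tau_{i-1}}\right]\;\le\; 8\,k^{-\psi/4}.
\end{equation*}
For the final interval, the choice $K=\lceil 1/(2\psi)\rceil>1/(2\psi)$ is crucial: setting $\alpha\colonequals K\psi-1/2>0$, one has $A_{\Tau_K}\le\bar\delta_{\min}^{-1}k^{-\alpha}$, so with $a=\bar\delta_{\min}^{-1}k^{-\alpha}$, $b=0$ and the choice $u=k^{2\alpha}$, Proposition~\ref{prop:surpmart} gives $\bP[\Delta_\infty\langle A\rangle\ge\bar\delta_{\min}^{-2}]\le 8\,k^{-\alpha}$. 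A union bound over the $K$ events, combined with the fact that $K$ depends only on $\psi$ and $k=k_N\to\infty$ by Assumption~\ref{assum:nparticles}, concludes that $\bP[\cA_N]\to 1$.

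The main technical point I expect to require care is the verification, at each stopping time, that the strong Markov property legitimately restores the supermartingale structure needed to apply Proposition~\ref{prop:surpmart}, and that the initial condition $A_{\Tau_{i-1}}\le a$ holds up to an $O(1)$ overshoot which can be absorbed into the exponent $\psi/2$ (this is why one takes $u=k^{\psi/2}$ rather than a bare constant). The choice $K>1/(2\psi)$ strict is what makes the last interval work, since otherwise the target $\bar\delta_{\min}^{-2}$ would not be attainable with probability tending to one.
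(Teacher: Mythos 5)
Your proposal follows essentially the same route as the paper: apply Proposition~\ref{prop:surpmart} to the restarted supermartingale $(A_{t+\Tau_{i-1}})_{t\ge 0}$ on each intermediate interval with $a_i=\bar\delta_{\min}^{-1}k^{1/2-(i-1)\psi}$, $b_i=\bar\delta_{\min}^{-1}k^{1/2-i\psi}$ and $u\asymp k^{\psi/2}$, then once more on $[\Tau_K,\Tau_\infty]$ with $b=0$ and $u=k^{2K\psi-1}$, and finish with a union bound over the constant number $K$ of events; the exponents and probability bounds you obtain match the paper's.

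One step of your verification is wrong as stated, though reparable. The jump amplitudes of $A_t$ are \emph{not} uniformly $O(1)$: a flip at $x$ changes $A_t$ by $\bar\delta_{\min}^{-1}\bar G(x)=\bar\delta_{\min}^{-1}r(x,x+1)\,(c\nabla G)(x+1)$, and while $\bar\delta_{\min}^{-1}=O(N)$ and $|(c\nabla G)|\le\bar\delta_{\max}=O(1/N)$, the resistance $r(x,x+1)$ is only controlled by $o(N)$ via \eqref{LLN} (and by $C_\bbP e^{(\log N)^\upsilon}$ via Assumption~\ref{assum:1}), not by a constant. Also, \eqref{shap:Gshift} gives only a uniform $o(1)$ error on $G$, so it does not by itself yield $|\bar G(x)|=O(1/N)$. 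The correct bound on the jump amplitude is therefore $\bar\delta_{\min}^{-1}\bar\delta_{\max}\max_x r(x,x+1)=o(N)$, which is what the paper uses. This still suffices: for $3\le i\le K$ one has $a_i-b_i\ge \bar\delta_{\min}^{-1}(1-k^{-\psi})\gtrsim N$, and on the last interval $a_\infty\ge\bar\delta_{\min}^{-1}k^{-\psi}\gtrsim N^{1-\psi}$, which still dominates $C e^{(\log N)^\upsilon}$ since $\upsilon<1$ and $\psi<1$ — but this comparison must be made explicitly (it is exactly where Assumption~\ref{assum:1} enters), rather than waved away by an $O(1)$ claim. Your worry about overshoot at $\Tau_{i-1}$ is unnecessary: by definition of the stopping time, $A_{\Tau_{i-1}}\le a_i$ holds exactly.
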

\begin{proof}
We prove that $\mathcal{A}_N^{\complement}$ has almost zero mass. Note that  amplitudes of the jumps of $(A_t)_{t \ge 0}$ are bounded above by

\begin{equation}\label{bd:jumpamplitude}
\begin{aligned}
\bar \delta_{\min}^{-1} \max_{1\le x \le N} \left[ G(x)-G(x+1)\right]
\;&=\; \bar \delta_{\min}^{-1} \max_{1\le x \le N} r(x,x+1) c(x,x+1)  \left[ G(x)-G(x+1)\right]\\
\;&\le \;
\bar \delta_{\min}^{-1} \max_{1\le x <N} r(x,x+1) \bar \delta_{\max} \;  =\; o(N)\,,
\end{aligned}
\end{equation}
where we have used Lemma \ref{lema:eigfunincrement} and the assumption \eqref{LLN} in the last equality.

For $3 \le i \le K $, 
we apply Proposition \ref{prop:surpmart} to $(A_{t+\Tau_{i-1}})_{t\geq 0}$ with $a_i \colonequals \bar \delta_{\min}^{-1} k^{\frac{1}{2}-(i-1)\psi}$, $b_i \colonequals \bar \delta_{\min}^{-1} k^{\frac{1}{2}-i\psi}$ and $u_i \colonequals k^{2\psi+\frac{1}{2} \psi}(k^{\psi}-1)^{-2}$, where $a_i,b_i,u_i$ are chosen to satisfy
\begin{equation*}
 \left(\bar \delta_{\min}^{-1} k^{\frac{1}{2}-(i-1)\psi}-\bar \delta_{\min}^{-1}k^{\frac{1}{2}-i\psi} \right)^2u_i \;=\;\bar \delta_{\min}^{-2}k^{1-2(i-1)\psi+\frac{1}{2}\psi}\,.
\end{equation*}
Note that 
\begin{equation*}
a_i-b_i\; = \;
\bar \delta_{\min}^{-1} k^{\frac{1}{2}-(i-1)\psi} -\bar  \delta_{\min}^{-1} k^{\frac{1}{2}-i\psi} 
\;=\;
\bar \delta_{\min}^{-1} k^{\frac{1}{2}-(i-1)\psi} \left(1-k^{-\psi} \right)
\;\ge\;
\bar \delta_{\min}^{-1} \;\ge\; C(\delta)^{-1}N \,,
\end{equation*}
where  we have used Lemma  \ref{lema:eigfunincrement} in the last inequality. 
Thus, the assumption in 
Proposition \ref{prop:surpmart} is satisfied.
Then  we obtain for every $ i \in \lint 3, K\rint$,  
\begin{equation}\label{applicat-surmarting}
\bP \left[\Delta_i \langle A \rangle \;\ge\; \left(\bar \delta_{\min}^{-1} k^{\frac{1}{2}-(i-1)\psi}-\bar \delta_{\min}^{-1} k^{\frac{1}{2}-i\psi} \right)^2 u_i \right]
\;\le\; 8 u_i^{-\frac{1}{2}}, 
\end{equation}
which vanishes as $k$ tends to infinity.

Taking $a_{\infty} \colonequals \bar \delta_{\min}^{-1} k^{\frac{1}{2}-K\psi}$,  $b_{\infty} \colonequals 0$ and $u_{\infty} \colonequals k^{2K\psi-1}$, 
the assumption in Proposition \ref{prop:surpmart} is satisfied by \eqref{bd:jumpamplitude} and 
\begin{multline*}
a_{\infty}-b_{\infty}\;=\;
\bar \delta_{\min}^{-1} k^{\frac{1}{2}-K\psi}
\;=\; \bar
\delta_{\min}^{-1} k^{\frac{1}{2}-\left\lceil \frac{1}{2 \psi} \right\rceil  \psi}\\
\;\ge\;
\bar \delta_{\min}^{-1}k^{-\psi}\; \ge \; \bar \delta_{\min}^{-1} N^{-\psi} 
\; \ge\; \bar \delta_{\min}^{-1}  \bar \delta_{\max}  \max_{1\le x<N} r(x,x+1)
\end{multline*}
where  we have used
 Assumption \ref{assum:2},   $\psi< 1$ and  Lemma \ref{lema:eigfunincrement}  in the last inequality.
Applying Proposition \ref{prop:surpmart} to $(A_{t+\Tau_K})_{t\geq 0}$ 
we obtain
\begin{equation}\label{Ainfty}
\bP \left[\Delta_{\infty} \langle A \rangle \ge 
\bar \delta_{\min}^{-2}
\right] \le 8 u_{\infty}^{-\frac{1}{2}}\,,
\end{equation}
where $a_{\infty}, b_{\infty}, u_{\infty}$ are chosen to satisfy 
\begin{equation*}
(a_{\infty}-b_{\infty})^2u_{\infty}
\;=\;
 \bar \delta_{\min}^{-2} k^{1-2K\psi}
 k^{2K\psi-1}
\; = \;\bar \delta_{\min}^{-2}\,. 
\end{equation*}
Moreover, the r.h.s.\ in \eqref{Ainfty}
 tends to zero as $k$ tends to infinity due to $K=\lceil 1/(2\psi) \rceil>1/(2\psi)$. 
Since $K$ is a constant, we obtain 
\begin{equation*}
\lim_{N \to \infty} \bP \left[ \mathcal{A}_N^{\complement} \right]\;=\;0\,.
\end{equation*}
\end{proof}

\subsection{Comparing  $\Tau_i-\Tau_{i-1}$ with  $\Delta_i \langle A \rangle$.}

\begin{figure}[h]
\centering
  \begin{tikzpicture}[scale=.4,font=\tiny]
   \draw (25,4) -- (25,-1) -- (52,-1);
   \draw[color=black, line width=0.45mm](25,-1)--(26,0); \draw[color=black, line width=0.45mm](29,1)--(30,0)--(31,-1)--(33,1);
    \draw[color=black, line width=0.45mm](41,1)--(42,0);
   \draw[color=black, line width=0.45mm](49,1) -- (50,0)--(51, -1);
    \draw[color=blue] (26,0) -- (27,-1) -- (28,0) -- (29,1);

      \draw[color=blue]  (33,1)-- (34,0) -- (35,-1) -- (36,0) -- (37,-1) -- (38,-2) -- (39,-1) -- (40,0) -- (41,1);

      \draw[color=blue] (42,0) -- (43,-1) -- (44,0) -- (45,-1) -- (46,0)--(47,-1)--(48,0)--(49,1);
   \draw[color=red](26,0)--(27,1)--(28,2)--(29,1);     
   \draw[color=red](33,1)--(37,5)--(41,1);
    \draw[color=red](42,0)--(44,2)--(46,0)-- (47,1) -- (48,2) -- (49,1); 
     \node[below] at (26.2,-1.3) {$a_1$};
      \node[below] at (29.2,-1.3) {$b_1$};
     \node[below] at (33.2,-1.3) {$a_2$};
     \node[below] at (49.2,-1.3) {$b_2$};
    \foreach \x in {25,...,51} {\draw (\x,-1.3) -- (\x,-1);}
    \draw[fill] (25,-1) circle [radius=0.1];
    \draw[fill] (51,-1) circle [radius=0.1];   
    \node[below] at (25,-1.3) {$0$};
    \node[below] at (51,-1.3) {$N$};
    \node[red,above] at (39, 4){$h_t^{\wedge}$};
    \node[blue,above] at (38, -1.2){$h_t^{\mu}$};
    \draw[thick,->] (25,-1) -- (25,4) node[anchor=north west]{y};
    \draw[thick,->] (25,-1) -- (52,-1) node[anchor=north west]{x};
  \end{tikzpicture}
  \caption{\label{fig:lowerboundforderivativeofA} In this figure, $h_t^{\wedge}$ consists of the red line segments and black thick line segments, while $h_t^{\mu}$ consists of the blue line segments and black thick line segments. Moreover, $\mathcal{B}_t= \lint  a_1, b_1\rint \cup \lint a_2, b_2\rint$, $\#(\mathcal{D}_t \cap \lint a_1, b_1\rint)=3$, and $\#(\mathcal{D}_t \cap \lint a_2, b_2\rint)=13$. In $\lint a_2, b_2\rint$, the monotone segments of $h_t^{\mu}$ are $\lint a_2, a_2+1\rint$, $\lint a_2+1, a_2+3\rint$, $\lint a_2+3, a_2+5\rint$, and so on as shown in the figure.}
\end{figure}

 For the generator  $\bbL_{N, k}$ corresponding to the graphical construction in Subsection \ref{subsec:graphconstr},  for $\xi, \xi' \in \gO_{N,k}$ with  $\xi \le \xi'$ and $$\bbF(\xi, \xi') \colonequals \bar \delta_{\min}^{-1} \left[ \bF(\xi')-\bF(\xi)\right]$$ we have  
 \begin{equation}\label{squarefeild}
 \begin{aligned}
&\left( \bbL_{N,k} {\bbF}^2 \right)\left(\xi, \xi'\right)- 2 \left( \bbF \bbL_{N, k} \bbF \right)\left(\xi, \xi'\right)\\
\;=\;&
\sum_{x=1}^{N-1} c(x,x+1) \left[  \bbF\left(\xi\circ \tau_{x,x+1}, \xi' \circ \tau_{x,x+1}\right)- \bbF\left(\xi,\xi'\right) \right]^2\\
\;=\;&
\sum_{x=1}^{N-1} r(x,x+1) \left[ 
c(x,x+1) \left[  \bbF\left(\xi \circ \tau_{x,x+1}, \xi' \circ \tau_{x,x+1}\right)- \bbF\left(\xi,\xi'\right) \right] \right]^2\\
\ge\; &
\sum_{x= 1}^{N-1} r(x,x+1) \ind_{\{ \bbF\left(\xi \circ \tau_{x,x+1}, \xi' \circ \tau_{x,x+1}\right) \neq  \bbF\left(\xi,\xi'\right) \}}\,,
\end{aligned}
 \end{equation}
 where we have used the definition of $\bar \delta_{\min}$ in  \eqref{def:bardelta} in the last inequality.
 By Assumption \eqref{assum:2} and \eqref{squarefeild}, we have for all $t< \Tau_{\infty}$,
\begin{equation*}
\partial_t \langle A. \rangle_t \;\ge\; \bar \Upsilon_N\,,
\end{equation*}
and thus
\begin{equation*}
\Delta_{\infty} \langle A \rangle\;=\;\int_{\Tau_K}^{\Tau_{\infty}} \partial_t \langle A. \rangle \dd t \;\ge\; \int_{\Tau_K}^{\Tau_{\infty}} \bar \Upsilon_N \dd t \;=\; \bar \Upsilon_N \left(  \Tau_{\infty}-\Tau_K \right)\,.
\end{equation*}
Thus, if the event $\mathcal{A}_N$ holds,  we have 
\begin{equation*}
\Tau_{\infty}-\Tau_K\;\le\; \bar \Upsilon_N^{-1} \Delta_{\infty}\langle  A \rangle\; \le\;  \bar \Upsilon_N^{-1}  \bar\delta_{\min}^{-2} \;=o(t_{\delta}-t_{\delta/2})\,,
\end{equation*}
where we have used Assumption \ref{assum:2} and Assumption \ref{assum:nparticles} in the last equality.

Now we estimate the  increment $\Tau_i-\Tau_{i-1}$ for $ 3 \leq i\leq K$ by comparing with 
\begin{equation*}
\langle A. \rangle_{\Tau_i}-\langle A. \rangle_{\Tau_{i-1}}\;=\;\Delta_i \langle A. \rangle\,.
\end{equation*}
We first provide a lower bound on $\partial_t \langle A. \rangle$, based on  (a) 
the maximal contribution among all the coordinates $x\in \lint 1, N\rint$ in the definition of $A_t$; 
  (b) the amount of admissible jumps in $h_t^{\mu}$ or $h_t^{\wedge}$ that can change the value of $A_t$. Concerning (a),  
set
\begin{equation}\label{def:H}
H(t)\colonequals\max_{x \in \lint 1,\, N \rint}  \bar G(x)\left[h_t^{\wedge}(x)- h_t^{\mu}(x)\right]\,.
\end{equation}
Concerning (b), 
we define
\begin{equation*}
\mathcal{B}_t\;\colonequals\;\left\{  x\in \lint 1, N-1 \rint :\, \exists y \in \lint x-1, x+1 \rint, \, h_t^{\wedge}(y) \neq  h_t^{\mu}(y) \right\}\,,
\end{equation*}
 and 
 \begin{equation*}
 \mathcal{D}_t \;\colonequals\; \left\{x\in \mathcal{B}_t:\, 
x \text{ is a local extremum of } h_t^\mu 
  \right\}\,.
 \end{equation*} 
 By \eqref{squarefeild} and the fact that each point in $\cD_t$ corresponds to an admissible jump,  we have
\begin{equation*}
\partial_t \langle A.\rangle_t \;\ge\; \bar \Upsilon_N \# \mathcal{D}_t\,,
\end{equation*}
where we have used Assumption \ref{assum:2}. 
Let $\lint a, b \rint$ denote the horizontal coordinates of a maximal connected component of $\mathcal{B}_t$, for which we refer to Figure \ref{fig:lowerboundforderivativeofA} for illustration. Since $h_t^{\mu}$ can  not be monotone in the entire domain $\lint a, b \rint$, we have
\begin{equation}\label{upbd:trAbracket}
\# (\mathcal{D}_t \cap  \lint a, b \rint) \;\ge\, 1\,.
\end{equation}
In order to provide a larger lower bound on the r.h.s.\ above when  $b-a$ is large, 
 we need an upper bound on the maximal monotone segment of $h_t^{\mu}$.  
For $\xi \in \gO_{N,k}$, we define
\begin{equation}\label{def:Q12}
\begin{aligned}
Q_1(h^\xi)\;&\colonequals\;
\max \left\{n\geq 1: \, \exists i \in \lint 0, N-n\rint,\, \forall\, x \in \lint i+1, i+n \rint, h^\xi(x)-h^\xi(x-1)=1- \frac{k}{N} \right\}\,,\\
Q_2(h^\xi)\;& \colonequals\;\max\left\{n\geq 1;,\, \exists i \in \lint 0, N-n \rint,\, \forall x \in \lint  i+1, i+n\rint, h^\xi(x)-h^\xi(x-1)=-\frac{k}{N}\right\}\,,
\end{aligned}
\end{equation}
and
\begin{equation}\label{maxmontonesegm}
 Q(h^\xi)\;\colonequals\;\max \left(Q_1(h^\xi), Q_2(h^\xi)\right)\,.
\end{equation}

\begin{lemma} \label{lema:lwbdderAt}
 Assuming \eqref{assumpweak:lwbdresist}, we have
\begin{equation*}
\partial_t \langle A. \rangle \; \ge\; \bar \Upsilon_N \max   \left( 1,\,     \frac{A_t \bar\delta_{\min}}{3H(t)Q\left(h_t^\mu\right)} \right)\,,
\end{equation*}
where $\bar \delta_{\min}$ and $H(t)$ are defined in \eqref{def:bardelta} and  \eqref{def:H} respectively.
\end{lemma}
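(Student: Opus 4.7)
The plan is to lower-bound $\#\mathcal{D}_t$ by two complementary quantities and then insert them into the inequality
\[
\partial_t \langle A.\rangle_t \;\geq\; \bar\Upsilon_N \,\#\mathcal{D}_t,
\]
which is the combination of the carré du champ identity \eqref{squarefeild} with Assumption~\ref{assum:2}: every local extremum of $h_t^\mu$ lying in $\mathcal{B}_t$ corresponds to an admissible swap whose resistance-weighted contribution to $\partial_t\langle A.\rangle_t$ is at least $\bar\Upsilon_N$, exactly as the author already established just before stating the lemma.

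For the easy bound $\partial_t\langle A.\rangle_t\geq\bar\Upsilon_N$, note that as long as $A_t>0$ the discrepancy set $\mathcal{B}_t$ admits at least one maximal connected subinterval $\lint a,b\rint$; by maximality and the definition of $\mathcal{B}_t$ the two profiles agree at $a$ and at $b$. A short count of $\pm$-increments of $h^\xi$ (which take only the two values $1-k/N$ and $-k/N$) shows that if $h_t^\mu$ were monotone on $\lint a,b\rint$, the common endpoint values would force $h_t^\wedge$ to have the same number of $+$- and $-$-increments on the component and hence the same configuration there, contradicting $\lint a,b\rint\subset\mathcal{B}_t$. Therefore $h_t^\mu$ contains at least one local extremum in $\lint a,b\rint$, which proves $\#(\mathcal{D}_t\cap\lint a,b\rint)\geq 1$, hence $\#\mathcal{D}_t\ge 1$.

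For the quantitative bound I would decompose $\mathcal{B}_t=\bigsqcup_j \lint a_j,b_j\rint$ into its maximal subintervals and establish two estimates per component. Firstly, since $0\leq\bar G(x)[h_t^\wedge(x)-h_t^\mu(x)]\leq H(t)$ pointwise and the summand vanishes off $\mathcal{B}_t$, component $j$ contributes at most $(b_j-a_j+1)H(t)$ to $A_t\bar\delta_{\min}=\sum_{x=1}^{N-1}\bar G(x)[h_t^\wedge(x)-h_t^\mu(x)]$. Secondly, since every monotone run of $h_t^\mu$ has length at most $Q(h_t^\mu)$, the interval $\lint a_j,b_j\rint$ must contain at least $\lceil(b_j-a_j)/Q(h_t^\mu)\rceil$ monotone runs and hence at least $\lceil(b_j-a_j)/Q(h_t^\mu)\rceil-1$ interior local extrema of $h_t^\mu$. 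Combining this with the per-component lower bound of $1$ obtained above and splitting cases on whether $b_j-a_j+1$ is larger or smaller than $3Q(h_t^\mu)$ yields the uniform estimate
\[
\#(\mathcal{D}_t\cap\lint a_j,b_j\rint) \;\geq\; \frac{b_j-a_j+1}{3\,Q(h_t^\mu)}.
\]
Summing over $j$ gives
\[
\#\mathcal{D}_t \;\geq\; \frac{\#\mathcal{B}_t}{3Q(h_t^\mu)} \;\geq\; \frac{A_t\bar\delta_{\min}}{3H(t)\,Q(h_t^\mu)},
\]
and plugging into the starting inequality closes the proof.

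The main obstacle is the bookkeeping in the case split of the monotone-run estimate, where the naive bound $\lceil(b_j-a_j)/Q\rceil-1$ degenerates to zero or becomes negative for small components and one must interpolate cleanly with the per-component $\geq 1$ bound to obtain a uniform factor $1/(3Q(h_t^\mu))$ that matches the component-wise upper bound $(b_j-a_j+1)H(t)$. Everything else is a purely geometric comparison between the "area" $A_t\bar\delta_{\min}$ under $h_t^\wedge-h_t^\mu$ and the number of corners $h_t^\mu$ must make to fit under a discrepancy of pointwise height at most $H(t)$ on a horizontal span of total length $\#\mathcal{B}_t$.
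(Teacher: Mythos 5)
Your proposal is correct and follows essentially the same route as the paper: lower-bound $\partial_t\langle A.\rangle_t$ by $\bar\Upsilon_N\,\#\mathcal{D}_t$ via the carr\'e du champ identity \eqref{squarefeild}, then compare $\#\mathcal{D}_t$ per maximal component of $\mathcal{B}_t$ both to $1$ (non-monotonicity) and to $(b-a)/(3Q(h_t^\mu))$ (maximal monotone runs), and match this against the per-component area bound $(b-a)H(t)$. Your explicit case split on whether a component is shorter or longer than $3Q(h_t^\mu)$ is in fact slightly more careful than the paper's chain $\tfrac12\lfloor (b-a)/Q\rfloor \ge \tfrac13 (b-a)/Q$, which only holds after interpolating with the per-component bound of $1$, exactly as you do.
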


\begin{proof}
In $\mathcal{B}_t$, we decompose the path $h^{\mu}_t$ into consecutive maximal  monotone segments. Note that in $\mathcal{B}_t$ every two consecutive components correspond to one admissible jump, which is a point in $\mathcal{D}_t$. As any maximal monotone component is at most of length  $Q(h_t^{\mu})$ defined in \eqref{maxmontonesegm}, 
we obtain
\begin{equation} \label{lwbd:flipcornerNum}
\# (\mathcal{D}_t \cap \lint a, b \rint) \;\ge\; \frac{1}{2} \left\lfloor \frac{b-a}{Q(h_t^{\mu})} \right\rfloor \;\ge\; \frac{1}{3} \frac{b-a}{Q(h_t^{\mu})}\,.
\end{equation}
Moreover,  we observe that 
\begin{equation} \label{upbd:areamaxhight}
\bar\delta_{\min}^{-1}\sum_{x=a}^{b} \left( h_t^\wedge(x)- h_t^{\mu}(x)\right)   \bar G(x)
 \;\le\;\bar \delta_{\min}^{-1} (b-a)    H(t)\,.
\end{equation}

Summing up all such intervals $\lint a, b \rint$ and using  \eqref{lwbd:flipcornerNum} and \eqref{upbd:areamaxhight}, we obtain
\begin{equation*}
A_t\;\le\; 3\bar\delta_{\min}^{-1} H(t) Q(\eta_t^{\mu}) \#\mathcal{D}_t\,.
\end{equation*} 
Therefore, we have 
\begin{equation*}
\partial_t \langle A. \rangle \; \ge\; \bar \Upsilon_N \# \mathcal{D}_t   \; \ge\;  \bar \Upsilon_N \frac{A_t \bar\delta_{\min}}{3H(t)Q\left(h_t^\mu\right)}\,.
\end{equation*}
This yields the desired result, combining with \eqref{upbd:trAbracket}.
\end{proof}

 For the lower bound on  $\partial_t \langle A. \rangle $ stated in Lemma \ref{lema:lwbdderAt},   we provide an upper bound on  $Q(h_t^{\mu})$  in the following lemma. We recall  $$t_{\delta}=(1+\delta)\frac{1}{2 \bar \gl_1}\log k\,.$$
 \begin{lemma}\label{lema:upbdQmu} 
Assuming $k \le N/2$, for every $\gep>0$, for all $N$ sufficiently large  we have for all $t \ge 0$,
\begin{equation}\label{upbd:flipcorners}
 \bP \left[ Q(h_t^{\mu})> N k^{-1} (\log N)^{1+\gep}\right]\;\le\; 2^{-(\log N)^{1+\gep/2}}. 
\end{equation}

\end{lemma}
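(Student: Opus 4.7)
\textbf{Proof plan for Lemma \ref{lema:upbdQmu}.} The plan is to reduce the dynamic statement to a static one via invariance, then control long particle/empty runs under the uniform measure by an elementary first-moment argument.

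First, since $\mu_{N,k}$ is invariant under $\cL_{N,k}$, the distribution of $h_t^{\mu}$ equals $\mu_{N,k}$ for every $t\ge 0$, so the $t$-dependence disappears and it suffices to bound $\mu_{N,k}(Q(h^\xi)>L)$ under a single sample from $\mu_{N,k}$. Next, I would use $h^\xi(x)-h^\xi(x-1)=\xi(x)-k/N$ to reinterpret the two quantities in \eqref{def:Q12}: $Q_1(h^\xi)$ is the length of the longest run of consecutive occupied sites, while $Q_2(h^\xi)$ is the length of the longest run of consecutive empty sites. Thus $\{Q(h^\xi)>L\}$ is contained in the union, over $i\in\lint 0,N-L\rint$, of the two events $\{\xi(i{+}1)=\cdots=\xi(i{+}L)=1\}$ and $\{\xi(i{+}1)=\cdots=\xi(i{+}L)=0\}$.

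Setting $L=L_N\colonequals\lceil Nk^{-1}(\log N)^{1+\gep}\rceil$, if $L_N>N$ then $Q\le N<L_N$ trivially and the bound holds. Otherwise, a direct hypergeometric computation on the uniform measure gives, for any fixed $i$,
\begin{align*}
\mu_{N,k}\bigl(\xi(i{+}1)=\cdots=\xi(i{+}L_N)=1\bigr)&=\prod_{j=0}^{L_N-1}\frac{k-j}{N-j}\;\le\;\bigl(k/N\bigr)^{L_N}\;\le\;2^{-L_N},\\
\mu_{N,k}\bigl(\xi(i{+}1)=\cdots=\xi(i{+}L_N)=0\bigr)&=\prod_{j=0}^{L_N-1}\frac{N-k-j}{N-j}\;\le\;\bigl(1-k/N\bigr)^{L_N}\;\le\;e^{-kL_N/N},
\end{align*}
using $k\le N/2$ in the first line and the standard inequality $1-x\le e^{-x}$ in the second (with the convention that either probability vanishes once $L_N$ exceeds $k$ or $N-k$ respectively, which only makes the stated bounds stronger). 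A union bound over the at most $N$ starting positions $i$ then yields
\begin{equation*}
\mu_{N,k}\bigl(Q(h)>L_N\bigr)\;\le\;N\cdot 2^{-L_N}+N\cdot e^{-kL_N/N}\;\le\;2N\,e^{-(\log N)^{1+\gep}}.
\end{equation*}

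To conclude, I would compare $2N\,e^{-(\log N)^{1+\gep}}$ with the target bound $2^{-(\log N)^{1+\gep/2}}$: taking $\log_2$, the difference of exponents is of order $(\log N)^{1+\gep}-(\log N)^{1+\gep/2}$, which dominates $\log_2(2N)$ once $N$ is large. There is no real obstacle here; the estimate is a standard run-length concentration bound, and the gap between $1+\gep$ and $1+\gep/2$ in the statement is precisely what absorbs the polynomial prefactor $N$ from the union bound and the $\log 2$ coming from converting between bases.
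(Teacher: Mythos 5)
Your proposal is correct and follows essentially the same route as the paper: reduce to a static statement via stationarity of $\mu_{N,k}$, reinterpret $Q_1$ and $Q_2$ as the longest runs of occupied and empty sites, and bound the probability of a long run by a union bound combined with the hypergeometric product estimate $\prod_{j}\frac{k-j}{N-j}\le (k/N)^{L}$ (the paper's inequality \eqref{ineq:fractional}). The only cosmetic difference is that the paper unions over disjoint blocks of half the run length while you union directly over all starting positions of a full-length run; both give super-polynomially small bounds that comfortably absorb the factor $N$, exactly as your final comparison of exponents shows.
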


\begin{proof} 
By the relation between $\xi$ and $h^\xi$ given in \eqref{heighfun},
in \eqref{def:Q12}  we use the notation $Q_1(\xi)$ by replacing  $h^\xi(x)-h^\xi(x-1)=1-k/N$ with $\xi(x)-\xi(x-1)=1$ and similarly for $Q_2(\xi)$.  We have

\begin{equation}\label{upbd:Q12equi}
\begin{aligned}
&\bP \left[ Q(h_t^{\mu})> N k^{-1} (\log N)^{1+\gep}\right]\\
=\; &\mu_{N, k} \left(Q(\xi)> N k^{-1} (\log N)^{1+\gep} \right)\\
\le\;&
\mu_{N, k} \left(Q_1(\xi)> N k^{-1} (\log N)^{1+\gep} \right)
+
\mu_{N, k} \left(Q_2(\xi)> N k^{-1} (\log N)^{1+\gep} \right)\,.
\end{aligned}
\end{equation}
We cut the line segment $\lint 1, N \rint$ into segments of length $\lfloor (\log N)^{1+\gep}/2 \rfloor$  as follows
\begin{equation*}
\left( 
\lint (i-1) \cdot \lfloor (\log N)^{1+\gep}/2 \rfloor+1,\, i \lfloor (\log N)^{1+\gep}/2 \rfloor \rint \right)_{1\le i \le \lfloor N/\lfloor (\log N)^{1+\gep}/2 \rfloor \rfloor }
\end{equation*}
except that the length of the last segment may be less than $\lfloor (\log N)^{1+\gep}/2 \rfloor$. 

Concerning the first term in the r.h.s.\ of \eqref{upbd:Q12equi}, if $Q_1(\xi)> (\log N)^{1+\gep}$, there must exists some $i_0 \in  \lint 1,\, \lfloor N/\lfloor (\log N)^{1+\gep}/2 \rfloor \rfloor  $ such that 
\begin{equation*}
 \xi(x)=1\,, \quad \forall\, x \in \lint (i_0-1)\cdot \lfloor (\log N)^{1+\gep}/2 \rfloor+1,\, i_0 \lfloor (\log N)^{1+\gep}/2 \rfloor \rint\,.
\end{equation*}
Therefore,  we have
\begin{equation}\label{upbd:probQ111}
\begin{aligned}
&\mu_{N, k} \left(\xi: Q_1(\xi) \ge (\log N)^{1+\gep} \right)\\
\; \le\;&
\frac{N}{ \lfloor (\log N)^{1+\gep}/2\rfloor} \mu_{N, k} \left(\xi:\,  \forall x \in \lint 1,\,  \lfloor (\log N)^{1+\gep}/2 \rfloor\rint, \, \xi(x)=1  \right)\\
=\;&
\frac{N}{ \lfloor (\log N)^{1+\gep}/2\rfloor} {N- \lfloor (\log N)^{1+\gep}/2 \rfloor \choose k-  \lfloor (\log N)^{1+\gep}/2 \rfloor } {N \choose k }^{-1}\\
=\;&
\frac{N}{ \lfloor (\log N)^{1+\gep}/2\rfloor} \frac{k \cdot (k-1)\cdots- \left(k- \lfloor (\log N)^{1+\gep}/2 \rfloor+1\right)}{N \cdot \left( N-1\right)  \cdots \left(N- \lfloor (\log N)^{1+\gep}/2 \rfloor+1\right)}\\
\le\;&
\frac{N}{ \lfloor (\log N)^{1+\gep}/2\rfloor} 2^{-\lfloor (\log N)^{1+\gep}/2\rfloor}\,,
\end{aligned}
\end{equation}
where the last inequality uses $k \le N/2$ and the following inequality: for all $0 < c \le b \le a$,
\begin{equation}\label{ineq:fractional}
\frac{b-c}{a-c} \; \le\;
\frac{b}{a}\,.
\end{equation}
Concerning the second term in the r.h.s.\ of \eqref{upbd:Q12equi},  by the same argument as for \eqref{upbd:probQ111}, we have
\begin{equation}\label{upbd:equilQ2}
\begin{aligned}
&\mu_{N,k}\left(\xi:\, Q_2(\xi) \ge  N k^{-1}(\log N)^{1+\gep} \right)\\
\;\le&\;
\frac{N}{ \lfloor N k^{-1} (\log N)^{1+\gep}/2\rfloor} \mu_{N, k} \left(\xi:\, \forall x \in \lint 1, \left\lfloor Nk^{-1} (\log N)^{1+\gep}/2 \right\rfloor\rint,\,  \xi(x)=0 \right)\\
\;=&\;
\frac{N}{ \lfloor Nk^{-1} (\log N)^{1+\gep}/2\rfloor}  {N- \left\lfloor Nk^{-1}(\log N)^{1+\gep}/2 \right\rfloor \choose k} {N  \choose k}^{-1}\\
\le&\;
\frac{4 k}{(\log N)^{1+\gep}} 
\cdot
\frac{(N-k)(N-k-1) \cdots (N-k-\left\lfloor Nk^{-1}(\log N)^{1+\gep}/2 \right\rfloor+1 )}{N(N-1)\cdots \left(N-\left\lfloor Nk^{-1}(\log N)^{1+\gep}/2 \right\rfloor+1 \right)}\\
\le&\;
\frac{4 k}{(\log N)^{1+\gep}}  \left(1- \frac{k}{N} \right)^{\left\lfloor Nk^{-1}(\log N)^{1+\gep}/2\right\rfloor}\\
\le&\;
\frac{4 k}{(\log N)^{1+\gep}} \cdot \exp  \left(-\frac{k}{N}  \left\lfloor Nk^{-1}(\log N)^{1+\gep}/2\right\rfloor \right)\\
\le&\;
\frac{4 k}{(\log N)^{1+\gep}} \exp  \left(-\frac{(\log N)^{1+\gep}}{3}   \right)
\end{aligned}
\end{equation}
where   we have used  \eqref{ineq:fractional} in the third last inequality. 
 Combining \eqref{upbd:probQ111} 
and \eqref{upbd:equilQ2}, we conclude the proof. 
\end{proof}

\begin{proposition} \label{prop:tdmaxconf}
For all $\gep>0$ and $t \ge t_{\delta/2}$, if $N$ is sufficiently large,  we have

\begin{equation}
\Vert P_t^\wedge- \mu_{N,k} \Vert_{\TV} \;\le\; \gep\,.
\end{equation}
\end{proposition}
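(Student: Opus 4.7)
The plan is to combine the grand-coupling inequality with a re-application of Proposition \ref{prop:consecutstoptime} in which the parameter $\delta$ is replaced by $\delta/2$.

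The first step uses Proposition \ref{prop:attravtive}: in the graphical construction $\eta^\vee_t \le \eta^\mu_t \le \eta^\wedge_t$ almost surely for every $t\ge 0$, and $(\eta^\mu_t)_{t\ge 0}$ has the stationary law $\mu_{N,k}$ at every time. The coupling inequality therefore gives
\begin{equation*}
\Vert P_t^\wedge - \mu_{N,k} \Vert_{\TV} \;\le\; \bP\bigl[\eta^\wedge_t \neq \eta^\mu_t \bigr] \;\le\; \bP\bigl[T_1 > t\bigr],
\end{equation*}
with $T_1$ as in \eqref{def:couptimes}, reducing the proposition to the tail bound $\bP[T_1 > t_{\delta/2}] \le \gep$ for all $N$ sufficiently large.

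The second step is to re-run the proof of Proposition \ref{prop:consecutstoptime} after substituting $\delta \leftarrow \delta/2$ throughout; concretely, pick $\psi \in (0, \delta/16)$ and $K \colonequals \lceil 1/(2\psi)\rceil$, so that the analogues of the two times $t_{\delta/2}$ and $t_\delta$ in that proposition become $t_{\delta/4}$ and $t_{\delta/2}$. Lemma \ref{lema:1ststoptime} applied with the halved parameters yields $\cT_2 = t_{\delta/4}$ with probability $1-o(1)$; Proposition \ref{prop:surpmart} together with Lemma \ref{lema:ANfullmass} and Lemma \ref{lema:lwbdderAt} controls the successive increments $\Delta\cT_i$; and Lemma \ref{lema:upbdQmu}, which is insensitive to $\delta$, provides the upper bound on $Q(h^\mu_t)$ feeding into Lemma \ref{lema:lwbdderAt}. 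Telescoping these increments and using Lemma \ref{lema:eigfunincrement} together with Assumptions \ref{assum:2}--\ref{assum:nparticles} to bound $\cT_\infty - \cT_K \le \bar\Upsilon_N^{-1}\bar\delta_{\min}^{-2} = o(t_{\delta/2}-t_{\delta/4})$, one concludes $T_1 \le t_{\delta/2}$ on an event of probability at least $1-\gep$.

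The main obstacle is bookkeeping: one must check that the admissibility conditions $\psi < \delta/8$ of Lemma \ref{lema:1ststoptime} and $K > 1/(2\psi)$ of Lemma \ref{lema:ANfullmass} are preserved under $\delta \mapsto \delta/2$ (both immediate from $\psi < \delta/16$), and that the decaying factors $k^{-(\delta/4-2\psi)}$, $k^{-\psi/2}$ and $k^{2K\psi - 1}$ quantifying the probability estimates in those lemmas remain $o(1)$ in $N$. Both checks pass for the chosen $\psi$, so the machinery of Section \ref{sec:upbdmixtime} runs verbatim at the halved scale, and the proposition follows for $t \ge t_{\delta/2}$ by the trivial monotonicity $\bP[T_1 > t] \le \bP[T_1 > t_{\delta/2}]$.
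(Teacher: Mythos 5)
Your first step (the coupling reduction to $\bP[T_1>t_{\delta/2}]$) is fine, but the second step is circular. The supermartingale machinery of Section \ref{sec:upbdmixtime} that you propose to re-run at the halved scale is not self-contained: the lower bound on $\partial_t\langle A.\rangle$ in Lemma \ref{lema:lwbdderAt} requires an upper bound on $H(t)=\max_x \bar G(x)\left[h_t^\wedge(x)-h_t^\mu(x)\right]$, which is supplied by Lemma \ref{lema:highestH} --- a lemma you omit from your list of inputs --- and the proof of Lemma \ref{lema:highestH} invokes Proposition \ref{prop:tdmaxconf} itself, precisely to transfer the equilibrium tail bound on $\max_x h_t^\mu(x)$ to $\max_x h_t^\wedge(x)$ via $\bP[\max_x h_t^\wedge(x)\ge a]\le \bP[\max_x h_t^\mu(x)\ge a]+\Vert P_t^\wedge-\mu_{N,k}\Vert_{\TV}$. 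Running your scheme at scale $\delta/2$ therefore presupposes the proposition at times $t\ge t_{\delta/4}$; iterating pushes the requirement down to times $t_{\delta/2^n}$ for every $n$, and the regress cannot bottom out because the limiting time $\frac{1}{2\bar\gl_1}\log k$ is the cutoff time, below which the total variation distance is close to $1$ by Proposition \ref{prop:mixlow}. So the argument as written proves nothing.

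The paper proves Proposition \ref{prop:tdmaxconf} by an independent route (Appendix \ref{app:propmaxconf}), following \cite[Section 8.2]{lacoin2016mixing}: a Peres--Winkler censoring scheme freezes a sparse set of skeleton edges on $[t_{\delta/2},t_\delta)$; the first-moment estimate $\bE[h_t^\wedge(x)]\lesssim \bar\Upsilon_N^{-1/2}\,k\,e^{-\gk_1 t}$ from the weighted discrete heat equation (Lemma \ref{lema:upbdheight}, built on Appendix \ref{appsec:heighfun}) shows the skeleton heights are near equilibrium at time $t_{\delta/2}$, and the two-particle coupling with product eigenfunctions $u_{i,j}$ (Proposition \ref{prop:kpartdiseig}) equilibrates the blocks between skeleton points during $[t_{\delta/2},t_\delta)$. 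If you wish to avoid the censoring argument, you would need some other non-circular control of $\max_x h_t^\wedge(x)$ at time $t_{\delta/2}$; the first-moment bound alone, combined with Markov's inequality, is not strong enough under Assumption \ref{assum:nparticles}.
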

Based on the informarion about $h_t^\wedge$ stated in Appendix \ref{appsec:heighfun}, we follows  the  same strategy in \cite[Section 8.2]{lacoin2016mixing} to prove Proposition \ref{prop:tdmaxconf}, which is postponed in Appendix \ref{app:propmaxconf}.

\begin{lemma}  \label{lema:highestH}
Recalling \eqref{def:H},  for any $\gep\in (0, 1/2)$
we have
\begin{equation}
\lim_{N \to \infty}\, \sup_{t \in [t_{\delta/2},\, t_{\delta}]}\,
 \bP \left[ H(t) \ge k^{1/2+\gep}N^{-1} e^{(\log N)^\upsilon } \right]\;=\;0\,,
\end{equation}
where $\upsilon<1$ is the constant stated in Assumption \ref{assum:1}.
\end{lemma}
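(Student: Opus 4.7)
My plan is to decompose the event via the attractive grand coupling and leverage the uniform bound $\bar G(x) = O(1/N)$ from Lemma~\ref{lema:eigfunincrement}. By Lemma~\ref{lema:eigfunincrement}, $\bar G(x) \leq C(\delta)/N$ uniformly in $x$, and by Proposition~\ref{prop:attravtive} the grand coupling yields $h_t^\wedge(x) \geq h_t^\mu(x)$ almost surely. Hence
$$H(t) \;\leq\; \frac{C(\delta)}{N}\max_{1\leq x \leq N}\bigl(h_t^\wedge(x) - h_t^\mu(x)\bigr) \;\leq\; \frac{C(\delta)}{N}\Bigl(\max_{x} h_t^\wedge(x) \,+\, \max_{x}|h_t^\mu(x)|\Bigr),$$
and it suffices to prove that each of $\max_x h_t^\wedge(x)$ and $\max_x|h_t^\mu(x)|$ is at most $k^{1/2+\gep/2}\,e^{(\log N)^\upsilon/2}$ with probability $1-o(1)$, uniformly for $t \in [t_{\delta/2},t_\delta]$.

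The equilibrium piece is straightforward. By stationarity, $h_t^\mu \stackrel{d}{=} h^\xi$ for $\xi\sim\mu_{N,k}$, and $\sum_{y=1}^x \xi(y)$ is a hypergeometric sample; the Hoeffding--Serfling inequality gives $\bP\bigl[|h^\xi(x)|\geq k^{1/2+\gep/4}\bigr]\leq 2\exp(-ck^{\gep/2})$. Assumption~\ref{assum:nparticles} ensures $k \geq c_\varrho N^\varrho$, so the union bound over the $N$ sites yields $\max_x|h_t^\mu(x)| \leq k^{1/2+\gep/4}$ with probability at least $1-2N\exp(-ck^{\gep/2}) = 1-o(1)$, as desired.

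The main work is controlling $\max_x h_t^\wedge(x)$, for which I would invoke the estimates developed in Appendix~\ref{appsec:heighfun} concerning the dynamics started from $\wedge$. The underlying strategy is an eigenfunction expansion: writing $\ind_{\{y\leq x\}}-k/N = \sum_{i=1}^{N-1}a_i(x)g_i(y)$, the identity $\bE[\tf_i(\eta_t^\wedge)] = e^{-\gl_i t}\tf_i(\wedge)$ together with Theorem~\ref{th:gapshapeder} (namely $g_i\approx h_i$ and $\gl_i\sim i^2\pi^2/N^2$) shows that the first mode dominates, and the choice $t \geq t_{\delta/2}$ drives $|\bE[h_t^\wedge(x)]|$ down to an $o(k^{1/2+\gep/4})$ term for $\delta$ fixed. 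For the fluctuation $h_t^\wedge(x)-\bE[h_t^\wedge(x)]$ I would employ a Dynkin-martingale argument: the relevant bracket scales with the local conductance at $\{x,x+1\}$, or equivalently $1/r(x,x+1)$ together with the accumulated $L^2$-mass of the discrete gradient. Plugging in Assumption~\ref{assum:1} in the worst-case estimation of the bracket yields the multiplicative factor $e^{(\log N)^\upsilon}$; Doob's maximal inequality and a union bound over $x \in \lint 1,N\rint$ then produce $\max_x h_t^\wedge(x)\leq k^{1/2+\gep/2}e^{(\log N)^\upsilon/2}$ with probability $1-o(1)$, uniformly in the time window.

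The main obstacle is the concentration step for $h_t^\wedge(x)$. The martingale bracket for the partial sum $\sum_{y\leq x}\eta_t^\wedge(y)$ is controlled by the resistance $r(x,x+1)$ at the single edge of interest, so any uniform-in-$x$ control inevitably surfaces the maximal resistance $\max_y r(y,y+1) \leq C_\bbP e^{(\log N)^\upsilon}$ of Assumption~\ref{assum:1}; this is the structural reason the prefactor $e^{(\log N)^\upsilon}$ appears in the statement. A secondary technicality is uniform-in-$(i,x)$ control of the coefficients $a_i(x)$ in the eigenfunction expansion, which relies on the shape estimate \eqref{eigfun:shape} from Theorem~\ref{th:gapshapeder} together with standard discrete cosine-Fourier bounds; both ingredients are exactly what Appendix~\ref{appsec:heighfun} is designed to supply, so the proof of the lemma should amount to assembling those appendix inputs with the elementary equilibrium concentration described above.
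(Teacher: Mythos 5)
There are two genuine gaps. First, your opening bound $\bar G(x)\le C(\delta)/N$ is not what Lemma~\ref{lema:eigfunincrement} gives: that lemma controls the \emph{weighted} gradient $\bar\delta_{\max}=\max_x|(c\nabla G)(x)|$, whereas the raw increment is $\bar G(x)=G(x)-G(x+1)=r(x,x+1)\cdot c(x,x+1)[G(x)-G(x+1)]$, so it can only be bounded by $\bar\delta_{\max}\max_x r(x,x+1)\le C(\delta)N^{-1}C_\bbP e^{(\log N)^\upsilon}$. This is exactly where the factor $e^{(\log N)^\upsilon}$ in the statement comes from in the paper (via Assumption~\ref{assum:1}); you instead attribute it to the martingale bracket, and the mechanism you describe is backwards — a large resistance means a \emph{small} conductance and hence a \emph{smaller} jump rate in the bracket, so Assumption~\ref{assum:1} cannot produce a multiplicative $e^{(\log N)^\upsilon}$ there. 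Once the exponential is correctly charged to $\bar G$, the two height maxima only need to be controlled at scale $k^{1/2+\gep}$ with no exponential factor.

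Second, and more seriously, the Dynkin-martingale/Doob concentration you propose for $\max_x h_t^\wedge(x)$ cannot work as sketched. The predictable bracket of the Dynkin martingale associated with $h_t(x)$ satisfies $\partial_t\langle M.\rangle_t\le c(x,x+1)\le\bar\Upsilon_N^{-1}$ (each admissible jump changes $h(x)$ by $\pm1$), so over a time window of length $t_{\delta/2}\asymp N^2\log k$ the accumulated bracket is of order $N^2\log k\,\bar\Upsilon_N^{-1}$, which dwarfs the target $k^{1+\gep}$; Doob plus Chebyshev then yields nothing at the $\sqrt{k}$ scale. The paper avoids this entirely: it uses Proposition~\ref{prop:tdmaxconf} (proved via the censoring argument of Appendix~\ref{app:propmaxconf}), namely $\Vert P_t^\wedge-\mu_{N,k}\Vert_{\TV}\le\gep$ for $t\ge t_{\delta/2}$, to transfer the maximum bound for the $\wedge$-started chain to the equilibrium chain, and then bounds the equilibrium maximum by representing $h^\mu$ as a conditioned IID walk (your hypergeometric/Hoeffding--Serfling treatment of this last piece is fine and essentially equivalent). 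Without the TV-transfer step, or some genuinely different control of the non-equilibrium fluctuations, your argument for the $\wedge$-started maximum does not close.
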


\begin{proof}

Recalling \eqref{def:bardelta}, we have 
\begin{equation}
\begin{aligned}
H(t)
\;=\;&
\max_{x \in \lint 1,\, N \rint}   \left[ G(x)-G(x+1)\right] \cdot c(x, x+1) r(x, x+1) \left[h_t^{\wedge}(x)- h_t^{\mu}(x)\right]\\
\;\le\;&
\max_{x \in \lint 1,\, N \rint}   \left[ G(x)-G(x+1)\right]  c(x, x+1) \cdot \max_{x \in \lint 1,\, N \rint}  
 r(x, x+1) \left[h_t^{\wedge}(x)- h_t^{\mu}(x)\right]\\
 \;\le\;&
\frac{C(\delta)}{N} \max_{x \in \lint 1,\, N \rint}  
 r(x, x+1) \left[h_t^{\wedge}(x)- h_t^{\mu}(x)\right]\\
 \;\le\;&
\frac{C(\delta)}{N} \max_{x \in \lint 1,\, N \rint}  
 r(x, x+1) \max_{x \in \lint 1,\, N \rint}  \left[h_t^{\wedge}(x)- h_t^{\mu}(x)\right]\\
  \;\le\;&
  \frac{C(\delta)}{N} C_{\bbP} e^{(\log N)^\upsilon }  \max_{x \in \lint 1,\, N \rint}  \left[h_t^{\wedge}(x)- h_t^{\mu}(x)\right]\,.\\
\le\;&  \frac{C(\delta)}{N} C_{\bbP} e^{(\log N)^\upsilon } \left[
\max_{x \in \lint 1,\, N \rint}  h_t^{\wedge}(x)-\min_{x \in \lint 1,\, N \rint} h_t^{\mu}(x)  \right]\,,
\end{aligned}
\end{equation}
where we have used Lemma \ref{lema:eigfunincrement} in the second inequality and used  Assumption  \ref{assum:1} in the second last inequality.
Therefore, 
 it is sufficient to  prove    
\begin{equation*}\label{HLpath:estimate}
\begin{aligned}
 &\lim_{N \to \infty}\, \sup_{t \in [t_{\delta/2},\, t_{\delta}]}\,
 \bP \left[ \max_{x \in \lint 1,\, N \rint}  h_t^{\wedge}(x) \ge \frac{1}{2C(\delta) C_\bbP} k^{1/2+\gep} \right]\;=\;0\,,
 \\
& \lim_{N \to \infty}\, \sup_{t \in [t_{\delta/2},\, t_{\delta}]}\,
 \bP \left[ \min_{x \in \lint 1,\, N \rint}  h_t^{\mu}(x) \le - \frac{1}{2C(\delta) C_\bbP} k^{1/2+\gep}\right]\;=\;0\,.
 \end{aligned}
\end{equation*}
By Proposition \ref{prop:tdmaxconf}, we just need to show
\begin{equation}\label{HLpath:22estimate}
\begin{aligned}
 &\lim_{N \to \infty}\,
 \bP \left[ \max_{x \in \lint 1,\, N \rint}  h_t^{\mu}(x) \ge \frac{1}{2C(\delta) C_\bbP} k^{1/2+\gep} \right]\;=\;0\,,
 \\
& \lim_{N \to \infty}\, 
 \bP \left[ \min_{x \in \lint 1,\, N \rint}  h_t^{\mu}(x) \le - \frac{1}{2C(\delta) C_\bbP} k^{1/2+\gep}\right]\;=\;0\,,
 \end{aligned}
\end{equation}
since for all $t \ge t_{\delta/2}$
\begin{equation*}
\bP \left[ \max_{x \in \lint 1,\, N \rint}  h_t^{\wedge}(x) \ge \frac{1}{2C(\delta) C_\bbP} k^{1/2+\gep} \right]\;\le\;
\bP \left[ \max_{x \in \lint 1,\, N \rint}  h_t^{\mu}(x) \ge \frac{1}{2C(\delta) C_\bbP} k^{1/2+\gep} \right]+\Vert P_t^\wedge- \mu_{N,k} \Vert_{\TV}\,.
\end{equation*}

By similarity, we only write down the detals for the first event in \eqref{HLpath:22estimate}.
Let $(X_i)_{1 \le i \le N}$ be IID with common distribution as
\begin{equation*}
\bP \left( X_1=1-\frac{k}{N} \right)\;=\;\frac{k}{N}
\;=\;
1-\bP \left( X_1=-\frac{k}{N} \right)\,,
\end{equation*}
and we have
\begin{equation}\label{upbd:hightequi}
\begin{aligned}
&\mu_{N, k} \left(\max_{1 \le x \le N} h^\mu(x) \ge \frac{1}{2C(\delta) C_\bbP} k^{1/2+\gep} \right)\\
\;=\;&
\bP \left[ \max_{1 \le x \le N} \sum_{i=1}^x X_i \ge \frac{1}{2C(\delta)C_\bbP} k^{1/2+\gep} \;\Big|\; \sum_{i=1}^N X_i=0\right]\\
\;\le\;&\frac{1}{ \bP \left[ \sum_{i=1}^N X_i=0\right]}
\bP \left[ \max_{1 \le x \le N} \sum_{i=1}^x X_i \ge \frac{1}{2C(\delta)C_\bbP} k^{1/2+\gep}\right]\,.
\end{aligned}
\end{equation}
Observe that if $\sum_{i=1}^N X_i=0$, there must be $k$ many $X_i$s taking values $1-k/N$ and $N-k$ many $X_i$s taking value $-k/N$. Therefore,  using the  following estimate: for all $n \in \bbN$

\begin{equation*}
 \sqrt{ 2 \pi n} \left( \frac{n}{e} \right)^n e^{\frac{1}{12 n+1}}  \;\le\; n !\;\le\; \sqrt{ 2 \pi n} \left( \frac{n}{e} \right)^n e^{\frac{1}{12 n}},
\end{equation*}
we obtain
\begin{equation}\label{lwbd:endhit0}
\begin{aligned}
 \bP \left[ \sum_{i=1}^N X_i=0\right]
 \;&=\;
 {N \choose k} \left(\frac{k}{N} \right)^k  
 \left(1-\frac{k}{N} \right)^{N-k}\\
 &\ge \;
\sqrt{\frac{N}{2 \pi k (N-k)}} e^{\frac{1}{12N+1}-\frac{1}{12k}-\frac{1}{12(N-k)}}\\
&\ge \;
\frac{1}{2\sqrt{\pi k}}\,,
 \end{aligned}
\end{equation}
where we have used  $k \le N/2$ and the fact that both $k$ and $N$ are sufficiently large.
Moreover, since $(( \sum_{i=1}^x X_i)_x, \bP)$ is a martingale w.r.t.\ its natural filtration, we apply Markov's inequality and $L^p$ maximal inequality (cf. \cite[Theorem 5.4.3]{Durrett})  to obtain 
\begin{equation}\label{upbd:iidhighest}
\begin{aligned}
&\bP \left[ \max_{1 \le x \le N} \sum_{i=1}^x X_i \ge \frac{1}{2C(\delta)C_\bbP} k^{1/2+\gep}\right]\\
\;\le\;&
\left( \frac{1}{2C(\delta)C_\bbP} k^{1/2+\gep}\right)^{-2 \lceil 1/\gep \rceil} \bE \left[ \max_{1 \le x \le N}  \left( \sum_{i=1}^x X_i \right)^{2 \lceil 1/\gep \rceil} \right]\\
\;\le\;&
\left( \frac{1}{2C(\delta)C_\bbP}  k^{1/2+\gep}\right)^{-2 \lceil 1/\gep \rceil}  \left(  \frac{2 \lceil 1/\gep \rceil}{2 \lceil 1/\gep \rceil-1}\right)^{2 \lceil 1/\gep \rceil} \bE \left[  \left( \sum_{i=1}^N X_i \right)^{2 \lceil 1/\gep \rceil} \right]\\
\;\le \;&
\left( \frac{1}{2C(\delta)C_\bbP} k^{1/2+\gep}\right)^{-2 \lceil 1/\gep \rceil}  \left(  \frac{2 \lceil 1/\gep \rceil}{2 \lceil 1/\gep \rceil-1}\right)^{2 \lceil 1/\gep \rceil}  C(\gep) k^{\lceil 1/\gep \rceil}\\
\;\le \;&
C(\gep, \delta, \bbP) k^{-2}
\end{aligned}
\end{equation}
where the second last equality is due to binomial expansion and the IID property of $(X_i)_i$ with $\bE[X_i]=0$ and for all $m \ge 2$,
\begin{equation*}
\begin{aligned}
\vert \bE\left[ X_i^m\right] \vert \;&=\; \left\vert \frac{k}{N} \left(1-\frac{k}{N} \right)^m+ \left(1- \frac{k}{N} \right)\left( -\frac{k}{N}\right)^m \right\vert \\
&\le \;\frac{k}{N} \left( 1-\frac{k}{N}\right) \left[ \left( 1-\frac{k}{N}\right)^{m-1}+ \left( \frac{k}{N}\right) ^{m-1}\right]\\
&\le\; \frac{2k}{N}\,.
\end{aligned}
\end{equation*}
Combining \eqref{lwbd:endhit0} and \eqref{upbd:iidhighest},
we obtain an upper bound on the r.h.s.\ of \eqref{upbd:hightequi} as 
\begin{equation}\label{inprob:highpath}
\mu_{N, k} \left(\max_{1 \le x \le N} h^\mu(x) \ge \frac{1}{2C(\delta) C_\bbP} k^{1/2+\gep} \right)\;\le\; 2\sqrt{\pi} C(\gep, \delta, \bbP) k^{-3/2}\,.
\end{equation}

\end{proof}

 With Lemma \ref{lema:highestH} at hands, we are ready to prove Proposition \ref{prop:consecutstoptime}.

\begin{proof}[Proof of Proposition \ref{prop:consecutstoptime}] Take $\gep' \in (0, \psi/2)$.
 We define the event  $\mathcal{H}_N$ where  there are a lot of admissible jumps in $h_t^{\mu}$ during the time interval $[t_{\delta/2},  t_{\delta}]$ 
\begin{equation*}
\cH_N \colonequals \left\{ 
\int_{t_{\delta/2}}^{t_{\delta}}  
\ind_{ \{H(t) \;\le\; k^{1/2+\gep'}N^{-1} e^{(\log N)^\upsilon } \} \bigcap  \{Q(h_t^{\mu}) \;\le\; N k^{-1}(\log N)^{1+ \gep'}\}}  
 \dd t  \;\ge\;  (t_{\delta}-t_{\delta/2}) \left( 1-2^{-(K+1)}  \right)
   \right \}\,.
\end{equation*}
First, we show that $\cH_N$ holds with high probability. We have

\begin{equation}
\begin{aligned}\label{eventHfullmass}
\bP\left[\cH_N^{\complement}\right] 
\;&=\;
\bP\left[\int_{t_{\delta/2}}^{t_{\delta}} \ind_{  \{H(t) \;>\; k^{1/2+\gep'}N^{-1} e^{(\log N)^\upsilon } 
\} \bigcup
\{Q(h^{\mu}_t)> N k^{-1} (\log N)^{1+ \gep'}\}} \dd t \ge 2^{-(K+1)} (t_{\delta}-t_{\delta/2})\right]   \\
&\le\;
\bP\left[\int_{t_{\delta/2}}^{t_{\delta}} \ind_{
\{Q(h^{\mu}_t)> N k^{-1} (\log N)^{1+ \gep'}\}} \dd t \ge   (t_\delta-t_{\delta/2})2^{-(K+2)} \right]\\
& \quad +
\bP\left[\int_{t_{\delta/2}}^{t_{\delta}} \ind_{  \{H(t) \;>\;k^{1/2+\gep'}N^{-1}e^{(\log N)^\upsilon } \} } \dd t \ge  (t_\delta-t_{\delta/2}) 2^{-(K+2)} \right]\,.
\end{aligned}
\end{equation}
Concerning the first term in the r.h.s.\ of \eqref{eventHfullmass}, 
by Markov's inequality
we have
\begin{equation}
\begin{aligned}
&\bP\left[\int_{t_{\delta/2}}^{t_{\delta}} \ind_{
\{Q(h^{\mu}_t)> N k^{-1} (\log N)^{1+ \gep'}\}} \dd t \ge   (t_\delta-t_{\delta/2}) 2^{-(K+2)} \right]\\
\; 
\le\; &
(t_\delta-t_{\delta/2})^{-1} 2^{K+2}  \int_{t_{\delta/2}}^{t_{\delta}}  \bE\left[ \ind_{
\{Q(h^{\mu}_t)> N k^{-1} (\log N)^{1+ \gep'}\}}\right] \dd t \\
\le\;&
 2^{K+2} \bP \left[Q(h_t^\mu)> Nk^{-1} (\log N)^{1+\gep'} \right]\,,
\end{aligned}
\end{equation}
which tends to zero as $N$ goes to infinity by Lemma  \ref{lema:upbdQmu}. Similarly, about the second term in the r.h.s.\ of \eqref{eventHfullmass}, 
\begin{equation}
\begin{aligned}
&\;\bP\left[\int_{t_{\delta/2}}^{t_{\delta}} \ind_{  \{H(t) \;>\;k^{1/2+\gep'}N^{-1} e^{(\log N)^\upsilon } \} } \dd t \ge  (t_\delta-t_{\delta/2}) 2^{-(K+2)} \right] \\
\le\;&
 (t_\delta-t_{\delta/2})^{-1} 2^{K+2} \int_{t_{\delta/2}}^{t_{\delta}}  \bE \left[ \ind_{  \{H(t) \;>\;k^{1/2+\gep'}N^{-1} e^{(\log N)^\upsilon } \} }  \right] \dd t\\
\le\;&
  2^{K+2}  \sup_{t \in [t_{\delta/2},\, t_{\delta}]} \,
 \bP \left[ \{H(t) \;>\;k^{1/2+\gep'}N^{-1} e^{(\log N)^\upsilon } \} \right]
\end{aligned}
\end{equation}
which also tends to zero as $N$ goes to infinity by Lemma \ref{lema:highestH}.

From now on, we assume the event $\cA_N \cap \cH_N \cap\{\Tau_2=t_{\delta/2}\}$.  Based on \eqref{eventHfullmass}, Lemma \ref{lema:1ststoptime} and Lemma \ref{lema:ANfullmass}, we have
\begin{equation*}
\lim_{N \to \infty} \bP\left[ \cA_N \cap \cH_N \cap\{\Tau_2=t_{\delta/2}\} \right]\;=\;1\,.
\end{equation*}
 By induction, we show that $\Delta \Tau_j=\Tau_j-\Tau_{j-1}\leq 2^{-j} (t_{\delta}-t_{\delta/2})$ for all $j \in \lint 3, K \rint$. We argue by contradiction: let  $i_0$ be the smallest integer satisfying
\begin{equation*}
\Delta \Tau_{i_0} \;>\; 2^{-i_0} (t_{\delta}-t_{\delta/2})\,.
\end{equation*}
We know that 
\begin{equation}\label{combine1}
\Delta_{i_0}\langle  A \rangle \;\ge\; \int_{\Tau_{i_0-1}}^{\Tau_{i_0-1}+2^{-i_0} (t_{\delta}-t_{\delta/2})} \partial_t \langle A. \rangle \ind_{ \{H(t) \;\le\; k^{1/2+\gep'}N^{-1} e^{(\log N)^\upsilon } \} \bigcap \{ Q(h^{\mu}_t)\le N k^{-1} (\log N)^{1+ \gep'} \}} \dd t\,. 
\end{equation}
According to Lemma \ref{lema:lwbdderAt}, Lemma \ref{lema:upbdQmu} and Lemma \ref{lema:highestH}, we have a lower bound for $\partial_t\langle A.\rangle$ when the indicator function equals to $1$. 
That bound is
\begin{equation}\label{combine2}
\begin{aligned}
\partial_t\langle A.\rangle
\;&\ge\; 
  \bar \Upsilon_N \frac{A_t \bar\delta_{\min}}{3H(t)Q\left(h_t^\mu\right)}\\
  \; &\ge\; 
  \bar \Upsilon_N \frac{A_t \bar\delta_{\min}}{3 k^{1/2+\gep'}N^{-1} e^{(\log N)^\upsilon }  N k^{-1} (\log N)^{1+ \gep'}}\\
&=\;
 \bar \Upsilon_N \frac{A_t \bar\delta_{\min} k^{1/2-\gep'}}{3 e^{(\log N)^\upsilon }   (\log N)^{(1+\gep')}}\,.
 \end{aligned}
\end{equation}
Since $\Tau_2=t_{\delta/2}$ and $\Delta \Tau_j=\Tau_j-\Tau_{j-1}\leq 2^{-j} (t_{\delta}-t_{\delta/2})$ for $j<i_0$,   we know that
\begin{equation*}
\Tau_{i_0-1}\;\le\; t_{\delta/2}+  \left( t_{\delta}-t_{\delta/2}\right) \sum_{j=3}^{i_0-1}2^{-j}
\end{equation*}
and then 

\begin{equation*}
\Tau_{i_0-1}+2^{-i_0} \left(t_{\delta}-t_{\delta/2} \right)
\;\le\; t_{\delta/2}+ \left( t_{\delta}-t_{\delta/2}\right) \sum_{j=3}^{i_0}2^{-j}\;<\;t_\delta\,.
\end{equation*}
Moreover, when  the assumption $\cH_N$ holds, the indicator function 
\begin{equation*}
\ind_{ \{H(t) \;\le\; k^{1/2+\gep'}N^{-1} e^{(\log N)^\upsilon } \} \bigcap  \{Q(h_t^{\mu})\le N k^{-1} (\log N)^{1+\gep'} \} }
\end{equation*}
is equal to $1$ on a set, which is of Lebesgue measure at least
\begin{equation}\label{combine3}
 (2^{-i_0}-2^{-(K+1)}) \left(t_{\delta}-t_{\delta/2}  \right) \;\ge\; 2^{-(K+1)}\left(t_{\delta}-t_{\delta/2}  \right)\,. 
\end{equation}
 
Combining \eqref{combine1}, \eqref{combine2} and \eqref{combine3}, we obtain
\begin{equation}\label{contradiction1}
\begin{aligned}
\Delta_{i_0} \langle  A \rangle  \;&\ge\; 2^{-(K+1)} (t_\delta-t_{\delta/2})  \bar \Upsilon_N \frac{A_t \bar\delta_{\min} k^{1/2-\gep'}}{3 e^{(\log N)^\upsilon }  (\log N)^{1+\gep'}}\\
  \;&\geq\; 
  2^{-(K+1)}(t_\delta-t_{\delta/2})  \bar \Upsilon_N \frac{k^{1-i_0\psi-\gep'}}{3 e^{(\log N)^\upsilon }  (\log N)^{1+\gep'}} \,,  
  \end{aligned}
\end{equation}
where the last inequality uses the fact that $A_t > \bar \delta_{\min}^{-1}  k^{\frac{1}{2}-i_0\psi} $, 
for $t<\Tau_{i_0}$.
In addition, since we are in $\cA_N$, we know that 
\begin{equation}\label{contradiction2}
\Delta_{i_0}\langle A\rangle \;\le\;  \bar \delta_{\min}^{-2} k^{1-2(i_0-1)\psi+\frac{1}{2}\psi}\,. 
\end{equation}
However, as $i_0\geq 3$, we have
$$1-i_0 \psi - \gep'\;>\; 1-2(i_0-1)\psi +\frac{1}{2}\psi.$$ Therefore, under  Assumption \ref{assum:2} and Assumption \ref{assum:nparticles},  there is a contradiction between \eqref{contradiction1} and \eqref{contradiction2}, as long as $N$ is large enough.
\end{proof}

\appendix
\section{Upper bound on the covariance of the  probability measure $\nu$}\label{secapped:cov}

In this section, our goal is to provide an upper bound on the covariance  of the probability measure $\nu$ defined above \eqref{upbd:martquad}, which is the following. 

\begin{lemma}\label{lema:2ndmomLLN}
 For any $\gga>0$,  if $ (\log N)^{1+\gga} \le k \le N/64 $,  for all $\delta \in (0, \frac{\gga}{2(1+\gga)})$  and  for all $N$ sufficiently large  we have
\begin{equation}
\sum_{1 \le x, \, y \le N} \, \left\vert  \nu \left( \xi(x) \xi(y) \right) \right)- \nu\left( \xi(x) \right) \nu \left(\xi(y) \right \vert\;\le\; 2^{12} k^{2-\delta}\,.
\end{equation}
\end{lemma}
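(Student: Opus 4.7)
The plan is to apply the law of total covariance conditionally on $M \colonequals \bar\xi(k)$, the position of the $k$-th leftmost particle in the auxiliary configuration $\xi \sim \mu_{N, 2k}$ used in the definition of $\nu$. Let me write $\eta(x) \colonequals \xi(x) \ind\{x \le M\}$ for the $\nu$-distributed configuration, so the lemma asks for a bound on $\sum_{x, y} |\Cov(\eta(x), \eta(y))|$ (covariances being under $\mu_{N, 2k}$). The key structural observation is that, conditionally on $M = m$, one has $\eta(m) = 1$, $\eta(x) = 0$ for $x > m$, and $\eta|_{\lint 1, m-1\rint} \sim \mu_{m-1, k-1}$ (uniform on $(k-1)$-subsets of $\lint 1, m-1\rint$); consequently
\begin{equation*}
p(x, m) \colonequals \mathbb{E}[\eta(x) \mid M = m] = \ind\{x = m\} + \ind\{x < m\}\,\tfrac{k-1}{m-1}.
\end{equation*}

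The law of total covariance decomposes $\Cov(\eta(x), \eta(y))$ into an \emph{intrinsic} term $\mathbb{E}[\Cov(\eta(x), \eta(y) \mid M)]$ and an \emph{extrinsic} term $\Cov(p(x, M), p(y, M))$, which I handle separately. For the intrinsic term, the standard hypergeometric formula yields $\Cov(\eta(x), \eta(y) \mid M = m) = (k-1)(k-m)/[(m-1)^2(m-2)]\,\ind\{x \neq y,\ x, y < m\}$, of absolute value $O(k/m^2)$ when $k \le m/2$. Summing $|\Cov|$ over all pairs contributes $O(k)$ for each fixed $m$ (diagonal: $\sum_x p(x, m) \le k$; off-diagonal: $(m-1)^2 \cdot O(k/m^2) = O(k)$), and this estimate is preserved when averaging over $M$.

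For the extrinsic term I invoke the concentration of $M$: as the $k$-th order statistic of a uniform random $2k$-subset of $\lint 1, N \rint$, $M$ has mean $\sim N/2$, variance $\Theta(N^2/k)$, and Gaussian-type tails $\mathbb{P}[|M - N/2| \ge s N/\sqrt{k}] \lesssim e^{-c s^2}$ via hypergeometric Chernoff bounds. I partition $\lint 1, N \rint$ into a left bulk $\{x \le N/2 - CN\sqrt{\log k}/\sqrt{k}\}$, a transition layer of width $O(N\sqrt{\log k}/\sqrt{k})$ around $N/2$, and a right tail, and estimate $\sum_{x, y} |\Cov(p(x, M), p(y, M))|$ region-by-region. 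In the bulk the delta method gives $\Var(p(x, M)) \approx (k-1)^2\,\Var(1/M) = O(k/N^2)$, which after Cauchy--Schwarz contributes $O(k)$; the right-tail contribution is exponentially small. The main obstacle is the transition layer, where the jump of $p(\cdot, m)$ at $m = x$ combined with $\mathbb{P}[M = x] = O(\sqrt{k}/N)$ makes a naive Cauchy--Schwarz bound fall short. Here one must exploit the cancellation built into $\sum_x p(x, M) \equiv k$ (which forces $\sum_{x,y} \Cov(p(x, M), p(y, M)) = 0$) via an Abel-type summation, or alternatively perform an explicit term-by-term computation using $p(x, M) p(y, M) = \ind\{M = y\}(k-1)/(y-1) + \ind\{M > y\}((k-1)/(M-1))^2$ for $x < y$, followed by direct estimates on the resulting partial sums.

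Combining both contributions should yield $\sum_{x, y}|\Cov(\eta(x), \eta(y))| = O(k\,\mathrm{polylog}(k))$, which is comfortably below $2^{12}\,k^{2-\delta}$ once $k \ge (\log N)^{1+\gamma}$ and $N$ is large (indeed one expects the bound to hold for any $\delta < 1$, far beyond the stated constraint $\delta < \gamma/[2(1+\gamma)]$). The main technical obstacle is the careful treatment of the transition layer, where naive moment bounds on $p(x, M)$ are insufficient and the cancellation $\sum_x p(x, M) = k$ must be invoked explicitly; once this is handled, the remaining pieces reduce to standard hypergeometric concentration estimates.
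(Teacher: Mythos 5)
Your decomposition into an intrinsic term $\bE\left[\Cov(\eta(x),\eta(y)\mid M)\right]$ and an extrinsic term $\Cov(p(x,M),p(y,M))$ is sound, and the intrinsic part, the left bulk, and the right tail are treated essentially correctly. But there is a genuine gap exactly where you yourself place ``the main obstacle'': the extrinsic contribution of the transition layer around $N/2$ is never actually estimated. You assert that one ``must exploit the cancellation $\sum_x p(x,M)\equiv k$ via an Abel-type summation, or alternatively perform an explicit term-by-term computation \ldots followed by direct estimates on the resulting partial sums,'' but neither route is carried out, and the cancellation identity only controls the \emph{signed} sum $\sum_{x,y}\Cov(p(x,M),p(y,M))=0$, not the sum of absolute values the lemma requires. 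The omission is not cosmetic: for $k$ as small as $(\log N)^{1+\gga}$ the indicator piece $\ind\{y=M\}$ of $p(y,M)$ has standard deviation of order $k^{1/4}N^{-1/2}$, so Cauchy--Schwarz over the roughly $N^2\log N/k$ transition-layer pairs yields a bound of order $N\log N/\sqrt{k}$, which dwarfs $k^{2-\delta}$; one really must split $p(\cdot,M)$ into its $\ind\{\cdot=M\}$ and $\ind\{\cdot<M\}(k-1)/(M-1)$ pieces and compute each covariance exactly, which (after the dust settles) gives $O(k\log N)$. Two further quantitative slips: the layer width must be of order $N\sqrt{\log N}/\sqrt{k}$, not $N\sqrt{\log k}/\sqrt{k}$, because the tail of $M$ beyond the layer must beat the $N^2$ pair count and $k$ may be only polylogarithmic in $N$; and correspondingly the achievable bound is $O(k\log N)$ rather than $O(k\,\mathrm{polylog}(k))$ --- still far below $2^{12}k^{2-\delta}$ under the hypothesis $\delta<\gga/(2(1+\gga))$, but weaker than you claim.

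For comparison, the paper also conditions on $\bar\xi(k)=j$ and uses the same conditional formulas $(k-1)/(j-1)$ and $(k-1)(k-2)/((j-1)(j-2))$, but it works at the much coarser concentration scale $|\bar\xi(k)-N/2|\le N/k^{\delta}$, proved via entropy estimates on binomial coefficients with failure probability $\exp(-ck^{1-2\delta})$ --- this is precisely where the constraint $\delta<\gga/(2(1+\gga))$ enters, so that the failure probability beats $N^2$. Inside that wide window it bounds each covariance trivially by $2k^2/N^2$ and multiplies by the $4N^2/k^{2\delta}$ pairs, giving $8k^{2-2\delta}$; outside it, replacing $j$ by $N/2$ costs only $O(k^{2-\delta}/N^2)$ per pair. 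This sidesteps the delicate layer analysis entirely at the price of the weaker (but sufficient) exponent $2-\delta$. If you complete the term-by-term computation you sketch, your argument gives a strictly stronger conclusion; as written, its central step is missing.
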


\begin{proof}

Concerning the diagonal terms, 
note that 
\begin{equation*}
 \sum_{x=1}^N \, \left[\nu \left( \xi(x) \right) - \left(\nu\left( \xi(x) \right) \right)^2 \right]   
 \; \le \; \sum_{x=1}^N \, \nu \left( \xi(x) \right)
 \;\le\; k\,.
\end{equation*}
Turning to off-diagonal terms, we have 
\begin{equation*}
\begin{aligned}
&\sum_{1 \le x< y \le N} \, \vert  \left[\nu \left( \xi(x) \xi(y)  \right)- \nu\left( \xi(x) \right) \nu (\xi(y) 
\right] \vert  \\
= \; & \sum_{1 \le x< y \le N} \,  \left \vert \mu_{N, 2k} \left( \xi(x) \xi(y) \ind_{\{ \bar \xi(k) \ge y\}} \right) - \mu_{N,2k}\left( \xi(x) \ind_{\{ \bar \xi(k) \ge x\}}\right) \mu_{N,2k} \left(\xi(y)\ind_{\{ \bar \xi(k) \ge y\}}\right) \right \vert\,.
\end{aligned}
\end{equation*}

To estimate the correlation, we  first prove that the position of $k^{th}$ particle is well concentrated around the midpoint:
 (recall $\bar \xi (\cdot)$ defined in \eqref{partiposition})
\begin{equation}\label{deviate:kthpart}
\mu_{N, 2k} \left( \left\vert \bar \xi(k)-\frac{N}{2} \right \vert \ge  \frac{N}{k^\delta}\right) \;=\; 
\mu_{N, 2k} \left(  \bar \xi(k)  \ge  \frac{N}{2}+ \frac{N}{k^\delta}\right)
+
\mu_{N, 2k} \left(  \bar \xi(k)  \le  \frac{N}{2}- \frac{N}{k^\delta}\right)\,.
\end{equation}
Concerning the first term in the r.h.s.\ of \eqref{deviate:kthpart} and writing  $j' \colonequals \left\lceil  \frac{N}{2}+ \frac{N}{k^\delta} \right\rceil$,  we  have 
\begin{equation}\label{upbd:kthpart222right}
\begin{aligned}
\mu_{N, 2k} \left(  \bar \xi(k)  \ge  \frac{N}{2}+ \frac{N}{k^\delta}\right)
\;&=\; \sum_{j= j'}^{N-k} 
\mu_{N, 2k} \left(  \bar \xi(k)  =j\right)\\
&=\;
\sum_{j= j'}^{N-k} {j-1 \choose k-1}{N-j \choose k} {N \choose 2k}^{-1} \\
& \le \;
\frac{N}{2}  {j'-1 \choose k-1}{N-j' \choose k} {N \choose 2k}^{-1}\,,
\end{aligned}
\end{equation}
where we have  used that the map 
\begin{equation*}
j \mapsto {j-1 \choose k-1}{N-j \choose k}
\end{equation*}
is decreasing for $j \in \lint \left\lceil  \frac{N}{2}+ \frac{N}{k^\delta} \right\rceil,\, N-k\rint $ due to that 
\begin{equation*}
{j \choose k-1}{N-j-1 \choose k} {j-1 \choose k-1}^{-1}{N-j\choose k}^{-1} \le 1
\end{equation*}
if and only if
\begin{equation*}
2kj-kN+N-j \;\ge\; 0\,.
\end{equation*}
 Using  \eqref{convex:diff} and \eqref{bd:binomial}, 
we now provide an upper bound on the r.h.s.\ of \eqref{upbd:kthpart222right} by:
 \begin{equation*}
 \begin{aligned}
 \frac{N}{2}  {j'-1 \choose k-1} {N-j' \choose k} {N \choose 2k}^{-1}
\;&=\;  
 \frac{N}{2} \cdot \frac{k}{j'} {j' \choose k} {N-j' \choose k} {N \choose 2k}^{-1}\\
 \;&=\;  
 \frac{Nk}{2j'}
 {2k \choose k} {N-2k \choose j'-k}{N \choose j'}^{-1} 
 \\
&\le\; \frac{Nk}{2j'} \frac{2^{2k \cdot H_2\left(\frac{k}{2k}\right)}}{\sqrt{2 \pi k \left(1-\frac{k}{2k} \right)}}
 \frac{2^{(N-2k)H_2\left( \frac{j'-k}{N-2k}\right)}}{\sqrt{2 \pi (j'-k) \left(1-\frac{j'-k}{N-2k} \right)}}
 \frac{\sqrt{8 \pi j' \left(1-\frac{j'}{N} \right)}}{2^{N\cdot H_2\left(\frac{j'}{N}\right)}}\\
 &\le \; 
 4 \sqrt{k} 2^{N \left( \frac{2k}{N} H_2\left( \frac{k}{2k}\right)+\frac{N-2k}{N} H_2 \left(\frac{j'-k}{N-2k} \right)-H_2\left(\frac{j'}{N} \right) \right)}\\
 &\le\;
 4 \sqrt{k}  2^{- \frac{2}{\log 2} N \cdot \frac{2k}{N} \cdot \frac{N-2k}{N} \left( \frac{k}{2k}-\frac{j'-k}{N-2k} \right)^2}\\
 & \le\;
 4 \sqrt{k} 2^{-\frac{2}{\log 2}k \left( \frac{1}{2}-\frac{j'-k}{N-2k}\right)^2}\\
& \le\;4 \sqrt{k} 2^{-\frac{2}{\log 2}k^{1-2\delta}}\,,
 \end{aligned}
 \end{equation*}
which tends to zero as $k$ tends to infinity.
Similarly, for the other term in \eqref{deviate:kthpart}, we have 
\begin{equation*}
\begin{aligned}
\mu_{N, 2k} \left(  \bar \xi(k)  \le  \frac{N}{2}- \frac{N}{k^\delta}\right)
\;&\le\; 4 \sqrt{k} 2^{-\frac{2}{\log 2}k^{1-2\delta}}\,,
\end{aligned}
\end{equation*}
and then
\begin{equation}\label{upbd:kthpartotal}
\mu_{N, 2k} \left( \left\vert \bar \xi(k)-\frac{N}{2} \right \vert \ge  \frac{N}{k^\delta}\right)
\; \le\; 
8 \sqrt{k} 2^{-\frac{2}{\log 2}k^{1-2\delta}}\,.
\end{equation}

 For $y \ge \frac{N}{2}+ \frac{N}{k^\delta}$, by \eqref{upbd:kthpartotal} we have

\begin{equation*}
\begin{aligned}
N^2\mu_{N, 2k} \left( \xi(x) \xi(y) \ind_{\{ y \le \bar\xi(k)\}}\right)
\;\le \;
N^2\mu_{N, 2k} \left(  \bar\xi(k) \ge \frac{N}{2}+ \frac{N}{k^\delta} \right)\;\le\;
4N^2 \sqrt{k} 2^{-\frac{2}{\log 2}k^{1-2\delta}}\,,\\
N^2\mu_{N, 2k} \left( \xi(x)  \ind_{\{ x \le \bar\xi(k)\}}\right) \mu_{N, 2k} \left( \xi(y)  \ind_{\{ y \le \bar\xi(k)\}}\right)  \;\le\; 4N^2 \sqrt{k} 2^{-\frac{2}{\log 2}k^{1-2\delta}}\,, 
\end{aligned}
\end{equation*}
which tend to zero as $N \to \infty$. 
Therefore, in the remaining we only need to consider $y <  \frac{N}{2}+ \frac{N}{k^\delta}$.
Furthermore,  note that
 for $j \in \lint k, N-k \rint$, under the conditional probability measure $\mu_{N, 2k} \left( \cdot \,\vert \,\bar \xi(k)=j \right)$, the first $k-1$ particles are uniformly distributed on the segment $\lint 1, j-1 \rint$. 
 Therefore, for $1\le x<y$ we have
 \begin{equation}\label{conditonal:2points}
\begin{aligned}
&\mu_{N, 2k} \left( \xi(x) \xi(y) \ind_{\{ x < y \le \bar \xi(k)\}} \,\vert\, \bar \xi(k)=j  \right)\\
=\;&
\mu_{N, 2k} \left( \xi(x) \xi(y) \ind_{\{ x < y < j\}} \,\vert\, \bar \xi(k)=j  \right)
+
\mu_{N, 2k} \left( \xi(x) \xi(y) \ind_{\{ x < y= j\}} \,\vert\, \bar \xi(k)=j  \right)\\
=\;&  \frac{(k-1)(k-2)}{(j-1)(j-2)} \ind_{\{ x < y < j\}} 
+ 
\frac{k-1}{j-1}
\ind_{\{ x < y= j\}}
\end{aligned}
 \end{equation}
and 
\begin{equation}\label{condtional:1point}
\begin{aligned}
&\mu_{N, 2k} \left( \xi(x)  \ind_{\{ x  \le \bar \xi(k)\}} \,\vert\, \bar \xi(k)=j  \right)\\
=\;&
\mu_{N, 2k} \left( \xi(x)  \ind_{\{ x < j\}} \,\vert\, \bar \xi(k)=j  \right)
+
\mu_{N, 2k} \left( \xi(x)  \ind_{\{ x = j\}} \,\vert\, \bar \xi(k)=j  \right)\\
=\;&  \frac{k-1}{j-1} \ind_{\{ x < j\}} 
+ 
\ind_{\{ x = j\}}\,.
\end{aligned}
\end{equation}
Moreover, for $i, j, \ell \in \lint \left\lfloor \frac{N}{2}- \frac{N}{k^\delta} \right\rfloor,\, \left\lceil  \frac{N}{2}+ \frac{N}{k^\delta} \right\rceil \rint$, for all $N$ sufficiently large we have
\begin{equation}\label{estimate:covcal}
\begin{aligned}
\left \vert \frac{k-1}{j} -\frac{k-1}{N/2} \right \vert\; &\le\;8\frac{k^{1-\delta}}{N}\,,\quad \quad
\left \vert i \cdot j -\frac{N^2}{4} \right \vert\; \le\;2\frac{N^2}{k^\delta}\,,\\
\left \vert \frac{(k-1)(k-2)}{(j-1)(j-2)} -\frac{(k-1)^2}{N^2/4} \right \vert\; &\le\;2^7\frac{k^{2-\delta}}{N^2}\,, \quad \quad
\left \vert \frac{(k-1)^2}{i \cdot j } -\frac{(k-1)^2}{N^2/4} \right \vert\; &\le\;2^7\frac{k^{2-\delta}}{N^2}\,.
\end{aligned}
\end{equation}

The analysis is classified in three cases, listed below by (i) for $1\le i \le 3$.
(1) For $1 \le x<y\le \frac{N}{2}- \frac{N}{k^\delta}$, by \eqref{upbd:kthpartotal}, \eqref{conditonal:2points} and \eqref{estimate:covcal}
 we have 
 \begin{equation}\label{smally:cov}
 \left \vert  \mu_{N, 2k} \left( \xi(x) \xi(y) \ind_{\{ \bar \xi(k) \ge y\}} \right)- \frac{(k-1)^2}{N^2/4}
  \right\vert
  \;\le\; 2^4 \sqrt{k} 2^{-\frac{2}{\log 2} k^{1-2\delta}}+2^7\frac{k^{2-\delta}}{N^2}\,.
 \end{equation}
By \eqref{upbd:kthpartotal},  \eqref{condtional:1point} and \eqref{estimate:covcal}, we have

\begin{align}
\left\vert \mu_{N, 2k} \left( \xi(x)  \ind_{\{  \bar\xi(k) \ge x\}}\right) -\frac{k-1}{N/2} \right\vert\;\le\;
48 \sqrt{k} 2^{-\frac{2}{\log 2} k^{1-2\delta}}+8\frac{k^{1-\delta}}{N}\,,\label{esticov:smallx}\\
\left\vert \mu_{N, 2k} \left( \xi(x)  \ind_{\{ \bar\xi(k) \ge x\}}\right)\mu_{N, 2k} \left( \xi(y)  \ind_{\{ \bar\xi(k) \ge y\}}\right)  -\frac{(k-1)^2}{N^2/4} \right\vert\
\;\le\;
2^9\frac{k^{2-\delta}}{N^2}\,,
\end{align}
which together with \eqref{smally:cov} imply for all $N$ sufficiently large, 
\begin{equation}
\left \vert \mu_{N, 2k} \left( \xi(x) \xi(y) \ind_{\{ \bar \xi(k) \ge y\}} \right)-\mu_{N, 2k} \left( \xi(x)  \ind_{\{ \bar\xi(k) \ge x\}}\right)\mu_{N, 2k} \left( \xi(y)  \ind_{\{ \bar\xi(k) \ge y\}}\right) \right\vert 
\;\le\;
2^{10}\frac{k^{2-\delta}}{N^2}\,.
\end{equation}
This is sufficient for those coordinates $1 \le x< y \le \frac{N}{2}- \frac{N}{k^\delta}$.

(2) Now we head to deal with the case $ y \in \lint \frac{N}{2}- \frac{N}{k^\delta},\, \frac{N}{2}+\ \frac{N}{k^\delta}\rint$ and $x < \frac{N}{2}- \frac{N}{k^\delta}$. 
 By \eqref{upbd:kthpartotal}, \eqref{conditonal:2points}  and \eqref{estimate:covcal} we have 
 \begin{equation}\label{estimate:smallxniceyy}
 \begin{aligned}
&\left\vert  \mu_{N, 2k} \left( \xi(x) \xi(y) \ind_{\{  \bar \xi(k)\ge y>x\}} \right)- \frac{(k-1)^2}{N^2/4}\mu_{N, 2k} \left(y<\bar \xi(k)<\frac{N}{2}+ \frac{N}{k^\delta} \right)-\frac{k-1}{N/2} \mu_{N, 2k} \left(\bar \xi(k)=y \right) \right\vert\\
&\le\;
2^8\frac{k^{2-\delta}}{N^2}+8 \sqrt{k} 2^{-\frac{2}{\log 2}k^{1-2\delta}}\,,
 \end{aligned}
 \end{equation}
 where we have used $\mu_{N, 2k}(\bar \xi(k)=y) \le  \mu_{N, 2k}( \xi(y)=1)=\frac{2k}{N}$. 
Moreover, 
by \eqref{upbd:kthpartotal},  \eqref{condtional:1point}  and \eqref{estimate:covcal}, we have

\begin{equation*}
\begin{aligned}
&\;\left\vert \mu_{N, 2k} \left( \xi(y)  \ind_{\{ y  \le \bar \xi(k)\}}   \right)-  \frac{k-1}{N/2} \mu_{N, 2k}\left(y< \bar \xi(k)<\frac{N}{2}+\frac{N}{k^\delta} \right) -\mu_{N, 2k}\left( \bar \xi(k)=y \right)  \right\vert\\
\;\le&\;8
\frac{k^{1-\delta}}{N}+8 \sqrt{k} 2^{-\frac{2}{\log 2}k^{1-2\delta}}
\end{aligned}
\end{equation*}
which together with \eqref{esticov:smallx} implies
\begin{equation}\label{est:smallxbigy}
\begin{aligned}
\Bigg\vert I_{N, 2k}(x, y) -\frac{(k-1)^2}{N^2/4}\mu_{N, 2k} \left(y<\bar \xi(k)<\frac{N}{2}+\frac{N}{k^\delta}\right) -\frac{k-1}{N/2}\mu_{N, 2k}\left(\bar \xi(k)=y \right)\Bigg\vert\;\le\; 2^9 \frac{k^{2-\delta}}{N^2}
\end{aligned}
\end{equation}
with  $I_{N, 2k}(x, y) \colonequals \mu_{N, 2k} \left( \xi(x)  \ind_{\{  \bar \xi(k)\ge x\}}   \right) \mu_{N, 2k} \left( \xi(y)  \ind_{\{  \bar \xi(k)\ge y\}}   \right)$. Then by 
\eqref{est:smallxbigy} and \eqref{estimate:smallxniceyy}, we obtain for all $N$ sufficiently large

\begin{equation*}
\left\vert \mu_{N, 2k} \left( \xi(x)  \ind_{\{  \bar \xi(k)\ge x\}}   \right) \mu_{N, 2k} \left( \xi(y)  \ind_{\{  \bar \xi(k)\ge y\}}   \right)-\mu_{N, 2k} \left( \xi(x) \xi(y) \ind_{\{  \bar \xi(k)\ge y>x\}} \right) \right\vert\;\le\; 2^{10} \frac{k^{2-\delta}}{N^2}\,,
\end{equation*}
which is sufficient for 
 $ y \in \lint \frac{N}{2}- \frac{N}{k^\delta},\, \frac{N}{2}+\ \frac{N}{k^\delta}\rint$ and  $x < \frac{N}{2}- \frac{N}{k^\delta}$.

(3) Concerning $  \frac{N}{2}-\ \frac{N}{k^\delta} \le x < y \le   \frac{N}{2}+\ \frac{N}{k^\delta}$, we have
\begin{equation}\label{estim:midsec}
\begin{aligned}
&\mu_{N, 2k} \left( \xi(x) \xi(y) \ind_{\{ y \le \bar\xi(k)\}}\right)+ \mu_{N, 2k} \left( \xi(x) \ind_{\{ x \le \bar\xi(k)\}} \right)  \mu_{N, 2k} \left( \xi(y) \ind_{\{ y \le \bar\xi(k)\}}\right)\\
\le\;&
\mu_{N, 2k} \left( \xi(x) \xi(y) \right)+ \mu_{N, 2k} \left( \xi(x)  \right)  \mu_{N, 2k} \left( \xi(y)\right)\\
\le \; &
\frac{k(k-1)}{N(N-1)}+ \frac{k^2}{N^2} \;\le\; \frac{2k^2}{N^2}\,,
\end{aligned}
\end{equation}
and furthermore
\begin{equation}
\# \left\{(x, y):\;  \frac{N}{2}-\ \frac{N}{k^\delta} \le x < y \le   \frac{N}{2}+\ \frac{N}{k^\delta} \right\} \;\le\; \frac{4N^2}{k^{2\delta}}\,,
\end{equation}
which allows to conclude the proof together with  the analysis in (1) and (2).
\end{proof}

\section{Height function}\label{appsec:heighfun}
Recalling \eqref{heighfun} and \eqref{gen:height} and setting
\begin{equation*}
f(t,x) \;\colonequals\; \bE \left[h_t^\wedge(x) \right]\,, \quad  \forall\, t \ge 0\,,\;\; \forall\,  x \in \lint 0, N \rint\,,
\end{equation*}
 for $1 \le x<N $  we have
\begin{equation*}
\begin{aligned}
\frac{\partial}{\partial t} f(t, x) 
\;&= \; 
\bE \left[(\cL h_t^\wedge)(x) \right]\\
\;&=\;
c(x,x+1) \bE\left[\left(h_t^\wedge(x+1)-h_t^\wedge(x)\right)-\left(h_t^\wedge(x)-h_t^\wedge(x-1)\right) \right]\\
\;&=\; c(x, x+1) \Delta_x f
\end{aligned}
\end{equation*}
where 
\begin{equation*}
\Delta_x f \;\colonequals\; f(t, x+1)+f(t, x-1)-2f(t,x)\,.
\end{equation*}
In this section, our goal is to  solve the following equation:

\begin{equation}\label{eq:randheat}
\begin{cases}
\frac{\partial}{\partial t} f(t, x) \;=\; c(x,x+1)  \Delta_x f, &\forall\, x \in \{1, \cdots, N-1 \}\,,\\
f(t, 0)\;=\;f(t, N)\;=\;0\,,  &\forall\, t \ge 0\,,\\
f(0, x)\;=\;h_0^\wedge(x), &\forall\, x \in \{ 0, N \}\,.
\end{cases}
\end{equation}
Note that the matrix corresponding to the operator $c(x, x+1)\Delta_x$ is the following $N-1$ by $N-1$ matrix $\hat A$ given by
{\footnotesize
\begin{equation*}
\begin{bmatrix}
    -2c(1, 2) & c(1,2) & 0  & 0 & \dots  & 0 & 0& 0\\
    c(2,3) & -2c(2,3) & c(2,3) & 0 & \dots  & 0 & 0& 0\\
    0 & c(3, 4) & -2c(3,4) & c(3,4) & \dots  & 0 & 0& 0\\
    \vdots & \vdots & \vdots& \vdots & \vdots& \vdots    & \ddots  \\
     0 & 0 &0 & 0 & \dots  & c(N-2,N-1)& -2c(N-2,N-1) & c(N-2,N-1)\\
    0 & 0 &0 & 0 & \dots  & 0& c(N-1,N) & -2c(N-1,N)
\end{bmatrix}\,,
\end{equation*}
} i.e.
\begin{equation}\label{def:hatA}
\begin{cases}
\hat A(1, 1)=-2c(1, 2),\; \hat A(1, 2)=c(1, 2), \\
\hat A(x, x-1)=c(x,x+1),\,
\hat A(x, x)=-2c(x,x+1),\,
\hat A(x, x+1)=c(x,x+1),\; 1<x<N-1\,,\\
\hat A(N-2, N-1)=c(N-1, N),\;
\hat A(N-1, N-1)=-2c(N-1, N),\\
\end{cases}
\end{equation}
and all other  unmentioned entries are zero. 
In order to apply the method in \cite[Lemma 12.2]{LPWMCMT} to obtain an orthogonal basis,  we first define a probability measure $\nu$ on $\{1, 2 \cdots, N-1\}$ satisfying
\begin{equation}\label{detailconds}
 \nu(x)c(x, x+1)\;=\; \nu(x+1)c(x+1, x+2)\,, \quad \forall\, x \in \{1, \cdots, N-2 \}\,,
\end{equation}
from which we obtain 
\begin{equation*}
 \nu(x)
\;=\;
 \nu(1) \frac{c(1, 2)}{c(x,x+1)}\,,
\end{equation*}
and then 
\begin{equation}\label{def:measnu}
\nu(x)\;=\; \frac{r(x, x+1)}{\sum_{i=1}^{N-1} r(i, i+1)}\,.
\end{equation}
 Our next step is to prove the existence of  an orthonormal basis $(\fg_i)_{1 \le i \le N-1}$ with corresponding eigenvalue $ -\gk_i$ of the operator $c(x,x+1)\Delta_x$, i.e.
\begin{equation}\label{hatA:eigfunval}
\begin{cases}
 c(x,x+1)\Delta_x \fg_i\;=\;-\gk_i \fg_i (x), \text{ for } x \in \{1, 2, \cdots, N-1 \}\,,\\
 \langle \fg_i, \fg_j\rangle_\nu\;=\; \delta_{i, j}\,.
\end{cases}
\end{equation}
Assuming the existence of such a basis, we know that 
 $\{e^{-\gk_i t} \fg_i\}_{1 \le i \le N-1}$ is a basis of the  operator in \eqref{eq:randheat}.
Hence, we have
\begin{equation}\label{sol:heightfun}
f(t, x)\;=\; \bE[h_t^\wedge(x)]\;=\;\sum_{i=1}^{N-1} \langle h_0^\wedge, \fg_i \rangle_\nu \cdot  e^{-\gk_i t} \fg_i(x)\,.
\end{equation}

\begin{lemma} Such a basis  $\{ \fg_i\}_{1 \le i \le N-1}$ exists.
\end{lemma}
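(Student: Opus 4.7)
The plan is to reduce the problem to the spectral theorem for real symmetric matrices via a simple conjugation argument, exploiting the detailed balance condition \eqref{detailconds} built into the definition of $\nu$.

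First, I would view the operator $L f(x) \colonequals c(x,x+1) \Delta_x f(x)$ as a linear endomorphism of the $(N-1)$-dimensional real vector space $V \colonequals \{f : \lint 0, N \rint \to \bbR \ : \ f(0) = f(N) = 0\}$, whose matrix in the canonical basis is precisely $\hat A$ given by \eqref{def:hatA}. Equip $V$ with the inner product $\langle f,g \rangle_\nu \colonequals \sum_{x=1}^{N-1} \nu(x) f(x) g(x)$; this is positive definite since $\nu(x) > 0$ for every $x$.

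The key step is to check that $L$ is self-adjoint with respect to $\langle \cdot,\cdot \rangle_\nu$. Starting from
\begin{equation*}
\langle Lf, g \rangle_\nu \;=\; \sum_{x=1}^{N-1} \nu(x) c(x,x+1)\bigl[f(x+1) + f(x-1) - 2f(x)\bigr] g(x),
\end{equation*}
I would perform an index shift on the first sum ($y = x+1$) and then use the detailed balance identity $\nu(y-1) c(y-1,y) = \nu(y) c(y,y+1)$ from \eqref{detailconds} to rewrite it as $\sum_y \nu(y) c(y,y+1) f(y) g(y-1)$; the boundary terms vanish because $f$ and $g$ are zero at $0$ and $N$. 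The second sum is already in the desired form after relabelling, and together they produce exactly $\langle f, Lg\rangle_\nu$.

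Once self-adjointness is established, the spectral theorem for self-adjoint operators on a finite-dimensional real inner product space immediately yields an orthonormal basis $(\fg_i)_{1 \le i \le N-1}$ of $V$ consisting of real eigenvectors of $L$ with real eigenvalues $(-\gk_i)_{1 \le i \le N-1}$, which is precisely \eqref{hatA:eigfunval}. Equivalently, and perhaps more transparently, one can conjugate $\hat A$ by the diagonal matrix $D \colonequals \mathrm{diag}(\sqrt{\nu(x)})$: the detailed balance condition \eqref{detailconds} makes $\tilde A \colonequals D \hat A D^{-1}$ a genuine real symmetric matrix (its off-diagonal $(x,x-1)$ and $(x-1,x)$ entries both equal $\sqrt{\nu(x)\nu(x-1) \cdot c(x,x+1) c(x-1,x)}$), so the usual spectral theorem for symmetric matrices gives an orthonormal eigenbasis $(\tilde \fg_i)_i$ of $\tilde A$ in the standard inner product, and $\fg_i \colonequals D^{-1} \tilde \fg_i$ provides the required eigenbasis of $\hat A$ that is orthonormal with respect to $\langle \cdot,\cdot\rangle_\nu$.

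There is essentially no obstacle here beyond carefully handling the boundary terms in the summation by parts and verifying that the off-diagonal entries of $\tilde A$ are symmetric under \eqref{detailconds}; both are direct computations. The content of the lemma is entirely the reversibility encoded in \eqref{detailconds}.
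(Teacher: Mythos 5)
Your proposal is correct and, in its second formulation, is exactly the paper's argument: the paper conjugates $\hat A$ by $D^{1/2}$ with $D=\mathrm{diag}(\nu(1),\dots,\nu(N-1))$, observes that detailed balance \eqref{detailconds} makes the result symmetric, diagonalizes, and pulls the orthonormal basis back via $\fg_i = D^{-1/2}\hat g_i$, checking $\langle\cdot,\cdot\rangle_\nu$-orthonormality just as you do. One small slip: the common off-diagonal entry of the symmetrized matrix is $\sqrt{c(x-1,x)\,c(x,x+1)}$, not $\sqrt{\nu(x)\nu(x-1)\,c(x,x+1)c(x-1,x)}$, though the symmetry itself (the only thing that matters) does follow from \eqref{detailconds} as you claim.
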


\begin{proof}
We follow the approach  in \cite[Lemma 12.2]{LPWMCMT} to provide a proof for the  sake of completeness. Recalling \eqref{def:hatA}, 
we define another 
matrix $\hat B$ by
\begin{equation*}\label{def:hatB}
 \hat B(x, y)\; \colonequals \; \nu(x)^{1/2} \hat A(x, y) \nu (y)^{-1/2}, \quad \forall\, x, y \in \{1, \cdots, N-1 \}\,.
\end{equation*}
By \eqref{def:hatA} and \eqref{detailconds}, we know that 
$\hat B(x, y)=\hat B(y, x)$, and thus  $\hat B$ is diagonalizable. 
Let $$(\hat g_i)_{1\le i \le N-1}:\, \{1, \cdots, N-1 \} \mapsto \bbR $$ be the orthonormal eigenfunctions of $\hat B$ with 
 corresponding eigenvalues $(-\gk_i)_{1 \le i \le N-1}$ satisfying 
 \begin{equation}\label{eig:randlaplace}
 -\gk_1 \;\ge\; -\gk_2 \;\ge\; \cdots \;\ge\; -\gk_{N-1}\,,
 \end{equation}
 i.e. 
 \begin{equation*}
 \begin{cases}
 \hat B \hat g_i \;=\; -\gk_i \hat g_i\,, &\text{ for } i \in \{1, 2, \cdots, N-1 \},\\
 \langle\hat g_i, \hat g_j\rangle \; =\; \delta_{i, j}\,, &\text{ for } i, j \in \{1, 2, \cdots, N-1 \}\,,
 \end{cases}
 \end{equation*}
where $\langle \cdot , \cdot \rangle$ is the usual inner product in $\bbR^{N-1}$. 
Moreover, 
writing $D \colonequals  \mathrm{diag}(\hat \nu(1), \cdots, \hat \nu(N-1))$, we have
\begin{equation*}
\hat B=  D^{1/2}\hat A D^{-1/2}
\end{equation*}
and thus $\hat B \hat g_i= D^{1/2}\hat A D^{-1/2} \hat g_i=-\gk_i \hat g_i$, which implies 
\begin{equation*}
\hat A D^{-1/2}\hat g_i \;=\; -\gk_i D^{-1/2} \hat g_i\,.
\end{equation*}
Viewing $\hat g_i$ as a vector in $\bbR^{N-1}$, we have 
\begin{equation*}
\delta_{i, j}\;=\; 
\langle \hat g_i, \hat g_j \rangle
\;=\; 
\hat g_i^T \hat g_j
\;=\;
  (D^{-1/2}\hat g_i )^T D D^{-1/2} \hat g_j
  \;=\;
  \langle D^{-1/2}\hat g_i , D^{-1/2}\hat g_j \rangle_{\nu}\,,
\end{equation*}
where $\hat g_i^T$ denotes the transpose of the vector $\hat g_i$. Therefore $$\fg_i \colonequals D^{-1/2} \hat g_i$$ is an eigenfunction of $\hat A$ with eigenvalue $-\gk_i$ for all $i \in \{1, 2, \cdots N-1 \}$ and satisfies \eqref{hatA:eigfunval}.
\end{proof}

\begin{lemma}\label{lema:appendeigvlw}
 Concerning the eigenvalues in \eqref{eig:randlaplace},  we have
\begin{equation}
0\;>\;-\gk_1\;>\;-\gk_2\;>\; \cdots \;>\;-\gk_{N-1}\,.
\end{equation}
\end{lemma}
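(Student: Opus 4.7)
The plan is to exploit that $\hat A$ is similar to the symmetric matrix $\hat B$ (via the diagonal conjugation already carried out in the excerpt), hence equivalently self-adjoint with respect to the $\nu$-weighted inner product $\langle\cdot,\cdot\rangle_\nu$. It then suffices to prove two things: (i) every eigenvalue of $-\hat A$ is strictly positive; and (ii) every eigenspace is one-dimensional.

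For (i): the reversibility condition \eqref{detailconds} combined with the explicit form \eqref{def:measnu} of $\nu$ immediately gives the remarkable simplification that $\nu(x)c(x,x+1)$ is a strictly positive constant $a$, independent of $x\in\{1,\ldots,N-1\}$. Given $f:\{1,\ldots,N-1\}\to\bbR$, extended by $f(0)=f(N)=0$, a routine summation by parts then collapses the quadratic form to
\begin{equation*}
-\langle f,\hat A f\rangle_\nu \;=\; a\sum_{y=1}^{N}\bigl[f(y)-f(y-1)\bigr]^2\,.
\end{equation*}
This expression is non-negative and vanishes only when $f$ is constant on $\{0,\ldots,N\}$; the boundary conditions then force $f\equiv 0$. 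Hence $-\hat A$ is positive definite, and every eigenvalue $\gk_i$ is strictly positive.

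For (ii): the matrix $\hat B$ is symmetric tridiagonal with strictly positive off-diagonal entries $\hat B(x,x+1)=\nu(x)^{1/2}c(x,x+1)\nu(x+1)^{-1/2}$. Hence for any eigenpair $(\lambda,\hat g)$ the eigenvalue equation
\begin{equation*}
\hat B(x,x-1)\hat g(x-1)+\hat B(x,x)\hat g(x)+\hat B(x,x+1)\hat g(x+1)\;=\;\lambda\hat g(x)
\end{equation*}
is a three-term recurrence which, with the fictitious boundary convention $\hat g(0)=0$, determines $\hat g(x+1)$ uniquely in terms of $\hat g(x)$ and $\hat g(x-1)$. The whole vector is therefore pinned down by the single scalar $\hat g(1)$, so each eigenspace has dimension at most one and all eigenvalues are distinct. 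Combining (i) and (ii) gives the desired chain $0>-\gk_1>-\gk_2>\cdots>-\gk_{N-1}$.

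No step presents a serious obstacle; the only nontrivial input is recognising that detailed balance together with the explicit formula for $\nu$ forces $\nu(x)c(x,x+1)$ to be constant in $x$, which is what reduces the weighted Dirichlet form to the clean sum of squared discrete gradients above. Simplicity is then the classical Jacobi-matrix argument, which relies on the strict positivity of $c(x,x+1)$.
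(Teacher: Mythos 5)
Your proposal is correct, and it splits the lemma into the same two claims as the paper (strict positivity of the $\gk_i$, and simplicity of the spectrum), but it proves the first claim by a genuinely different route. For positivity, the paper uses a discrete maximum principle: it evaluates the eigenvalue equation $c(x,x+1)\Delta_x\fg_i=-\gk_i\fg_i(x)$ at a point where $\fg_i$ attains its (positive) maximum, concludes $-\gk_i\le 0$, and rules out $\gk_i=0$ because that would force $\fg_i$ to be constant, contradicting $\fg_i(0)=0<\fg_i(1)$. You instead observe that $\nu(x)c(x,x+1)=\bigl(\sum_i r(i,i+1)\bigr)^{-1}$ is constant, so the $\nu$-weighted quadratic form of $-\hat A$ collapses by summation by parts to a positive multiple of $\sum_{y=1}^N[f(y)-f(y-1)]^2$ under the Dirichlet boundary conditions $f(0)=f(N)=0$; positive definiteness then gives $\gk_i>0$ for every eigenvalue (which are real by the similarity to the symmetric $\hat B$ established just before). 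Both arguments are sound; the maximum-principle route avoids any computation with $\nu$, while your Dirichlet-form route is arguably more structural and makes the positive definiteness of $-\hat A$ explicit rather than just the positivity of its eigenvalues. For simplicity of the eigenvalues, your argument and the paper's are essentially identical: the tridiagonal three-term recurrence with strictly positive off-diagonal entries and the convention $\fg(0)=0$ pins each eigenfunction down up to the scalar $\fg(1)$, so every eigenspace is one-dimensional, and diagonalizability of $\hat B$ then forces all $N-1$ eigenvalues to be distinct. I see no gap.
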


\begin{proof}
For convention, we define $\fg_i(0)=\fg_i(N) \colonequals 0$ for all $1 \le i \le N-1$. Since
 for all $1 \le x\le N-1$,  
\begin{equation}\label{eigfun:hatAform}
c(x,x+1) \left[ \fg_i(x-1)+ \fg_i(x+1)-2\fg_i(x) \right]\;=\;-\gk_i \fg_i(x)
\end{equation}
 we have
 \begin{equation}\label{hatg:iterate}
 \fg_i(x+1)\;=\;\left(2-\frac{\gk_i}{c(x,x+1)} \right) \fg_i(x)-\fg_i(x-1)\,.
 \end{equation}
By \eqref{hatg:iterate} we know that $\fg_i(1)\neq 0$. Otherwise $\fg_i \equiv 0$, a contradiction.
Without lost of generality, we   always assume $\fg_i(1)>0$.
Furthermore, if there exists $x \in \{1, 2, \cdots, N-1, N \}$ such that $\fg_i(x-1)=\fg_i(x)=0$, then $\fg_i \equiv 0$ which is a contradiction to $\fg_i$ being an eigenfunction.

Now we argue that $\gk_i>0$.  Take $x_i \in \{ 1, \cdots, N-1\}$ such that 
\begin{equation*}
\fg_i(x_i)\;=\;\max_{0 \le x \le N}\, \fg_i(x)>0\,.
\end{equation*}
Plugging $x_i$ in \eqref{eigfun:hatAform},  we obtain that $-\gk_i \le 0$. While if $-\gk_i=0$, then by \eqref{eigfun:hatAform} we have $\fg_i(x) \equiv \fg_i(x_i)$ for all $x \in \{ 0, 1, \cdots, N\}$ which is a contradiction to  $0=\fg_i(0)<\fg_i(1)$.  Therefore, we have $-\gk_i<0$.

Now we show that all $(-\gk_i)_{1 \le i \le N-1}$ are different. It is sufficient to consider $\fg_i/\fg_i(1)$. Observe that 
 by \eqref{hatg:iterate}, $-\gk_i$ determines the eigenfunction $\fg_i/\fg_i(1)$. Since the eigenfnctions $(\fg_i/\fg_i(1))_{1\le i\le N-1}$ are different, we obtain that
\begin{equation*}
0\;>\;-\gk_1\;>\;-\gk_2\;>\; \cdots\;>\; -\gk_{N-1}\,.
\end{equation*}
\end{proof}

For any prefixed integer $K_0$, 
 we now  turn to estimate $(\gk_i)_{1 \le i \le K_0}$, and their corresponding shapes of the eigenfunction $(\fg_i)_{1 \le i \le K_0}$. 
For $\gk>0$, in view of \eqref{hatg:iterate}  we  define $\fg^{(\gk)}(0) \colonequals  0$, $\fg^{(\gk)}(1) \colonequals 1$  and for $x \ge 1$

\begin{equation}\label{2hatg:iterate}
 \fg^{(\gk)}(x+1)\;\colonequals \;\left(2-\frac{\gk}{c(x,x+1)} \right) \fg^{(\gk)}(x)-\fg^{(k)}(x-1)\,.
 \end{equation}
 Then  $-\gk$ is an eigenvalue of $\hat A$ if and only if  the iteration above provides $\fg^{(\gk)}(N)=0$. For  simplicity of notations, in the sequel we  always ignore "$(\gk)$" in $\fg^{(\gk)}$. Moreover, 
we define for $x \ge 1$
\begin{equation}\label{def:bb}
 \bb (\gk, x)
\;\colonequals \;-
\frac{\fg(x)-\fg(x-1)}{\fg(x-1)}\,,
\end{equation}
with the convention that  $ \bb (\gk,x)=\overline \infty$ if $\fg(x-1)=0$, 
and  $\overline \bbR \colonequals \bbR \cup \{\overline \infty\}$ being the  Alexandrov compactification of $\bbR$.
Thus $-\gk$ is an eigenvalue if and only if $\bb(\gk, N)=1$, since $\fg(N-1) \neq 0$. 
By \eqref{2hatg:iterate}, we have
\begin{equation}\label{iterate:hatb}
\bb(\gk, x+1)\;=\; \frac{\bb(\gk, x)}{1-\bb(\gk, x)}+\gk r(x,x+1)\,.
\end{equation}
Then, given a fixed $r>0$, we define $\hat\Xi^{(r)}: \bbR_+\times \overline \bbR \to \overline \bbR$ as
\begin{equation}\label{def:hatXi}
\hat \Xi^{(c)}(\gk, \bb)\;=\;\frac{\bb}{1-\bb}+\gk r\,.
\end{equation}
The function $\bb\mapsto \hat\Xi^{(r)}(\gk, \bb)$ may have zero, one or two fixed points depending on the values of $\gk$ and $\bb$, see Figure~\ref{fig:fixpoint2}.
\begin{figure}[H]
	\centering
	\begin{tikzpicture}[scale=0.5,smooth];
	\draw[->] (0,-1)--(0,8) node[right]{$y$};
	\node at (4,6){$\gk r > 4$};
	\draw[->] (-1,0)--(8,0) node[anchor=north]{$\bb$};

	\draw[black = solid,  very thick ] plot
	[domain=-1:0.8](\x,{(\x/(1-\x)+5)*0.8});

	\draw[black = solid,  very thick ] plot
	[domain=1.2:8](\x,{(\x/(1-\x)+5)*0.8});;

	\draw[dashed, black = solid,  very thick ] plot
	[domain=0:8](\x,{\x*0.8});

	\draw (0,0) node[anchor=north east]{\small $0$};

	\draw[densely dashed,blue] (1,7.5) -- (1,0);
	\node at (0.8,0)[below]{$1$};

	\draw[->] (10,-1)--(10,8) node[right]{$y$};
	\node at (14,6){$\gk r = 4$};
	\draw[->] (9,0)--(18,0) node[anchor=north]{$\bb$};

	\draw[black = solid,  very thick ] plot
	[domain=9:10.8](\x,{((\x-10)/(11-\x)+4)*0.8});

	\draw[black = solid,  very thick ] plot
	[domain=11.2:18](\x,{((\x-10)/(11-\x)+4)*0.8});;

	\draw[dashed, black = solid,  very thick ] plot
	[domain=10:18](\x,{(\x-10)*0.8});

	\draw (10,0) node[anchor=north east]{\small $0$};

	\draw[densely dashed,blue] (11,7.5) -- (11,0); \node at (10.8,0)[below]{$1$};

	\draw[->] (20,-1)--(20,8) node[right]{$y$};
	\node at (25,6){$\gk r < 4$};
	\draw[->] (19,0)--(29,0) node[anchor=north]{$\bb$};
	\draw[black = solid,  very thick ] plot
	[domain=19:20.8](\x,{((\x-20)/(21-\x)+3)*0.8});

	\draw[black = solid,  very thick ] plot
	[domain=21.2:29](\x,{((\x-20)/(21-\x)+3)*0.8});

	\draw[dashed, black = solid,  very thick ] plot
	[domain=20:28](\x,{(\x-20)*0.8});

	\draw (20,0) node[anchor=north east]{\small $0$};

	\draw[densely dashed,blue] (21,7.5) -- (21,0); \node at (20.8,0)[below]{$1$};
	\end{tikzpicture}
	\caption{In the figures above,  solid lines depict the function $\hat\Xi^{(r)}(\bb,\gk)$ with $\gk>0$ fixed,  the black dashed lines stand for $y=\bb$, and the blue dashes lines are $\bb=1$. There are  three cases about the number of fixed points of the map $\bb \mapsto \hat\Xi^{(r)}(\gk, \bb)$  according to the relation between $\gk r$ and $4$ as shown above.}
	\label{fig:fixpoint2}
\end{figure}
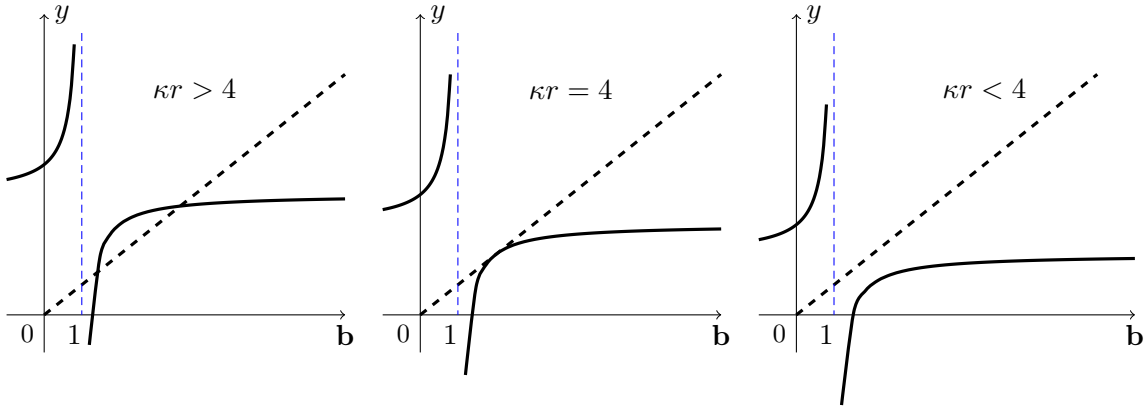

\noindent
If $\bb\mapsto \hat \Xi^{(r)}(\gk,  \bb)$ has fixed points $\bb_1$ and $\bb_2$ (not necessarily distinct) such that  $ \bb_1\le \bb_2$, 
we define $I(\gk, r)=[\bb_1, \bb_2]$.  Otherwise, set $I(\gk, r)=\varnothing$ .
We then define the ``angle mapping'' function
\begin{equation}\label{def:hatvarphi}
\hat \varphi(r,\gk,\theta)\;:=\; \inf\{ \theta'\ge \theta
+ \pi \ind_{I(\gk, r)}(\tan \theta)
\ : \ \tan \theta'=  \hat \Xi^{(r)}(\gk,\tan \theta) \}\,.
\end{equation}
We now recursively define  an ``angle'' $\theta(\gk, x)$ by setting $\theta(\gk, 1)= \pi/2$ (due to $\bb(\gk, 1)=\overline \infty$) and for $x \in  \lint 1, N \rint$,
\begin{equation}\label{def:hatmaptheta}
\hat \theta(\gk,x+1)\;:=\;  \hat \varphi(r(x, x+1), \gk,\theta(\gk,x))
\end{equation}
with the convention $\tan(\pi/2+k\pi)=\overline \infty$ for $k\in \bbZ$.
Similar to  \cite[Lemma 2.4]{Yang2024SpectralLight}, we have:

\begin{lemma}\label{lema:hatthetacont}
	The map
	$\gk \mapsto \hat \theta(\gk,x)$, defined in \eqref{def:hatmaptheta}, is continuous and strictly increasing for any $x \in \lint 2, N \rint$.
\end{lemma}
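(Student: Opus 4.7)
The plan is to proceed by induction on $x \in \lint 1, N \rint$, using the recursive definition $\hat\theta(\gk, x+1) = \hat\varphi(r(x,x+1), \gk, \hat\theta(\gk,x))$ together with the initialization $\hat\theta(\gk, 1) = \pi/2$. For the base case $x = 2$, since $\tan(\pi/2) = \overline\infty$ and $\lim_{\bb \to \overline\infty} \bb/(1-\bb) = -1$, we obtain $\hat\Xi^{(r(1,2))}(\gk, \overline\infty) = \gk r(1,2) - 1$. The fixed points of $\hat\Xi^{(r)}(\gk, \cdot)$ are finite roots of $\bb^2 - \gk r\bb + \gk r = 0$, so $\overline\infty \notin I(\gk, r)$ and the indicator in \eqref{def:hatvarphi} vanishes. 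Hence $\hat\theta(\gk, 2) = \arctan(\gk r(1,2) - 1) + \pi$, which is manifestly continuous and strictly increasing in $\gk$.

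For the inductive step, the heart of the matter is to establish that $(\gk, \theta) \mapsto \hat\varphi(r, \gk, \theta)$ is continuous on $\bbR_+ \times \bbR$, strictly increasing in $\gk$ for each fixed $\theta$, and non-decreasing (and in fact strictly increasing on the relevant range) in $\theta$ for each fixed $\gk$. Granting this, the composition $\hat\theta(\gk, x+1) = \hat\varphi(r(x,x+1), \gk, \hat\theta(\gk, x))$ inherits continuity and strict monotonicity from the inductive hypothesis. Strict monotonicity in $\gk$ follows from the identity $\partial_\gk \hat\Xi^{(r)}(\gk, \bb) = r > 0$, which forces the target value $\tan\theta'$ to strictly increase with $\gk$; provided we are on a fixed branch of $\arctan$, this transfers to strict monotonicity of $\theta'$.

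The main obstacle, and the reason for the careful $\pi \ind_{I(\gk, r)}(\tan\theta)$ correction in \eqref{def:hatvarphi}, is to preserve continuity across two delicate transitions: (a) when $\tan\theta$ crosses a value where the branch of $\arctan$ chosen by the infimum would jump, and (b) when the parameter $\gk r$ crosses the critical value $4$ at which the fixed-point interval $I(\gk, r)$ collapses or appears. In case (a), when $\tan\theta$ is a fixed point $\bb_\ast \in I(\gk,r)$ of $\hat\Xi^{(r)}(\gk, \cdot)$, one has $\tan\theta' = \tan\theta$ so the candidate angles differ by integer multiples of $\pi$, and the $+\pi$ offset prescribed by the indicator is exactly what yields the correct limit from the two sides. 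In case (b), at $\gk r = 4$ the interval $I(\gk, r)$ degenerates to the single point $\bb = 2$, so generically the indicator vanishes on both sides of the transition, and continuity follows from continuity of $\gk \mapsto \hat\Xi^{(r)}(\gk, \tan\theta)$.

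With these monotonicity/continuity properties of $\hat\varphi$ in hand, the induction closes: the composition $\gk \mapsto \hat\varphi(r(x,x+1), \gk, \hat\theta(\gk, x))$ is continuous and strictly increasing because it is an increasing function of two jointly continuous and strictly increasing quantities (in $\gk$), completing the inductive step. The hardest part is the bookkeeping for case (a) above, where one must verify that the infimum in \eqref{def:hatvarphi} selects the correct branch on both sides of a fixed point so that no jump is introduced; this follows the same strategy as \cite[Lemma 2.4]{Yang2024SpectralLight} and can be carried out by a case analysis on the sign of $\tan\theta - \bb_\ast$ and on whether $\gk r \lessgtr 4$.
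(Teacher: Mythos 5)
Your proposal is correct and takes essentially the approach the paper intends: the paper offers no proof of this lemma, only the remark that it is ``similar to'' Lemma 2.4 of \cite{Yang2024SpectralLight}, and your Pr\"ufer-angle induction --- including the correct identification of why the $+\pi\,\ind_{I(\gk,r)}(\tan\theta)$ term restores continuity and monotonicity exactly when $\tan\theta$ passes through a fixed point of $\hat\Xi^{(r)}(\gk,\cdot)$, and the correct base case $\hat\theta(\gk,2)=\arctan(\gk r(1,2)-1)+\pi$ --- is that argument. One minor point to watch when closing the induction: strict monotonicity of $\hat\varphi(r,\gk,\theta)$ in $\gk$ for fixed $\theta$ degenerates at $\tan\theta=1$ (there $\hat\Xi^{(r)}(\gk,1)=\overline\infty$ for every $\gk$), so the strictness of $\gk\mapsto\hat\theta(\gk,x+1)$ should be routed through strict monotonicity of $\hat\varphi$ in $\theta$ combined with non-strict monotonicity in $\gk$, which the composition step of your argument already accommodates.
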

Hence, $-\gk<0$ is an eigenvalue if and only if $\hat \theta(\gk, N)=\frac{\pi}{4}+ n \pi$ where $1 \le n \le N-1$ and $k \in \bbZ$. Now we head to estimate the  eigenvalues $(-\gk_i)_{1 \le i \le K_0}$.  We recall that $\bb(\gk, N)=\tan(\hat \theta(\gk, N))$.
By  \eqref{iterate:hatb}, we have 
\begin{equation}\label{iterate:laplace}
\bb(\gk, x+1)-\bb(\gk, x)\;=\;
\frac{\bb(\gk, x)^2}{1-\bb(\gk, x)}+\gk r(x, x+1)\,.
\end{equation}
We define $\fB(\gk, x) \colonequals \bb(\gk, x) N$,and $\gk=\ga  N^{-2}$. For simplicity of notations, we ignore $\gk$ in $\bb(\gk, x)$ and $\fB(\gk, x)$. The equation \eqref{iterate:laplace} implies
\begin{equation}\label{iterate:cB}
N \left[\fB(x+1)-\fB(x)   \right]
\;=\;
\frac{\fB(x)^2}{1-\fB(x)N^{-1}}+\ga r(x, x+1)\,,
\end{equation}
with initial condition $\fB(1) \colonequals \overline \infty$. 
With the same motivation as that in \cite[Eq.\ (3.11)]{Yang2024SpectralLight}, 
we define $\fA(x) \colonequals 1/\fB(x)$, i.e. $\bb(x) \colonequals  \frac{1}{\fA(x) N}$ with $\fA(1) \colonequals 0$. By \eqref{iterate:hatb}, we have
\begin{equation*}
\fA(x+1)\;=\;\frac{\fA(x)-\frac{1}{N}}{1+N \gk r(x, x+1) \fA(x)- \gk r(x, x+1)}
\end{equation*}
and then
\begin{equation}\label{rescale:fA}
N \left[\fA(x+1)-\fA(x) \right]\;=\;
-\frac{1+r(x, x+1) \ga \fA(x)^2-\frac{\ga}{N} r(x, x+1)\fA(x)}{1+\frac{r(x, x+1)}{N} \ga \fA(x)-\frac{\ga}{N^2} r(x, x+1)}\,.
\end{equation}
Then by the same strategy as that in  \cite[Section 3]{Yang2024SpectralLight}, we  use \eqref{rescale:fA} and \eqref{iterate:cB} alternatively to obtain 
Theorem \ref{th:lapaceeigenshape}--\eqref{lapace:eigval}. We leave the details for the readers.

\begin{theorem}\label{th:lapaceeigenshape}
  Under the assumption \eqref{LLN}, for any fixed positive integer $K_0$ and  for $1\le i \le K_0$ we have

\begin{equation}\label{lapace:eigval}
\lim_{N \to \infty} \,N^2\left\vert  \gk_i -2\left(1-\cos\left( \frac{\pi i}{N}\right) \right) \right\vert\;=\;0\,, 
\end{equation}
and 
\begin{equation}\label{lapalace:eigshape}
\lim_{N \to \infty} \sup_{x \in \lint 1, N\rint }  \left\vert \frac{\fg_i(x)}{\fg_i(1)} \sin \left( \frac{i\pi  }{N} \right) - \sin \left( \frac{i\pi x }{N} \right) \right\vert\;=\;0\,.
\end{equation}
\end{theorem}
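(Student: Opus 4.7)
My plan is to follow closely the approach of Section~3 of \cite{Yang2024SpectralLight}, adapting it to the Dirichlet problem for $c\Delta$ on $\lint 1, N-1\rint$. The central object is the angle map $\hat\theta(\gk,\cdot)$ from \eqref{def:hatmaptheta}: by Lemma~\ref{lema:hatthetacont} it is continuous and strictly increasing in $\gk$, so the eigenvalues $-\gk_i$ are pinned down as the unique $\gk$-values for which $\hat\theta(\gk,N)$ hits the appropriate multiple of $\pi$ coming from the Dirichlet condition $\fg(N)=0$. Reducing the problem to a fine analysis of this map, I would alternate between the two recursions \eqref{iterate:cB} for $\fB$ and \eqref{rescale:fA} for $\fA=1/\fB$, using $\fB$ when $|\fB|$ is bounded away from $1$ (so that the $\fA$-recursion would blow up) and using $\fA$ in the complementary regime.

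Setting $\gk = \ga N^{-2}$, a Taylor expansion of both recursions shows that they are discrete analogues of the single limiting Riccati ODE $\fA'(y) = -(1 + \ga\, \fA(y)^2)$, whose explicit solutions are $\fA(y) = \tfrac{1}{\sqrt\ga}\cot(\sqrt\ga (y-y_0))$. Under assumption~\eqref{LLN}, the partial sums $\sum_{x\le Ny} r(x,x+1)$ equal $Ny+o(N)$, so the integrated contribution of the $\ga r(x,x+1)$-terms in \eqref{iterate:cB} matches that of the homogeneous case $r\equiv 1$ up to $o(1)$. Consequently $\fA(\lfloor Ny\rfloor) = \tfrac{1}{\sqrt\ga}\cot(\sqrt\ga\, y) + o(1)$ uniformly on compact pieces bounded away from the zeros of $\cot$, and reading off the condition $\fA(N)=0$ gives $\sqrt\ga = i\pi + o(1)$. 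To upgrade this to the sharper statement \eqref{lapace:eigval}, I would compare $\hat\theta(\gk,N)$ directly against the explicit reference value $\gk^{(0)}_i := 2(1-\cos(\pi i/N))$ (for which the exact homogeneous eigenfunctions $\sin(i\pi x/N)$ are available), controlling the second-order error by the strict monotonicity of $\gk\mapsto \hat\theta(\gk,N)$ and the quantitative form of~\eqref{LLN}.

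The shape statement \eqref{lapalace:eigshape} then follows from the telescoping identity
\begin{equation*}
\frac{\fg_i(x+1)}{\fg_i(1)} \;=\; \prod_{y=1}^{x} \bigl(1-\bb(\gk_i,y)\bigr)
\end{equation*}
combined with the limits just obtained by taking logarithms and matching the resulting Riemann sum to $\log\bigl|\sin(i\pi x/N)/\sin(i\pi/N)\bigr|$, with appropriate sign-tracking. The main obstacle is the passage through the $i-1$ interior zeros of $\sin(i\pi x/N)$: near each zero one has $\fB\to 1$ and $\fA\to 0$, both recursions become singular, and $\fg_i$ itself changes sign. One needs to carefully switch parameterizations across each crossing, bookkeep the additional $\pi$ contributed to $\hat\theta$, and show that the $o(N)$ slack from~\eqref{LLN} does not compound across these finitely many (at most $K_0-1$) crossings. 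This crossing-management scheme is precisely the delicate piece handled in \cite[Section~3]{Yang2024SpectralLight}, and the authors' instruction to ``use \eqref{rescale:fA} and \eqref{iterate:cB} alternatively'' is a pointer to exactly that argument.
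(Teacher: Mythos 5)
Your proposal follows essentially the same route as the paper: the paper itself only sketches this proof, setting up the angle map $\hat\theta$, the alternating recursions \eqref{iterate:cB} and \eqref{rescale:fA}, and the deterministic comparison recursion \eqref{def:fBbreveB}, and then deferring the details to Sections 3--4 of \cite{Yang2024SpectralLight} --- which is exactly the Riccati/angle-map/crossing-management argument you describe. Your outline is correct and, if anything, more explicit than the paper's own treatment about where the limiting ODE $\fA'=-(1+\ga\fA^2)$ and the zero-crossing bookkeeping enter (the only nit being the index range in your telescoping product, which should run over $y\in\lint 2, x+1\rint$ since $\bb(\gk_i,1)=\overline\infty$).
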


We move to give a sketch for  \eqref{lapalace:eigshape}.
Note that the functions $(\fh_i)_{1 \le i\le N-1}$ given by 
\begin{equation}\label{fun:fg}
\fh_i(x)\; \colonequals \; \sin \left( \frac{i \pi x}{N} \right) \quad x \in \lint 0, N \rint\,,
\end{equation}
are the  eigenfunctions corresponding to the matrix $\hat A$ in \eqref{def:hatA} when $c(j-1,j) \equiv 1$, and their corresponding eigenvalues are 
\begin{equation*}
 -\overline \gk_i  \;\colonequals\; -2\left[1-\cos\left( \frac{i\pi}{N}\right) \right]\,.
\end{equation*}
To prove \eqref{lapalace:eigshape}, 
for $1\le i \le K_0$ we set
$$\tilde \fg_i \colonequals \frac{\sin\left(\frac{i \pi}{N} \right)}{\fg_i(1)} \fg_i\,. $$ 
Moreover, by \eqref{def:bb} and $\bb(\gk_i, x)=\fB(\gk_i, x)/N$, for $x \ge 2$ we have
\begin{equation}\label{rec:fgrandeigfun1st}
\tilde \fg_i(x) \;=\; \left[1- N^{-1} \fB \left(\gk_i,  x
\right) \right] \tilde \fg_i(x-1)\,,
\end{equation}
where we recall $\tilde \fg_i(1)= \sin \left( \frac{i \pi}{N}\right)$. To compare with the deterministic shape of $\fh_i$, 
we define $(\overline \fB(\overline \gk_i, x))_{1 \le x \le N}$ and $(\overline \bb(\overline \gk_i, x))_{1\le x \le N}$ with $\overline \bb(\overline \gk_i, x)=\overline \fB(\overline \gk_i, x)/N$ 
by setting $\overline \fB \left(\overline \gk_i, 1 \right)= \overline \infty$ and for $x \in \lint 1, N\rint$ (with the convention $1/0 \colonequals \overline \infty$ and $1/\overline \infty=0$), 
\begin{equation}\label{def:fBbreveB}
\begin{aligned}
\overline \bb(\overline \gk_i, x ) \;&\colonequals \;-\frac{\fh_i(x)-\fh_i(x-1)}{\fh_i(x-1)} \,,\\
\overline \fB\left(\overline \gk_i,  x+1\right) \;&\colonequals\;
\frac{  \overline \fB\left(\overline \gk_i, x\right)}{1-\overline \fB \left(\overline \gk_i, x\right)N^{-1 }}+ N \overline \gk_i\,,
\end{aligned}
\end{equation}
which are 
the corresponding deterministic versions of \eqref{def:bb} and \eqref{iterate:cB}  with $\gk_i$ replaced  by $\overline \gk_i$ and  $r(x, x+1)\equiv 1$.
Similar to \eqref{rec:fgrandeigfun1st}, we have
\begin{equation}\label{rec:fhdetereigfun1st}
\fh_i(x)\;=\; \left[1- N^{-1} \overline \fB \left(\overline \gk_i,  x \right) \right] \fh_i(x-1)\,.
\end{equation}
Using \eqref{rec:fgrandeigfun1st} and  \eqref{rec:fhdetereigfun1st},  we can apply the strategies in  \cite[Section 4]{Yang2024SpectralLight} to prove Theorem \ref{th:lapaceeigenshape}--\eqref{lapalace:eigshape}.  We leave the details for the readers.

\section{Proof of Proposition \ref{prop:tdmaxconf}}\label{app:propmaxconf}

The aim of this section is to prove Proposition \ref{prop:tdmaxconf}, based on Section \ref{appsec:heighfun} and 
 the censoring inequality established by Peres and  Winkler
\cite{peres2013can}. Recalling \eqref{def:Thetaspins}, 
a censoring scheme $\mathcal{C}: [0, \infty) \to \mathcal{P}(\Theta)$ is a deterministic c\`{a}dl\`{a}g function where $\mathcal{P}(\Theta)$ is the set of all subsets of $\Theta$. 
Recalling
\begin{equation*}
t_{\delta} \;=\;\left(1+ \delta \right)\frac{1}{2 \bar \gl_1}\log k\,,
\end{equation*}
we describe the 
 censoring scheme for the dynamics in terms of  height functions as in \cite[Section 8.2]{lacoin2016mixing}: 
\begin{equation*}
\cC(t)\colonequals 
\begin{cases} 
\emptyset &\text{ if } t \in [0, t_{\delta/2})\,,\\
\{(x_i, z) \in \Theta: x_i \colonequals \lceil i \delta N \rceil\,, \text{ for } i=1, \cdots, \bar K \colonequals \lfloor  1/\delta\rfloor -1\}  &\text{ if } t \in  [ t_{\delta/2},  t_{\delta})\,,\\
\emptyset &\text{ if } t \in [ t_{\delta}, \infty)\,.\\
\end{cases}
\end{equation*}
Let $(  h_t^{\wedge, \cC})_{t \ge 0}$ be the dynamics constructed by the graphical construction in Subsection \ref{subsec:graphconstr} with the following additional rule: an update at  a parallelogram centered at $(x, z)$ at time $t$ is performed if $(x, z) \not\in \cC(t)$.
Moreover, let
$( P_t^{\wedge, \cC})_{t \ge 0}$ denote its corresponding  marginal distribution.  
By   Peres–Winkler censoring inequality  \cite[Theorem 1.1]{peres2013can} and the contraction property (cf. \cite[Exercise 4.2]{LPWMCMT}), for all $t \ge t_{\delta}$ we have  
\begin{equation*}\label{contcen:ineq}
\Vert P_t^\wedge -\mu_{N,k} \Vert_{\TV}
\;\le\; 
\Vert  P_t^{\wedge, \cC} -\mu_{N,k} \Vert_{\TV}
\;\le\; 
 \Vert  P_{t_{\delta}}^{\wedge, \cC} -\mu_{N,k} \Vert_{\TV}\,.
\end{equation*}
Thus, 
in the remaining of this subsection it is  sufficient  to prove that for any $\gep \in (0, 1)$ and $\delta>0$, if $N$ is sufficiently large we have
\begin{equation}\label{censineq}
 \Vert  P_{t_{\delta}}^{\wedge, \cC} -\mu_{N,k} \Vert_{\TV}\;\le\; \gep\,.
\end{equation}
For the proof of \eqref{censineq},
 we first define the skeleton of $\xi \in \gO_{N, k}$ to be
\begin{equation*}
\forall i \in \left\{1, 2, \cdots, \bar K \right\}\,,\quad \quad \bar h^\xi(i) \;\colonequals\; h^\xi(x_i)\,,
\end{equation*}
where $x_i= \lceil i \delta N \rceil$ and $\bar K \colonequals \lfloor  1/\delta\rfloor -1$. As in \cite[Section 8.2]{lacoin2016mixing}, our strategy is to show the following: 
\begin{itemize}
\item At time $t_{\delta/2}$, the distribution of the skeleton of $ h_{t_{\delta/2}}^\wedge$ is close to  its equilibrium. 

\item  In the time interval $[t_{\delta/2}, t_{\delta})$,  we  put the configurations in the segments between
skeleton points to equilibrium.
\end{itemize}
For the first step above, 
we need the following estimate about the height function.

\begin{lemma}\label{lema:upbdheight}
Under the assumptions \ref{assum:2} and \ref{assum:nparticles}, for all $\xi \in \gO_{N, k}$, for all $t \ge t_{\delta/2}$ and all $N$ sufficiently large we have 
\begin{equation}
\max_{x \in \lint 1, N \rint}\, \bE\left[ h_t^\xi(x) \right] \;\le\; 2^6  \bar \Upsilon_N^{-1/2}(K_0+1)\cdot k e^{-\gk_1 t}\,,
\end{equation}
where $\gk_1$ is defined in \eqref{hatA:eigfunval} and $K_0 \colonequals \left\lceil \left( 2+ \frac{3}{\varrho}\right)^{1/2}  \right\rceil $.
\end{lemma}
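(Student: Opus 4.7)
The plan is to exploit the spectral decomposition of $f(t,x) := \bE[h_t^\xi(x)]$ derived in Appendix \ref{appsec:heighfun}, combined with the sharp asymptotics for the low eigenfunctions in Theorem \ref{th:lapaceeigenshape} and Parseval/Cauchy--Schwarz to control the tail. Since $f$ satisfies the discrete heat equation \eqref{eq:randheat}, expanding in the $L^2(\nu)$-orthonormal basis $(\fg_i)_{1\le i\le N-1}$ gives
\begin{equation*}
f(t,x) \;=\; \sum_{i=1}^{N-1} c_i \, e^{-\gk_i t}\, \fg_i(x), \qquad c_i \,\colonequals\, \langle h_0^\xi, \fg_i\rangle_\nu.
\end{equation*}
Because $|h_0^\xi(x)| \le k$, Parseval yields $\sum_i c_i^2 = \|h_0^\xi\|_\nu^2 \le k^2$, so in particular $|c_i| \le k$ for every $i$.

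I would then split the sum at $i=K_0$. For the low modes $i \le K_0$, Theorem \ref{th:lapaceeigenshape}--\eqref{lapalace:eigshape} provides
\begin{equation*}
\fg_i(x) \;=\; \frac{\fg_i(1)}{\sin(i\pi/N)} \bigl( \sin(i\pi x/N) + o(1)\bigr).
\end{equation*}
Combined with $\|\fg_i\|_\nu = 1$ and an Abel summation argument using \eqref{LLN} (which shows $\sum_x \nu(x) \sin^2(i\pi x/N) = \tfrac12 + o(1)$), this forces $|\fg_i(1)|/\sin(i\pi/N) \le \sqrt 2 + o(1)$ and hence $\sup_x |\fg_i(x)| \le 2$ for $N$ large. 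This gives a low-mode contribution bounded by $2K_0\, k\, e^{-\gk_1 t}$.

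For the tail $i > K_0$, Cauchy--Schwarz together with the identity $\sum_i \fg_i(x)^2 = 1/\nu(x)$ (obtained by applying Parseval to $\delta_x/\sqrt{\nu(x)}$) yield
\begin{equation*}
\Big| \sum_{i>K_0} c_i e^{-\gk_i t} \fg_i(x)\Big|^2 \;\le\; k^2 e^{-2\gk_1 t}\, e^{-2(\gk_{K_0+1}-\gk_1)t} \cdot \frac{1}{\nu(x)}.
\end{equation*}
Assumption \ref{assum:2} combined with \eqref{LLN} produces $1/\nu(x) \le 2N/\bar\Upsilon_N$. Using $\gk_i \sim \pi^2 i^2/N^2$ (Theorem \ref{th:lapaceeigenshape}) and $\bar\gl_1 \sim \pi^2/\bar N^2$, for $t \ge t_{\delta/2}$ we have $(\gk_{K_0+1}-\gk_1) t \ge \tfrac{K_0^2}{2}(1+o(1))\log k$. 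With Assumption \ref{assum:nparticles} giving $N \le C k^{1/\varrho}$, the choice $K_0^2 \ge 2 + 3/\varrho$ ensures that $\sqrt{N}\,e^{-(\gk_{K_0+1}-\gk_1)t_{\delta/2}}$ is bounded, so the tail is dominated by $C k e^{-\gk_1 t}/\sqrt{\bar\Upsilon_N}$.

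Summing the two contributions yields $\max_x \bE[h_t^\xi(x)] \le 2K_0\, k\, e^{-\gk_1 t} + C k e^{-\gk_1 t}/\sqrt{\bar\Upsilon_N}$, which is at most $2^6(K_0+1)\bar\Upsilon_N^{-1/2}\, k\, e^{-\gk_1 t}$ since $\bar\Upsilon_N \to 0$ makes $1/\sqrt{\bar\Upsilon_N}$ dominant. The delicate step is obtaining the uniform-in-$x$ bound $|\fg_i(x)| \le 2$ for low modes: the shape estimate \eqref{lapalace:eigshape} only controls $\fg_i/\fg_i(1)$, and leveraging \eqref{LLN} via Abel summation to normalise the prefactor $\fg_i(1)/\sin(i\pi/N)$ is essential; otherwise the naive bound $|\fg_i(x)| \le 1/\sqrt{\nu(x)}$ would introduce an unwanted factor $\sqrt{N/\bar\Upsilon_N}$ and destroy the estimate.
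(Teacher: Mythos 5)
Your proposal is correct and follows the same overall architecture as the paper's proof: expand $\bE[h_t^\xi(x)]$ in the $\nu$-orthonormal eigenbasis $(\fg_i)$ of the weighted Laplacian from Appendix \ref{appsec:heighfun}, bound the coefficients by $k$, split the spectral sum at $K_0$, and use the gap between $\gk_1$ and $\gk_{K_0+1}$ together with $N\le C k^{1/\varrho}$ to kill the tail. The two sub-estimates, however, are carried out differently, and both of your variants are valid and in fact slightly sharper. For the low modes, the paper normalizes $\fg_i(1)$ by lower-bounding $1=\langle\fg_i,\fg_i\rangle_\nu$ over the set $\cE_i$ where $|\sin(i\pi x/N)|\ge 1/4$, using only the pointwise bound $\nu(x)\ge\bar\Upsilon_N/(2N)$ from Assumption \ref{assum:2}; this yields $\sup_x|\fg_i(x)|\le 2^6\bar\Upsilon_N^{-1/2}$. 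Your Abel-summation argument exploits \eqref{LLN} to show $\sum_x\nu(x)\sin^2(i\pi x/N)=\tfrac12+o(1)$, which pins down $|\fg_i(1)|/\sin(i\pi/N)=\sqrt2+o(1)$ and gives the constant bound $\sup_x|\fg_i(x)|\le 2$ — strictly stronger, and it makes the low-mode contribution $O(K_0\,k\,e^{-\gk_1 t})$ rather than $O(\bar\Upsilon_N^{-1/2}K_0\,k\,e^{-\gk_1 t})$ (either suffices for the stated bound, since $\bar\Upsilon_N\to0$). For the tail, the paper bounds each $|\fg_i(x)|\le\sqrt{2N/\bar\Upsilon_N}$ individually and sums $N$ terms, paying a factor $N^{3/2}$; your Cauchy--Schwarz with the completeness identity $\sum_i\fg_i(x)^2=1/\nu(x)$ pays only $\sqrt{2N/\bar\Upsilon_N}$, so the requirement on $K_0$ is met with more room. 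Both routes land within the stated constant $2^6\bar\Upsilon_N^{-1/2}(K_0+1)$.
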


\begin{proof}
As in \eqref{sol:heightfun}, we have 
\begin{equation*}
\bE \left[ h_t^\xi(x)\right]
\;=\;
\sum_{i=1}^{N-1} e^{- \gk_i t} \fg_i(x) \langle \fg_i, h_0^\xi \rangle_{\nu}\,,
\end{equation*}
and then

\begin{equation}\label{upbd:heightexp}
\begin{aligned}
\left\vert \bE \left[ h_t^\xi(x)\right]\right\vert
\;&\le \;
\sum_{i=1}^{N-1} e^{- \gk_i t} \vert \fg_i(x) \vert \cdot \left \vert \langle \fg_i, h_0^\xi \rangle_{\nu}\right\vert
\;\le\;
\sum_{i=1}^{N-1}k e^{- \gk_i t} \vert \fg_i(x) \vert\,,
\end{aligned}
\end{equation}
where the last inequality is by Cauchy-Schwarz inequality and 
\begin{equation*}
\vert \langle h_0^\xi, h_0^\xi \rangle_\nu \vert
\; \le \;
\max_{1\le y \le N-1}\left \vert  h^\xi_0(y) \right\vert 
\;\le\; k\,.
\end{equation*}

For an upper bound on \eqref{upbd:heightexp}, we first treat those terms with $1 \le i \le K_0$ based on Theorem \ref{th:lapaceeigenshape} and the following observation:
 for all $1 \le i \le K_0$, 
\begin{equation}\label{set:largevalue}
\# \cE_{i}\; \colonequals\;
\# \left\{j \in \lint 1, N-1\rint: \left\vert \sin \left(\frac{i \pi j}{N} \right) \right\vert \ge \frac{1}{4} \right\}\; \ge \; \frac{1}{16} N\,.
\end{equation}
Thus, by Theorem \ref{th:lapaceeigenshape} and for $x \in \cE_i$ we have
\begin{equation*}
\left\vert \frac{\fg_i(x)}{\fg_i(1)} \sin \left(\frac{i \pi}{N} \right) \right \vert \;\ge\; \frac{1}{8}\,,
\end{equation*}
and then
\begin{equation}\label{upbd:gi1}
1
\;=\; \langle \fg_i, \fg_i \rangle_\nu 
\;\ge \;
\sum_{x \in \cE_i} \nu(x) \fg_i(x)^2
\;\ge\;
\sum_{x \in \cE_i} \frac{r(x, x+1)}{\sum_{j=1}^{N-1} r(j, j+1)}  \frac{\fg_i(1)^2}{ 64 \sin^2\left(\frac{i \pi}{N} \right)}\,.
\end{equation}
By Assumption \ref{assum:2}, \eqref{LLN} and \eqref{set:largevalue}, we have
\begin{equation*}
\sum_{x \in \cE_i} \frac{r(x, x+1)}{\sum_{j=1}^{N-1} r(j, j+1)}  \;\ge\;  \bar \Upsilon_N  \frac{ \#   \cE_i}{2N} \;\ge\;    \frac{\bar \Upsilon_N}{32}\,,
\end{equation*}
and therefore \eqref{upbd:gi1} implies
\begin{equation}\label{upbd:fgi1}
\vert \fg_i(1)\vert 
\;\le\;
 \frac{  2^5}{\sqrt{ \bar \Upsilon_N}} \left\vert   \sin \left(  \frac{i \pi}{N  } \right) \right\vert \,.
\end{equation}
Then by \eqref{upbd:fgi1} and  Theorem \ref{th:lapaceeigenshape} for all $1 \le i \le K_0$ and all $x \in \lint 1, N\rint$ we have
\begin{equation*}
\vert \fg_i(x)  \vert \;\le\; 2 \left\vert \frac{ \fg_i(1) }{\sin\left( \frac{i \pi}{N} \right)} \right\vert \; \le\; 2^{6} \bar \Upsilon_N^{-1/2}\,,
\end{equation*}
and  by $\gk_i \ge \gk_1$, 
\begin{equation}\label{estimate:K0--}
\sum_{i=1}^{K_0}k e^{- \gk_i t} \vert \fg_i(x) \vert \;\le \;  2^{6} \bar \Upsilon_N^{-1/2} K_0 k e^{-\gk_1 t}\,. 
\end{equation}

For those terms in the r.h.s\ of \eqref{upbd:heightexp} indexed by 
$i >K_0$, by Assumption \ref{assum:2} and \eqref{LLN}
we have
\begin{equation*}
1\;=\; 
\langle \fg_i, \fg_i \rangle_\nu
\;=\;
\sum_{x=1}^{N-1} \frac{r(x, x+1)}{\sum_{j=1}^{N-1}r(j, j+1)} \fg_i(x)^2
\;\ge\;
\frac{\bar \Upsilon_N}{2N} \fg_i(x)^2\,,
\end{equation*}
and then 
\begin{equation}
\vert \fg_i(x)\vert \;\le\; \sqrt{2N \bar \Upsilon_N^{-1}}\,.
\end{equation}
Moreover, by Theorem \ref{th:lapaceeigenshape} and Lemma \ref{lema:appendeigvlw} we have for all $i>K_0$
\begin{equation*}
\gk_i > \frac{K_0^2 \pi^2}{N^2}  
\end{equation*}
and then for $t \ge t_{\delta/2}$
\begin{equation}\label{estimate:K0++}
\sum_{i=K_0+1}^{N-1} e^{- \gk_i t} \vert \fg_i(x) \vert 
\;\le \;
\sum_{i=K_0+1}^{N-1} e^{- \gk_i t} \sqrt{2N \bar \Upsilon_N^{-1}}
\;\le\;
\sqrt{2 \bar \Upsilon_N^{-1}} N^{3/2}  e^{-t \cdot K_0^2 \pi^2/N^2}
\;\le\; \sqrt{2 \bar \Upsilon_N^{-1}}
e^{-t \cdot \gk_1 }
\end{equation}
where the last inequality holds by choosing
\begin{equation}
K_0\;\ge\; \left[ 2+ \frac{3}{\varrho}\right]^{1/2} \,.
\end{equation}
Combining \eqref{estimate:K0--} and  \eqref{estimate:K0++},  we  conclude the proof.

\end{proof}

We need a version of \cite[Proposition 4.2]{lacoin2016mixing} which is the following.
 
 \begin{proposition}\label{prop:kpartdiseig}
 For all $t \ge t_{\delta/2}$, 
we have

\begin{equation}\label{tvdistance:coupeig}
d_{N, k}(t) \;\le\;  2^{14}K_0^2 k e^{-\gl_1 t}\,,
\end{equation}
where $K_0 \colonequals \left\lceil 2 \left(\frac{3}{\varrho}+1\right)^{1/2} \right\rceil.$

\end{proposition}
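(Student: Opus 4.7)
The plan is to follow the strategy of \cite[Proposition 4.2]{lacoin2016mixing}, adapted to the inhomogeneous setting with the help of the height-function spectral analysis developed in Appendix \ref{appsec:heighfun}.

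\emph{Step 1: reduction to a height-function estimate.} Starting from the coupling inequality \eqref{dist-coupling}, we have $d_{N,k}(t) \le \bP[T>t]$. By the monotonicity of the grand coupling (Proposition \ref{prop:attravtive}), $T>t$ is equivalent to $h_t^{\wedge} \neq h_t^{\vee}$, and since $h_t^{\wedge}(x) - h_t^{\vee}(x)$ is a non-negative integer for every $x$, Markov's inequality gives
\begin{equation*}
\bP[T>t] \;\le\; \bP\Bigl[\sum_{x=1}^{N-1}\bigl(h_t^{\wedge}(x)-h_t^{\vee}(x)\bigr)\ge 1\Bigr] \;\le\; \sum_{x=1}^{N-1}\bE\bigl[h_t^{\wedge}(x)-h_t^{\vee}(x)\bigr].
\end{equation*}

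\emph{Step 2: symmetrisation.} Using the spatial reflection $x \mapsto N+1-x$, which conjugates the dynamics started from $\vee$ to the one started from $\wedge$ (with the conductances read backwards), one obtains $\bE[h_t^{\vee}(x)]=-\bE[h_t^{\wedge}(N-x)]$. This reduces the task to bounding $\sum_{x}\bE[h_t^{\wedge}(x)]$.

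\emph{Step 3: spectral expansion.} Apply the representation \eqref{sol:heightfun}:
\begin{equation*}
\sum_{x=1}^{N-1}\bE[h_t^{\wedge}(x)] \;=\; \sum_{i=1}^{N-1} e^{-\gk_i t}\,\langle \fg_i, h_0^{\wedge}\rangle_\nu \cdot \sum_{x=1}^{N-1}\fg_i(x).
\end{equation*}
Split into low modes ($i \le K_0$) and high modes ($i > K_0$). For the low modes, combine the pointwise control on $\fg_i(1)$ from the proof of Lemma \ref{lema:upbdheight} with the shape estimate \eqref{lapalace:eigshape} showing $\fg_i(x)/\fg_i(1) \approx \sin(i\pi x/N)/\sin(i\pi/N)$. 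For the high modes use the spectral gap $\gk_i \ge \gk_{K_0+1} \gtrsim K_0^2\pi^2/N^2$ together with $\langle \fg_i,h_0^{\wedge}\rangle_\nu \le \|h_0^{\wedge}\|_{L^2(\nu)} \le k$ and the crude bound $|\fg_i(x)| \le \sqrt{2N/\bar \Upsilon_N}$; the factor $e^{-\gk_{K_0+1}t}$ at $t\ge t_{\delta/2}$ will then swallow every polynomial loss in $N$ and $\bar\Upsilon_N^{-1}$ provided $K_0$ is chosen large enough in terms of $\varrho$, which is precisely the content of the definition $K_0 = \lceil 2(3/\varrho+1)^{1/2}\rceil$. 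Finally replace $\gk_1$ by $\gl_1$ via Theorem \ref{th:lapaceeigenshape}\eqref{lapace:eigval} (which is the same asymptotics as Theorem \ref{th:gapshapeder}\eqref{gap:asym}).

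\emph{Main obstacle.} The naive approach of bounding $\sum_x \bE[h_t^{\wedge}(x)]$ by $N\max_x \bE[h_t^{\wedge}(x)]$ would combine a factor $N$ with the $\bar \Upsilon_N^{-1/2}$ already present in Lemma \ref{lema:upbdheight}, falling short of the stated bound by the factor $N\bar \Upsilon_N^{-1/2}$. The delicate point is therefore the mode-by-mode bookkeeping in Step 3: one must show that the sum $\sum_x \fg_i(x)\langle \fg_i,h_0^{\wedge}\rangle_\nu$ is of order $k$ per low mode (using oscillation cancellations inherited from the $\sin(i\pi x/N)$ shape and the triangular profile of $h_0^{\wedge}$), rather than $Nk\bar\Upsilon_N^{-1/2}$, and that the high-mode tail is genuinely negligible thanks to the large $e^{-\gk_i t}$ factor for $t\ge t_{\delta/2}$. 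Once these two estimates are in hand, collecting the $K_0$ low-mode contributions yields the bound $2^{14}K_0^2\, k\,e^{-\gl_1 t}$, finishing the proof.
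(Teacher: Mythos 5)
Your proposal takes a genuinely different route from the paper's proof, and it contains a fatal gap at Step~1 that the later steps cannot repair. The quantity you reduce to, $\sum_{x=1}^{N-1}\bE\bigl[h_t^{\wedge}(x)-h_t^{\vee}(x)\bigr]$, is of order $kN e^{-\gl_1 t}$, not $k e^{-\gl_1 t}$: the Markov inequality applied to the \emph{area} between the two extremal interfaces is intrinsically lossy by a factor $N$. This is already visible in the first mode of your own expansion. Both factors $\langle \fg_1,h_0^{\wedge}\rangle_\nu$ and $\sum_x\fg_1(x)$ are positive and of order $k$ and $N$ respectively when, say, $k\asymp N$: by the shape estimate \eqref{lapalace:eigshape} and the normalisation $\langle\fg_1,\fg_1\rangle_\nu=1$ one has $\fg_1(1)/\sin(\pi/N)\ge 1/2$, so $\fg_1(x)\ge \tfrac12\sin(\pi x/N)-o(1)$ is essentially sign-definite, $h_0^{\wedge}\ge 0$ is of order $k$ on a $\nu$-positive fraction of sites, and $\sum_x\sin(\pi x/N)\sim 2N/\pi$. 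There is no oscillation to exploit in the $i=1$ term, so the ``mode-by-mode bookkeeping'' you invoke cannot bring its contribution down to $O(k)$; and even for $i\ge 2$ the approximation \eqref{lapalace:eigshape} controls $\fg_i(x)$ only up to an error $o(1)$ \emph{per site}, which after summing over $x$ is $o(N\bar\Upsilon_N^{-1/2})$ — still far too large. What your argument can actually deliver is $d_{N,k}(t)\lesssim kN e^{-\gl_1 t}$, i.e.\ the classical Wilson-type first-moment upper bound, which only yields mixing by time $\gl_1^{-1}\log(kN)$ and misses the sharp constant that Proposition \ref{prop:kpartdiseig} is designed to capture.

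The paper avoids the factor $N$ by an entirely different mechanism: it labels the $k$ particles, couples each tagged pair $(X_t^i,Y_t^i)$ as a two-particle Markov chain with generator $\fL$ killed on the diagonal, diagonalises that chain via the antisymmetric products $u_{i,j}(x,y)=\tilde g_i(x)\tilde g_j(y)-\tilde g_i(y)\tilde g_j(x)$ with eigenvalues $-(\gl_i+\gl_j)$, and shows $\bP_{x_0,y_0}[X_t^i\neq Y_t^i]\le 2^{14}K_0^2e^{-\gl_1 t}$ with an $N$-independent prefactor (the slowest surviving mode is $\gl_0+\gl_1=\gl_1$, and the low-mode eigenfunctions $u_{i,j}$ are uniformly bounded by $2^{13}$). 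A union bound over the $k$ particles then produces exactly the factor $k$ in \eqref{tvdistance:coupeig}. If you want to salvage a first-moment-style argument you would have to replace the area functional by a per-particle coalescence event of this type; as written, your proof cannot reach the stated bound. (A secondary point: replacing $\gk_1$ by $\gl_1$ for \emph{all} $t\ge t_{\delta/2}$ cannot be justified from the asymptotics \eqref{lapace:eigval} alone, since $e^{|\gk_1-\gl_1|t}$ is unbounded in $t$; one needs the exact identity $\gk_1=\gl_1$ coming from the discrete summation intertwining the Dirichlet height-function operator with $\cL_{N,1}$, which your proposal does not establish.)
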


To prepare for the proof of Proposition \ref{prop:kpartdiseig},  now we adapt the coupling in \cite[ Proposition 6.5]{lacoin2016mixing} to provide a graphical construction which is not Markovian.
We label particles from $1$ to $k$, and each particle receives a different label  with $0$s representing empty sites. 
The coupling has the following rules (with an abuse of notation, we still denote the process as $(\eta_t^\xi)_{t \ge 0}$):  
for $\xi, \xi' \in \gO_{N, k}$, 
\begin{itemize}

\item if $\eta^\xi_t(x)\ne\eta^{\xi'}_t(x)$ and $\eta^{\xi
}_t(x+1)\ne\eta^{\xi'}_t(x+1)$, then the transition $\eta\to
\eta\circ\tau_{x, x+1}$
occurs independently with rate  $c(x, x+1)$ for each of the two processes;

\item if either $\eta^\xi_t(x)= \eta^{\xi'}_t(x)$ or $\eta
^{\xi}_t(x+1)= \eta^{\xi'}_t(x+1)$ (or both), then the transition
$\eta\to\eta\circ\tau_{x, x+1}$
occurs simultaneously for the two processes with rate $c(x, x+1)$.
\end{itemize}

Let $X^i_t:=(\eta^{\xi}_t)^{-1}(i)$ and $Y^i_t:=(\eta^{\xi
'}_t)^{-1}(i)$ denote the trajectory of the two particles labeled $i$ for
the two coupled dynamics.
The couple $(X^i_t,Y^i_t)_{t \ge 0}$ is a Markov chain with the following
transition rules:

\begin{itemize}

\item if $x\ne y$, then the transitions 
\begin{equation*}
\begin{cases}
(x,y)\to(x+1,y) &\text{ with rate } c(x, x+1)\,,\\
(x,y)\to(x-1,y) &\text{ with rate } c(x-1, x)\,,\\
(x,y)\to(x, y+1) &\text{ with rate } c(y, y+1)\,,\\
(x,y)\to(x, y-1) &\text{ with rate } c(y-1, y)\,,
\end{cases}
\end{equation*}
 provided
the two coordinates stay between $1$ and $N$;

\item if $x=y$, then the transitions 
\begin{equation*}
\begin{cases}
(x,y)\to(x+1,y+1) & \text{ with rate } c(x, x+1)\,,\\
(x,y)\to(x-1, y-1) & \text{ with rate } c(x-1, x)\,,\\
\end{cases}
\end{equation*}
provided the two coordinates stay
between $1$ and $N$.
\end{itemize}
Any other transition is with rate $0$. 
Let $\fL$ denote the corresponding generator for this Markov chain above. 
In particular, once $X^i_t$
and $Y^i_t$ have merged, they stay together from then  on. 
Recalling that $\cL_{N, 1} g_i=-\gl_i g_i$,  $g_i(0)=g_i(1)$ and $g_i(N)=g_i(N+1)$, for $0 \le i <N$ we define
\begin{equation}
\tilde g_i \; \colonequals \; g_i \left(\langle g_i, g_i\rangle_{\mu_{N,1}}\right)^{-1/2}\,,
\end{equation}
where $\mu_{N, 1}$ is the uniform probability measure on $\lint 1, N \rint$, so that $\langle \tilde g_i, \tilde g_i \rangle_{\mu_{N, 1}}=1$.
Moreover,
for $0 \le i<j \le N-1$ we define
\begin{equation}\label{def:uij}
u_{i, j}(x, y) \;\colonequals\; \tilde g_i(x) \tilde g_j(y)- \tilde g_i(y) \tilde g_j(x)\,.
\end{equation}

\begin{lemma}\label{lema:appeigfunsq}
We have that $(u_{i, j})_{0 \le i<j \le N-1}$ is a base of eigenfunctions of the generator $\fL$.
\end{lemma}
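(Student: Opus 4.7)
The plan is to verify the eigenfunction identity $\fL u_{i,j} = -(\gl_i + \gl_j) u_{i,j}$ directly from the transition rules of $(X^i_t, Y^i_t)$ listed just above the lemma, and then to check that the family $(u_{i,j})_{0 \le i < j \le N-1}$ spans the antisymmetric subspace of $\bbR^{\lint 1, N \rint^2}$, which is the natural $\fL$-invariant subspace containing each $u_{i,j}$. The computation splits into the two cases $x \ne y$ and $x = y$ that correspond to the two sets of transition rules given in the definition of $\fL$.

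For $x \ne y$ the listed rates show that the $x$- and $y$-coordinates evolve as independent copies of the single-particle walk generated by $\cL_{N,1}$, even when $x \pm 1 = y$ (such a transition simply lands on the diagonal and is already accounted for in the independent-walk description). Therefore $\fL$ acts as $\cL_{N,1}^{(x)} + \cL_{N,1}^{(y)}$, where the superscript indicates the variable on which $\cL_{N,1}$ acts while the other coordinate is held fixed. Writing $u_{i, j} = \tilde g_i \otimes \tilde g_j - \tilde g_j \otimes \tilde g_i$ and applying $\cL_{N,1} \tilde g_i = -\gl_i \tilde g_i$ coordinatewise, the four resulting terms combine to
\[
\fL u_{i,j}(x, y) \;=\; -(\gl_i + \gl_j)\bigl[\tilde g_i(x) \tilde g_j(y) - \tilde g_i(y) \tilde g_j(x)\bigr] \;=\; -(\gl_i + \gl_j)\, u_{i,j}(x, y).
\]
For $x = y$, all transitions of $\fL$ keep the chain on the diagonal, where $u_{i,j}$ vanishes identically by antisymmetry; both sides of the eigenequation are therefore zero.

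For the basis property, every $u_{i, j}$ is antisymmetric under the swap $(x, y) \leftrightarrow (y, x)$ and vanishes on the diagonal, hence lies in the $\binom{N}{2}$-dimensional subspace $V$ of antisymmetric functions on $\lint 1, N \rint^2$. Since there are exactly $\binom{N}{2}$ pairs $\{0 \le i < j \le N - 1\}$, it suffices to verify linear independence, which follows from the orthonormality of the tensor products $(\tilde g_i \otimes \tilde g_j)_{0 \le i, j \le N-1}$ in $L^2(\mu_{N,1} \otimes \mu_{N,1})$: any nontrivial relation $\sum_{i < j} a_{i,j} u_{i,j} = 0$ would produce a nontrivial relation among distinct $\tilde g_i \otimes \tilde g_j$ with $i \ne j$. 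The only mild bookkeeping issue is simultaneously handling the reflecting boundary at $\{1, N\}$ and the diagonal feed-in in Step~1, but both are absorbed painlessly: $\cL_{N, 1}$ already encodes the reflecting rates and any transition that happens to land on the diagonal only ever produces $u_{i, j}(y, y) = 0$, which is consistent with the eigenequation.
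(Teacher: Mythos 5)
Your proof is correct and follows essentially the same route as the paper: verify $\fL u_{i,j}=-(\gl_i+\gl_j)u_{i,j}$ using the product structure of the off-diagonal generator and the vanishing of $u_{i,j}$ on the diagonal, then conclude by a dimension count of $N(N-1)/2$. The only cosmetic difference is that the paper establishes linear independence by computing the inner products $\langle u_{i,j},u_{m,n}\rangle$ explicitly to exhibit orthogonality, whereas you deduce it from the orthonormality of the tensor products $\tilde g_i\otimes\tilde g_j$; both rest on the same fact.
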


\begin{proof}
Note that $(X_t^i, Y_t^i)_{t \ge 0}$ either lives in the upper triangle or the lower triangle of the $N$ by $N$ square, and once they hit the diagonal line they move only in the diagonal line since then. Without loss of generality, we assume $(X_t^i, Y_t^i)_{t \ge 0}$ stays in the lower triangle.

We now show that 
\begin{equation}\label{pos:einfun}
\fL u_{i, j}\;=\;-(\gl_i+\gl_j) u_{i, j}\,.
\end{equation}
For $x=y$, $u_{i, j}(x, x)=0$ and thus \eqref{pos:einfun} holds for the diagonal points. 
We now treat the case $x \neq y$: 
\begin{equation}\label{def:genfL}
\begin{aligned}
(\fL u_{i, j})(x, y)
\;&=\;
c(x, x+1)\left[ u_{i, j}(x+1, y)-u_{i, j}(x, y)\right]\\
&+\;
c(x-1, x)\left[ u_{i, j}(x-1, y)-u_{i, j}(x, y)\right]\\
&+\;
c(y, y+1)\left[ u_{i, j}(x, y+1)-u_{i, j}(x, y)\right]\\
&+\;
c(y-1, y)\left[ u_{i, j}(x, y-1)-u_{i, j}(x, y)\right]\,.
\end{aligned}
\end{equation}
Plugging the definition of $u_{i, j}$ in \eqref{def:genfL}  and  using the fact that $(\tilde g_i)_{0 \le i \le N-1}$ are eigenfunctions of $\cL_{N, 1}$, we obtain \eqref{pos:einfun}.

Now we move to show that $(u_{i, j})_{0 \le i<j \le N-1}$ are orthogonal under the usual inner product
$\langle f, g \rangle \colonequals \sum_{1\le x \le y \le N} f((x, y)) g((x, y))$ where $f, g: \lint 1, N \rint^2 \mapsto \bbR$. 
 Observe that 
\begin{equation*}
u_{i, j}(x, y)=-u_{i, j}(y, x)\,,\;\; u_{i, j}(x, x)=0
\end{equation*} 
  and then for $0 \le i<j <N$ and $0 \le m<n <N$,
\begin{equation}\label{norma:uij}
\begin{aligned}
2 \langle u_{i, j}, u_{m, n} \rangle
\;&=\;
\sum_{1 \le x, y \le N} u_{i, j}(x, y) u_{m, n}(x, y)\\
\;&=\;
\sum_{1 \le x, y \le N}\left[ \tilde g_i(x) \tilde g_j(y)-\tilde g_i(y) \tilde g_j(x) \right] \cdot 
\left[\tilde g_m(x) \tilde g_n(y)-\tilde g_m (y) \tilde g_n (x) \right]\\
\;&=\;
\sum_{1 \le x, y \le N} \tilde g_i(x) \tilde g_j(y) \tilde g_m(x) \tilde g_n(y)-\tilde g_i(x) \tilde g_j(y) \tilde g_m(y) \tilde g_n(x)\\
&+\sum_{1 \le x, y \le N}
-\tilde g_i(y) \tilde g_j(x) \tilde g_m(x)\tilde g_n(y)+\tilde g_i(y) \tilde g_j(x) \tilde g_m(y) \tilde g_n(x)\\
\;&=\;
\sum_{1 \le x \le N} \tilde  g_i(x) \tilde g_m(x) \sum_{1 \le y \le N}  \tilde g_j(y) \tilde g_n(y)-\sum_{1 \le x \le N} \tilde g_i(x) \tilde g_n(x) \sum_{1 \le y \le N}  \tilde g_j(y) \tilde g_m(y)\\
&-\sum_{1 \le x \le N} \tilde g_j(x) \tilde g_m(x) \sum_{1 \le y \le N} \tilde g_i(y) \tilde g_n(y)+\sum_{1 \le x \le N} \tilde g_j(x) \tilde g_n(x) \sum_{1 \le y \le N}  \tilde g_i(y) \tilde g_m(y)\\
\;&=\;
N^2\ind_{\{ i=m, j=n\}}- N^2\ind_{\{ i=n, j=m\}}-N^2\ind_{\{ i=n, j=m\}}+N^2\ind_{\{ i=m, j=n\}}\\
\;&=\;2N^2 \left( \ind_{\{ i=m, j=n\}}- \ind_{\{ i=n, j=m\}}\right)\,.
\end{aligned}
\end{equation}
Note that in the second indicator above if $j=m$,
the condition $i<j=m<n$ guarantees that $\{ i=n, j=m\}$ does not hold. Therefore $(u_{i, j})_{0 \le i<j \le N-1}$ are orthogonal.
Since the dimension of $\fL$ (viewing the diagonal line as cemetery) is $N(N-1)/2$ and the dimension of vector space spanned by $(u_{i, j})_{0 \le i<j \le N-1}$ is also $N(N-1)/2$, then $(u_{i, j})_{0 \le i<j \le N-1}$ is a base of eigenfunctions of $\fL$.
\end{proof}

With Lemma \ref{lema:appeigfunsq} at hand, we are ready to prove Proposition \ref{prop:kpartdiseig}.

\begin{proof}[Proof of Proposition \ref{prop:kpartdiseig}]
In fact,  \eqref{tvdistance:coupeig} follows from  a union bound and the following inequality: for all $t \ge t_{\delta/2}$

\begin{equation}
\bP_{x_0, y_0} \left[ X_t^i \neq Y_t^i \right]\;\le\;   2^{14}K_0^2 e^{-\gl_1 t}  \,.
\end{equation}

Without loss of generality, we assume $(X_0^i, Y_0^i)=(x_0, y_0)$ with $x_0<y_0$, and then 
\begin{equation}\label{prob:nomergediag}
\begin{aligned}
\bP_{x_0, y_0} \left[ X_t^i \neq Y_t^i \right]\;&=\;
\sum_{1 \le x<y \le N} e^{t \fL}\left((x_0, y_0), (x, y)\right)\\
&=\;
\delta_{(x_0, y_0)} e^{t \fL} \ind_{\{1\le x<y \le N\}}\,,
\end{aligned}
\end{equation}
where $\delta_{(x_0, y_0)}$ is a  row vector of size $N(N+1)/2$ and $\ind_{1\le x<y \le N}$ is a  column vector of size $N(N+1)/2$.  
Furthermore,  we write
\begin{equation*}
\ind_{\{ 1\le x< y \le N\}}\;=\; \sum_{0\le i<j \le N-1} a_{i, j} u_{i, j}
\end{equation*}
where
\begin{equation*}
a_{i, j} \;\colonequals\; \frac{\langle \ind_{1\le x<y \le N}, u_{i, j} \rangle}{\langle  u_{i, j}, u_{i, j} \rangle}\,.
\end{equation*}
Then, by \eqref{pos:einfun}  we have
\begin{equation}
\begin{aligned}
e^{t \fL} \ind_{\{1 \le x<y \le N \}}
\;&=\;
\sum_{0 \le i<j \le N-1} a_{i, j} e^{t \fL} u_{i, j}\\
\;&=\;\sum_{0 \le i<j \le N-1} a_{i, j}  e^{-(\gl_i+\gl_j)t} u_{i, j}\,.
\end{aligned}
\end{equation}
Then the r.h.s.\ in \eqref{prob:nomergediag} is equal to 
\begin{equation}\label{upbd:probnomerg}
\sum_{0\le i<j \le N-1 } a_{i, j} \delta_{(x_0, y_0)}e^{-(\gl_i+\gl_j)t} u_{i, j}
\;=\;
\sum_{0\le i<j \le N-1 } a_{i, j} e^{-(\gl_i+\gl_j)t} u_{i, j}(x_0, y_0)\,.
\end{equation}
By Cauchy-Schwarz inequality and \eqref{norma:uij} we have
\begin{equation*}
\vert a_{i, j}  \vert
\;\le \; \frac{\langle \ind_{\{ 1 \le x<y \le N\}}, \ind_{\{ 1 \le x<y \le N\}} \rangle^{1/2} }{\langle u_{i, j}, u_{i, j} \rangle^{1/2}}
\;=\;
\frac{1}{N} \sqrt{\frac{N(N-1)}{2}} \;\le\; 1\,.
\end{equation*}
Then the r.h.s.\ in \eqref{upbd:probnomerg}
can be bounded from above by 
\begin{equation}\label{prob:nodiaghit}
\sum_{0\le i<j \le N-1 }  e^{-(\gl_i+\gl_j)t} \vert  u_{i, j}(x_0, y_0)\vert\,.
\end{equation} 
In order to provide upper bound on $\vert u_{i, j}(x_0, y_0) \vert$, 
  we head to provide an upper bound on $\max_{1 \le x \le N}\vert \tilde g_i(x) \vert$ with $1 \le i \le K_0$,  based on  \eqref{eigfun:shape}. 
Note that for all $1 \le i \le K_0$, 
\begin{equation*}
\# \cG_{i}\; \colonequals\;
\# \left\{j \in \lint 1, N\rint: \left\vert \cos \left(\frac{i \pi (j-1/2)}{N} \right) \right\vert \ge \frac{1}{4} \right\}\; \ge \; \frac{1}{16} N\,.
\end{equation*}
Then by  \eqref{eigfun:shape} and for $x \in \cG_i$, we have
\begin{equation*}
\left\vert \frac{\tilde g_i(x)}{\tilde g_i(1)} \cos \left(\frac{i \pi/2}{N} \right) \right \vert \;\ge\; \frac{1}{8}\,,
\end{equation*}
and then
\begin{equation*}
1
\;=\; \langle \tilde g_i, \tilde g_i \rangle_{\mu_{N,1}} 
\;\ge \; \frac{1}{N}
\sum_{x \in \cG_i}  \tilde g_i(x)^2
\;\ge\; \frac{1}{N} 
\sum_{x \in \cG_i}  \frac{\tilde g_i(1)^2}{2^{6} \cos^2\left( \frac{i \pi}{2N}\right)} \;\ge\; \frac{1}{2^{10}} \frac{\tilde  g_i(1)^2}{\cos^2 \left( \frac{i \pi}{2N} \right)}\,,
\end{equation*} 
 which implies $\vert \tilde g_i(1)\vert \le 2^5 \vert \cos \left( \frac{i \pi}{2 N} \right) \vert. $ Then by \eqref{eigfun:shape}, we obtain
 \begin{equation}
 \max_{1 \le x \le N} \vert \tilde g_i(x)\vert \;\le\;  2 \vert \tilde g_i(1) \vert \frac{1}{ \vert \cos \left(\frac{i \pi}{2N} \right)\vert } \;\le\; 2^6\,.
 \end{equation}
Thus for $0 \le i< j \le K_0$,  by \eqref{def:uij}
we have
\begin{equation}\label{upbd:uijsmalli}
\max_{1\le x< y \le N} \vert u_{i, j}(x, y) \vert \;\le\; 2^{13}\,.
\end{equation}

Now for all $i \in \lint 1, N \rint$, using
\begin{equation*}
1\;=\; \langle \tilde g_i, \tilde g_i \rangle_{\mu_{N, 1}} \;\ge\; \tilde g_i(x)^2/N\,,
\end{equation*} 
 we obtain $\max_{1\le x \le N} \vert \tilde g_i(x)\vert \le \sqrt{N}$, and then by \eqref{def:uij}
 \begin{equation}\label{upbd:uijbigi}
 \max_{1 \le x <y \le N} \vert u_{i, j}(x, y) \vert \;\le\; 2N\,. 
 \end{equation}

Now we are ready to provide an upper bound on the r.h.s.\ of \eqref{prob:nodiaghit} by using \eqref{upbd:uijsmalli} and \eqref{upbd:uijbigi}: 
\begin{equation}
\begin{aligned}
&\sum_{0\le i<j \le K_0 }  e^{-(\gl_i+\gl_j)t} \vert  u_{i, j}(x_0, y_0)\vert 
+ \sum_{0\le i<j \le N-1,\,j >K_0 }  e^{-(\gl_i+\gl_j)t} \vert  u_{i, j}(x_0, y_0)\vert \\
&\le\;
\sum_{0\le i<j \le K_0 } 2^{13} e^{-(\gl_i+\gl_j)t} +
 \sum_{0\le i<j \le N-1,\,j >K_0 } 2N e^{-(\gl_i+\gl_j)t}\\
 &\le\; 2^{13}K_0^2 e^{-\gl_1 t}+N^3 e^{-\gl_{K_0} t}\\
 &\le\;
 2^{14}K_0^2 e^{-\gl_1 t}
\end{aligned}
\end{equation}
 where   we have  \eqref{eigv:distinct} in the second last inequality, and have
 used 
 \eqref{eigfun:shape} for the estimate on $\gl_{K_0}$ 
  and $t \ge t_{\delta/2}$  with 
 \begin{equation*}
 K_0\; \ge \; \left\lceil 2\sqrt{\frac{3}{\varrho}+1} \right\rceil\,.
 \end{equation*}
 
\end{proof}

By Theorem \ref{th:gapshapeder} and Theorem \ref{th:lapaceeigenshape}, we have
\begin{equation}\label{2paramequals}
\lim_{N \to \infty} N^2\left\vert \gl_1-\gk_1 \right\vert\;=\;0\,.
\end{equation}
By Lemma \ref{lema:upbdheight} and \eqref{2paramequals}, we have the condition in \cite[Lemma 8.4]{lacoin2016mixing}: for all $t \ge t_{\delta/2}$ and all $N$ sufficiently large,
\begin{equation}\label{cond:lema84Lac}
\sum_{i=1}^{\bar K}  \bE \left[ h_t^\wedge(x_i) \right]
\;\le\;
 2^6  \bar \Upsilon_N^{-1/2}\bar K (K_0+1)\cdot k e^{-\gk_1 t}
 \;\le\;
 \bar K  \sqrt{k}  \delta\,,
\end{equation}
where we have used the assumption on $\bar \Upsilon_N$ in Assumption \ref{assum:2} and Assumption \ref{assum:nparticles} in the last inequality. Then by \eqref{cond:lema84Lac} and \cite[Lemma 8.4]{lacoin2016mixing}, we have \cite[Proposition 8.3]{lacoin2016mixing}. Following the proof in \cite[Proposition 8.2]{lacoin2016mixing} with Theorem \ref{th:gapshapeder} and Proposition \ref{prop:kpartdiseig}, we conclude the proof of Proposition \ref{prop:tdmaxconf}.
\qed

\bibliographystyle{alpha}
\bibliography{library.bib}

\end{document}